 \definecolor{Ftitle}{RGB}{11,46,108}
\definecolor{line}{RGB}{87,39,117}
\colorlet{tableheadcolor}{Ftitle!25} 
\colorlet{tablerowcolor}{gray!10} 
\newtheorem{theorem}{Theorem}
\newtheorem{corollary}[theorem]{Corollary}
\newtheorem{conjecture}[theorem]{Conjecture}
\newtheorem{proposition}[theorem]{Proposition}
\newtheorem{definition}[theorem]{Definition}
\newtheorem{example}[theorem]{Example}
\newtheorem{remark}[theorem]{Remark}
\definecolor{MyBlue}{HTML}{210cac}
\definecolor{MyCiteColor}{HTML}{0099FF}
\definecolor{bl1}{HTML}{4479A1}
\definecolor{pur1}{HTML}{52196D}
\definecolor{mag1}{HTML}{2AD0F1}
\definecolor{MyRed}{HTML}{3E186A}
\newcommand{\bC}{\mathbb C}
\newcommand{\bP}{\mathbb P}
\newcommand{\bQ}{\mathbb Q}
\newcommand{\bZ}{\mathbb Z}
\newcommand{\cH}{\mathcal H}
\newcommand{\V}{V}
\newcommand{\Vsing}{V_{\text{sing}}}
\newcommand{\Vreg}{V_{\text{reg}}}
\newcommand{\mldeg}{\mathrm{mldeg}}
\newcommand{\EAf}{E_A(f)}
\newcommand{\LocusEAf}{\Sigma_A}
\DeclareMathOperator{\conv}{conv}
\begin{document}

\title{The Maximum Likelihood Degree of Toric Varieties}

\title[The ML Degree of Toric Varieties]{The Maximum Likelihood Degree of Toric Varieties}

\author[Likelihood Geometry Group]{Carlos Am\'endola, Nathan Bliss, Isaac Burke, Courtney R. Gibbons, Martin Helmer, Serkan Ho\c{s}ten, Evan D. Nash, Jose Israel Rodriguez, Daniel Smolkin}
\thanks{This project was started during the AMS Mathematics Research Communities 2016 program in Algebraic Statistics held in Snowbird, Utah. The authors were part of the Likelihood Geometry group led by Serkan Ho\c{s}ten and Jose Rodriguez. This material is based upon work supported by the National Science Foundation under Grant Number DMS 1321794.}









\begin{abstract}
We study the maximum likelihood (ML) degree of toric varieties, known as discrete exponential models in statistics. 
By introducing scaling coefficients to the monomial parameterization of the toric variety, one can change the ML degree. 
We show that the ML degree is equal to the degree of the toric variety for generic scalings, while it drops if and only if the scaling vector is in the locus of the principal $A$-determinant.
We also illustrate how to compute the ML estimate of a toric variety numerically via homotopy continuation from a scaled toric variety with low ML degree.
Throughout, we include examples motivated by algebraic geometry and statistics. We compute the ML degree of rational normal scrolls  and a large class of Veronese-type varieties. In addition, we investigate the ML degree
of scaled Segre varieties, hierarchical log-linear models, and graphical models.
\end{abstract}


\maketitle

\section{Introduction}

Maximum likelihood estimation is a fundamental optimization
problem in statistical inference. The maximum likelihood degree of an algebraic statistical model was introduced in \cite{CHKS06} and \cite {HKS05} to 
study the geometry and complexity of this optimization problem. Here we study the maximum likelihood degree of toric varieties. 

Let $\V \subset \bP^{n-1}$ be a projective variety over $\bC$. The homogeneous coordinates of $\bP^{n-1}$ will be denoted by 
$[p_1 \, : \, \cdots \, : \, p_n]$. The affine open subset of $\V$ where $p_1 + \ldots + p_n \neq 0$ is identified with the set of points in $\V$ where
$p_1 + \ldots + p_n = 1$. We can view the points $p \in \V$ satisfying
$$ p_1, \ldots, p_n \geq 0 \mbox{   and  } p_1 + \ldots + p_n =1$$
as a family of probability distributions of a discrete random variable $X$ where 
$$p_i = \mathrm{prob}(X=i) \,\,\,\,   \mbox{for  } i=1, \ldots, n.$$ After observing $X$ in $N$ instances $X_1, \ldots, X_N$, we record the data vector $u = (u_1, \ldots, u_n) \in~\bZ^n$ where 
$u_i$ is the number instances where $X_j=i$, i.e.,  
$$u_i= |\{j \, : \,   X_j=i \}|\text{ and }N=u_1+\cdots+u_n.$$
The likelihood function is the rational function 
$$ \ell_u(p) \, = \, \frac{p_1^{u_1} \cdots p_n^{u_n}}{(p_1 + \cdots + p_n)^{u_1 + \cdots + u_n}},$$
and one seeks to find a probability distribution $\hat{p} = (\hat{p}_1, \ldots, \hat{p}_n)$ in $V$ which maximizes $\ell_u$. Such a probability distribution $\hat{p}$
is a maximum likelihood estimate, and $\hat{p}$ can be identified by computing all critical points of $\ell_u$ on $V$. 

We let $\Vsing$ be the singular locus of $V$ and $\Vreg = V \setminus \Vsing$ be the regular locus. We also define
$\cH$ as the hypersurface that is the union of the coordinate hyperplanes and the hyperplane defined by $p_1 + \cdots + p_n = 0$.
\begin{definition} \label{MLdeg} The \textit{maximum likelihood degree} of $V$, denoted $\mldeg(V)$,  is the number of complex critical points of the likelihood function $\ell_u$ on 
$\Vreg \setminus \cH$ for generic vectors $u$.
\end{definition}
In \cite{CHKS06} and \cite{HKS05}, it was shown that $\mldeg(V)$ is well-defined. We refer the reader to \cite{DSS09, Huh13, HRS14} for a glimpse of the growing body of work on aspects of $\mldeg(V)$. For an excellent recent survey we recommend \cite{HS14}.

In most applications, $V$ is the Zariski closure of the image of a polynomial map, also known as a parametric statistical model. A widely used subclass
of such models is composed of those given by a monomial parametrization. In algebraic geometry, they are known as toric varieties \cite{Ful93, CLS11}, and in probability
theory and statistics they are known as discrete exponential families \cite{BD76}. In particular, hierarchical log-linear models \cite{BFH07} and undirected graphical
models on discrete random variables \cite{Lau96} present examples in contemporary applications. Theorem~\ref{mainthm} is our main result and is restated below.

\begin{theorem}[Main result] \label{thm-main} Let $V^c \subset \bP^{n-1}$ be the projective variety defined by the monomial parametrization
$\psi^c \, : \, (\bC^*)^d \longrightarrow (\bC^*)^n$ where 
$$ \psi^c(s,\theta_{1},\theta_{2},\dots,\theta_{d-1})  \, = \, (c_{1}s\theta^{a_{1}},c_{2}s\theta^{a_{2}},\dots,c_{n}s\theta^{a_{n}}),$$
and $c \in (\bC^*)^n$ is fixed. 
Then $\mldeg(\V^c) < \deg(\V)$ if and only if $c$ is in the principal $A$-determinant of the toric variety $\V = \V^{(1,\ldots,1)}$.
\end{theorem}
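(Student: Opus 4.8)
The plan is to pull the likelihood function back to the parametrizing torus, convert the problem into counting isolated solutions of a Laurent system, and then read off the drop in that count from the combinatorics of the Newton polytope $Q=\conv(a_1,\dots,a_n)$ via Bernstein's theorem and the Gelfand--Kapranov--Zelevinsky factorization of the principal $A$-determinant. First I would compute $\ell_u\circ\psi^c$: writing $f_c(\theta)=\sum_i c_i\theta^{a_i}$, $N=\sum_i u_i$, and $\lambda=\bar A u/N$ (where $\bar A$ is $A$ with its top row of ones deleted), the scaling variable $s$ cancels and the critical equations $\partial_{\theta_j}\log(\ell_u\circ\psi^c)=0$ become
\[
 g_j(\theta)\ :=\ \theta_j\,\partial_{\theta_j}f_c(\theta)-\lambda_j\,f_c(\theta)\ =\ 0,\qquad j=1,\dots,d-1 .
\]
Since $p_i=c_is\theta^{a_i}$ and $\bfOne^{\top}p=s f_c$, any point of $\Vreg\setminus\cH$ has all $p_i\neq 0$ and $\bfOne^{\top}p\neq 0$, hence lies in the dense torus orbit; thus the critical points counted by $\mldeg(\V^c)$ are, for generic $u$, exactly the isolated torus solutions of the system above with $f_c\neq 0$. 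I would also record the two baseline facts (proved earlier in the paper or as short lemmas): the generic root count of this system equals the normalized volume of $Q$, which is $\deg(\V)$, and $\mldeg(\V^c)\le\deg(\V)$ always.

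Next I would isolate the two mechanisms that can make the count fall short of $\deg(\V)$. Mechanism (i): isolated torus solutions escape to the toric boundary, so Bernstein's bound is not attained. Mechanism (ii): a torus solution lands on $\cH=\{f_c=0\}$ and is therefore excluded from $\Vreg\setminus\cH$. For (ii), if $f_c(\theta^\ast)=0$ at a solution then $g_j(\theta^\ast)=\theta^\ast_j\partial_{\theta_j}f_c(\theta^\ast)=0$ forces $\partial_{\theta_j}f_c(\theta^\ast)=0$ for all $j$, so $\theta^\ast$ is a torus singular point of $\{f_c=0\}$; such a point exists precisely when the top-dimensional $A$-discriminant $\Delta_{A}$ vanishes at $c$. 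For (i), I would apply Bernstein's second theorem: the bound $\deg(\V)=\mathrm{MV}(Q,\dots,Q)$ is attained exactly when, for every facet direction $v\neq 0$, the leading-form system $\{(g_j)_v=0\}$ has no solution in $(\bC^\ast)^{d-1}$. For generic $\lambda$ each $g_j$ has Newton polytope $Q$, and its leading form is $(g_j)_v=\theta_j\partial_{\theta_j}(f_c)_v-\lambda_j(f_c)_v$, where $(f_c)_v$ is the truncation of $f_c$ to the face $\Gamma_v$.

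A common torus zero of these leading forms splits into two cases. If $(f_c)_v=0$ there, then $\partial_{\theta_j}(f_c)_v(\theta^\ast)=0$ for all $j$, so $\theta^\ast$ is a torus singular point of the face polynomial $(f_c)_v$, i.e.\ the facial $A$-discriminant $\Delta_{A\cap\Gamma_v}$ vanishes at $c$. If instead $(f_c)_v\neq 0$, then $\theta^\ast$ is a critical point of $\log(f_c)_v-\langle\lambda,\log\theta\rangle$; but all exponents of $(f_c)_v$ lie in the affine span of the proper face $\Gamma_v$, so after a unimodular monomial change of coordinates the logarithmic gradient of $(f_c)_v$ takes fixed integer values in the $\dim Q-\dim\Gamma_v$ directions transverse to $\Gamma_v$, which is impossible for generic $\lambda$. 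Hence a drop through mechanism (i) occurs iff some proper facial discriminant $\Delta_{A\cap\Gamma_v}$ vanishes at $c$. Combining (i) and (ii) with the factorization $\EAf=\pm\prod_{\Gamma\preceq Q}\Delta_{A\cap\Gamma}^{\,m(\Gamma)}$ over all faces $\Gamma$ of $Q$ (including $Q$ itself, whose discriminant is $\Delta_A$), a drop occurs iff some $\Delta_{A\cap\Gamma}$ vanishes at $c$ iff $\EAf$ vanishes at $c$, i.e.\ iff $c\in\LocusEAf$.

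The main obstacle I anticipate is not the facial bookkeeping but the passage from the generic root count to the honest count of critical points in $\Vreg\setminus\cH$ with correct multiplicities: I must verify that for generic $u$ all torus solutions are simple and isolated, that vanishing of one facial discriminant genuinely removes (rather than merely relocates) an admissible solution, and that the homogenized configuration $A=\{(1,a_i)\}$ has the same faces and facial discriminants as $Q$, so that $\EAf$ is exactly the object in the statement. A cleaner but heavier fallback, if the multiplicity analysis becomes delicate, is to identify the count with the signed topological Euler characteristic of $(\bC^\ast)^{d-1}\setminus\{f_c=0\}$ and invoke the theorem that this equals the normalized volume of $Q$ precisely when $f_c$ is nondegenerate with respect to its Newton polytope --- a condition which is literally $\EAf(c)\neq 0$.
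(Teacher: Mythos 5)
Your overall architecture (pull back to the torus, split the drop into boundary escape versus landing on $\{f_c=0\}$, then match each mechanism with a face of $Q$) is a genuinely different route from the paper's proof, which never leaves $\bP^{n-1}$: there the critical equations are the \emph{linear} system $(Au)_i L_f(p) = u_+ L_i(p)$ on $V$, the counting is done by refined Bezout, and the facial analysis is a rank/genericity computation on that linear system rather than an appeal to Bernstein. However, your proof has a genuine gap at the step where you convert your two mechanisms into vanishing of $\EAf$: you identify ``the face polynomial $(f_c)_\Gamma$ has a torus zero at which all its logarithmic derivatives vanish'' with ``$\Delta_{A\cap\Gamma}(c)=0$'', and this identification fails in both directions as stated. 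First, $\nabla_{A\cap\Gamma}$ is by definition a Zariski \emph{closure}, so $\Delta_{A\cap\Gamma}(c)=0$ only says $c$ is a limit of coefficient vectors with degenerate face polynomials; it does not produce an actual degenerate point for your particular $c$, which is exactly what you need in the forward direction ($c\in\LocusEAf\Rightarrow$ drop) to exhibit an excluded or escaped solution. Second, when $\nabla_{A\cap\Gamma}$ has codimension at least two the convention is $\Delta_{A\cap\Gamma}=1$, so a face can be degenerate while no facial discriminant vanishes; then the backward direction (drop $\Rightarrow c\in\LocusEAf$) cannot be closed through the factorization $\EAf=\pm\prod_\Gamma\Delta_{A\cap\Gamma}^{m(\Gamma)}$ either.

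The fix is to bypass discriminants entirely and use the definition of $\EAf$ as the $A$-resultant $R_A\left(f,\theta_1\frac{\partial f}{\partial\theta_1},\dots,\theta_{d-1}\frac{\partial f}{\partial\theta_{d-1}}\right)$: by definition it vanishes at $c$ if and only if the linear forms $L_f, L_1,\dots,L_{d-1}$ have a common zero on the \emph{compact} toric variety $V$, and decomposing $V$ into torus orbits, one for each face $\Gamma$ of $Q$ including $Q$ itself, shows this happens exactly---no closures, no codimension hypotheses---when some face polynomial $(f_c)_\Gamma$ has a torus zero killing all its logarithmic derivatives. That exact equivalence is what welds your mechanisms (i) and (ii) to $\LocusEAf$, and it is precisely the route the paper takes (your generic-$\lambda$ affine-span argument on faces is the torus-side twin of the paper's rank argument on the restricted linear system). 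Two further points you flagged but did not resolve also need real arguments: (a) your claim that the generic torus root count equals $\deg(\V)$ presumes the homogenized matrix spans $\bZ^d$ over $\bZ$; otherwise the torus count is the lattice index times $\deg(\V)$ and must be carried through; and (b) when the torus singular locus of $f_c$ is positive dimensional, Bernstein's dichotomy ``infinitely many solutions or fewer than the mixed volume'' does not by itself bound the number of \emph{isolated} solutions off $\{f_c=0\}$ by $\deg(\V)-1$; you need a refined intersection-theoretic bound on the toric compactification, which is exactly the role played by \cite[Proposition 8.4]{Ful98} in the paper's Corollary~\ref{cor:mldegIneq} and Theorem~\ref{mainthm}.
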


To set the stage, we start with an example chosen from the theory of graphical models. This is the smallest example of a graphical model that is not decomposable. Models that are decomposable have a unique maximum likelihood estimate that is a rational function of the data vector $u$. This is equivalent to $\mldeg(V) = 1$ \cite{GMS06}. 

\begin{example}[Binary $4$-Cycle] \label{ex:binary-4-cycle}
This example is from a study to determine risk factors for coronary heart disease based on data collected in Czechoslovakia \cite{R81, EH85}. Six different
factors affecting coronary heart disease were recorded for a sample of  $1841$ workers employed in the Czech automotive industry. We will only use the following four factors:
whether they smoked ($S$), whether their systolic blood pressure was less than $140$ mm ($B$), whether there was a family history of coronary heart disease ($H$), and 
whether the ratio of beta to alpha lipoproteins was less than $3$ ($L$). Each is a binary variable, and the joint 
random variable $X = (S, B, H, L)$ has a state space of cardinality $16$. The data set is summarized in Table~\ref{table-auto}.

\begin{figure}
\begin{center}
\centering\hspace{-25mm}
\begin{floatrow}
\ffigbox{%
  \begin{tikzpicture}[scale=.5]
\def \radius {3cm}
\def \n {4}

\node[circle,inner sep=1pt,draw](S) at ({45 + (360/\n)}:\radius) {$S$};

\node[circle,inner sep=1pt,draw](B) at ({45 + (360/\n*2)}:\radius) {$B$};

\node[circle,inner sep=1pt,draw](H) at ({45 + (360/\n*3)}:\radius) {$H$};

\node[circle,inner sep=1pt,draw](L) at ({45 + (360/\n*4)}:\radius) {$L$};

\draw (S) -- (B) -- (H) -- (L) -- (S);
\end{tikzpicture}%
}{%
  \caption{\label{fig:binary-4-cycle} The $4$-cycle.}%
}

\capbtabbox{%
  \begin{tabular}{|ccc|cc|}
\hline
$H$ & $L$ &  $B$ &  $S$: no & $S$: yes \\
\hline
neg &  $< 3$ &  $< 140$ &  297  & 275 \\
       &            &   $\geq 140$ & 231  & 121 \\
       &  $\geq 3$ & $<140$ & 150 & 191 \\
      &            &   $\geq 140$ & 155  & 161 \\
pos &  $< 3$ &  $< 140$ &  36  & 37 \\
       &            &   $\geq 140$ & 34  & 30 \\
       &  $\geq 3$ & $<140$ & 32 & 36 \\
      &            &   $\geq 140$ & 26  & 29 \\
\hline
\end{tabular}
}{%
  \caption{\label{table-auto} Worker data.}%
}
\end{floatrow}
\end{center}
\end{figure}

We will use the graphical model known as the $4$-cycle (Figure~\ref{fig:binary-4-cycle}). If we set 
$$p_{ijk\ell}=\mathrm{prob}(S = i, B=j, H = k, L =\ell) $$
the family of probability distributions for $X$ which factor according
to this model can be described by the following monomial parametrization: let $a_{ij}, b_{jk}, c_{k\ell}, d_{i \ell}$
be parameters for $i,j,k, \ell \in \{0, 1 \}$ and let $p_{ijk\ell} = a_{ij}b_{jk}c_{k\ell} d_{i \ell}$. The Zariski closure $\V$ of the image of this parametrization is a toric variety. Every probability distribution 
for $X$ that lies on this toric variety satisfies certain independence statements which can be read from the graph. For instance, $S$ and $H$
are independent given $B$ and $L$. Similarly, $B$ and $L$ are independent
given $S$ and $H$. 
The maximum likelihood estimate given by the worker data is
\begin{align*}{\hat p} = (\, &0.15293342, \, 0.089760679, \, 0.021266977, \, 0.015778191,\\
&0.12976986, \, 0.076165372, \, 0.020853199, \, 0.015471205, \\
&0.13533793, \, 0.11789409, \, 0.018820142, \,  0.0207235, \\
&0.083859917, \, 0.073051125, \, 0.01347576, \, 0.014838619 \, ).
\end{align*}The degree of $\V$ is $64$ and $\mldeg(\V) = 13$. This was first computed in \cite[p.~1484]{GMS06} where the question of explaining the fact that $\mldeg(\V) = 13$
was raised.  
\end{example}

The  article is organized as follows. 
In Section~\ref{sec:prelims} we define scaled toric varieties, introduce maximum likelihood degrees, and recall discriminants. 
In Section~\ref{sec:principal} we prove our main theorem (Theorem~\ref{mainthm}). 
In Sections~\ref{sec:scroll}-\ref{sec:graphical} 
we compute the ML degree of rational normal scrolls  and a large class of Veronese-type varieties. We investigate the ML degree
of scaled Segre varieties, hierarchical log-linear models, and graphical models.
In Section~\ref{sec:homotopy}, we show that maximum likelihood estimates can be tracked between different scalings of a toric variety via homotopy continuation. We illustrate with computational experiments that homotopy continuation can be a better numerical method
than iterative proportional scaling \cite{DR72}
for computing the maximum likelihood estimate of a discrete exponential
model.

\section{Preliminaries}\label{sec:prelims}
In this section we introduce scaled toric varieties and their likelihood equations. We show that the ML degree of a scaled toric variety is bounded by the degree of the variety. Next, we present two classical objects in toric geometry: the $A$-discriminant and the principal $A$-determinant \cite{GKZ94}. 
The principal $A$-determinant plays a crucial role in our main result Theorem~\ref{mainthm}.

\subsection{Scaled toric varieties}

\vskip 0.1cm
\noindent In this paper, we study the maximum likelihood degree of projective toric varieties. Let $A$ be a $(d-1) \times n$ matrix with columns $a_1, \ldots, a_n \in \bZ^{d-1}$. The convex hull of the lattice points $a_i$ defines a polytope $Q=\conv(A)$. 
\begin{definition}\label{def:scaledToric}
A toric variety  scaled by $c \in (\bC^*)^n$, denoted $V^c$, is defined by the following map 
$\psi^c \, : \, (\bC^*)^d \longrightarrow (\bC^*)^n$ where $A$ is a full rank $(d-1)\times n$ integer matrix: 
\begin{equation}
\psi^c(s,\theta_{1},\theta_{2},\dots,\theta_{d-1})  \, = \, (c_{1}s\theta^{a_{1}},c_{2}s\theta^{a_{2}},\dots,c_{n}s\theta^{a_{n}}).\label{eq:scaledEmbedding}
\end{equation}Here $V^c$ is the projective variety in $\bP^{n-1}$ corresponding to the affine cone that is the closure of $\psi^c((\bC^*)^d)$ in $\bC^n$.
\end{definition}
Different choices of $c$ give different embeddings of isomorphic varieties $V^c$ in $\mathbb{P}^{n-1}$; their ML degrees may also differ. When $c=(1,1,\dots,1,1)$, $V^c$ is the standard
toric variety of $A$ as in \cite{CLS11}. 
\begin{example} Consider $\psi^c \, : \, (\bC^*)^3 \longrightarrow (\bC^*)^6$ given by 
$$ (s, t, u) \mapsto (s, 2st, st^2, 2su, 2stu, su^2).$$
This gives the Veronese embedding of $\bP^2$ into $\bP^5$ scaled by $c = (1,2,1,2,2,1)$. The usual Veronese embedding
has ML degree four whereas this scaled version has ML degree one.
\end{example}

\subsection{Likelihood equations}
We begin our discussion with the critical equations whose solutions are the critical points of the likelihood function for a toric variety. 
These equations are called likelihood equations. 

Let $A$ be a $(d-1)\times n$ matrix as in Definition~\ref{def:scaledToric}.
We let 
$$f  =
\sum_{i=1}^n c_{i}\theta^{a_{i}}=c_{1}\theta^{a_{1}}+\cdots+ c_{n}\theta^{a_{n}}.$$

\begin{definition} \label{MLequations}
Let $u =\left(u_{1},\dots,u_{n}\right)$ and $u_+ = \sum_i u_i$. Using the method of Lagrange multipliers, we obtain the likelihood equations
for the variety $V^c$:
\[
1=sf
\]
\[
\left(Au\right)_{i}=u_{+}s\theta_{i}\frac{\partial f}{\partial\theta_{i}}\text{ for }i = 1, \ldots, d-1.
\]
\noindent
In other words,
\begin{equation}
\begin{array}{ccc} \label{likeqn1}
1 & = & sf\\[.5em]
\left(Au\right)_{1} & = & u_{+}s\theta_{1}\frac{\partial f}{\partial\theta_{1}}\\[.5em]
\left(Au\right)_{2} & = & u_{+}s\theta_{2}\frac{\partial f}{\partial\theta_{2}}\\
 & \vdots\\
\left(Au\right)_{d-1} & = & u_{+}s\theta_{d-1}\frac{\partial f}{\partial\theta_{d-1}}
\end{array}
\end{equation}
\end{definition}

\vskip 0.3cm
\noindent We remark that the solutions to the above system are exactly the solutions to 
\begin{equation}
\begin{array}{ccc} \label{likeqn2}
(Au)_1 f  & = & u_{+} \theta_{1}\frac{\partial f}{\partial\theta_{1}}\\[.5em]
(Au)_2 f & = & u_{+} \theta_{2}\frac{\partial f}{\partial\theta_{2}}\\
 & \vdots\\
(Au)_{d-1} f & = & u_{+}\theta_{d-1}\frac{\partial f}{\partial\theta_{d-1}}
\end{array}
\end{equation}
where $f \neq 0$.

\vskip 0.3cm
\noindent We formulate the ML degree of the scaled toric variety $V^c$ using the coordinates $p_1, \ldots, p_n$ as well. Note that 
$f = c_1\theta^{a_1} + \cdots + c_n \theta^{a_n}$ induces a linear form $L_f(p) = c_1p_1 + \cdots + c_np_n$
on the toric variety $V$. Similarly, $\theta_i\frac{\partial f}{\partial \theta_i}$ induce corresponding linear forms $L_i(p)$ 
on $V$ for $i=1, \ldots, d-1$, as the monomials appearing in $\theta_i \frac{\partial f}{\partial \theta_i}$ are a subset of the monomials appearing in $f$. With this we immediately get the following.
\begin{proposition} The ML degree of $V^c$ is the number of solutions $p \in V \setminus V(p_1\cdots p_n L_f(p))$
to 
\begin{equation} \label{eqlik3}
\begin{array}{ccc}
(Au)_1 L_f(p)  & = & u_{+} L_1(p)\\
(Au)_2 L_f(p) & = & u_{+} L_2(p)\\
 & \vdots\\
(Au)_{d-1} L_f(p) & = & u_{+} L_{d-1}(p)
\end{array}
\end{equation}
for generic vectors $u$.
\label{prop:eqlik3}
\end{proposition}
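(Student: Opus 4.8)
The plan is to reduce the intrinsically-defined quantity $\mldeg(V^c)$ to a solution count on the \emph{standard} toric variety $V=V^{(1,\dots,1)}$, on which the forms $L_f,L_1,\dots,L_{d-1}$ and the system \eqref{eqlik3} are defined. The bridge is the linear automorphism $\Phi\colon\bP^{n-1}\to\bP^{n-1}$, $\Phi(p_1,\dots,p_n)=(c_1p_1,\dots,c_np_n)$, which restricts to an isomorphism $V\to V^c$ carrying the parametrization $p_i=s\theta^{a_i}$ of $V$ to the parametrization $\psi^c$ of $V^c$. Since $\sum_i c_ip_i=L_f(p)$, the pullback of the likelihood function is $\ell_u\circ\Phi=\bigl(\prod_i c_i^{u_i}\bigr)\,\prod_i p_i^{u_i}/L_f(p)^{u_+}$, and the leading constant is irrelevant to critical points; so it is enough to count the critical points on $V$ of $g(p):=\prod_i p_i^{u_i}/L_f(p)^{u_+}$.

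First I would fix the domain. Because $\cH$ contains every coordinate hyperplane, $V^c\setminus\cH$ consists of the points of $V^c$ with all coordinates nonzero, that is $V^c\cap(\bC^*)^n$, which is the dense torus orbit and hence smooth; thus $V^c_{\text{reg}}\setminus\cH=V^c\setminus\cH$ and every critical point counted by $\mldeg(V^c)$ lies in the open torus. Under $\Phi$ the excluded set $\cH$ pulls back exactly to $V(p_1\cdots p_nL_f(p))$, matching the locus removed in the statement.

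Next I would compute the critical equations of $g$ on the torus via $p_i=s\theta^{a_i}$. Passing to $\log g$, the scaling parameter $s$ cancels and one is left with $\log g=\mathrm{const}+\sum_j (Au)_j\log\theta_j-u_+\log f$, whose critical equations are precisely \eqref{likeqn2}, namely $(Au)_jf=u_+\theta_j\,\partial f/\partial\theta_j$. The identities $L_f(p)=sf$ and $L_j(p)=s\,\theta_j\,\partial f/\partial\theta_j$, valid on $V$, convert these into $(Au)_jL_f(p)=u_+L_j(p)$; the factor $s$ cancels, so \eqref{eqlik3} is intrinsic to $p\in V$ and independent of the chosen preimage $(s,\theta)$.

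The step I expect to be the main obstacle is upgrading this parameter-space calculation to an honest point-for-point correspondence on the variety, so that any non-injectivity of the monomial map cannot distort the count. For this I would use that the parametrization is \'etale onto the dense torus orbit, hence an analytic local isomorphism, so $g|_V$ is critical at a point $p_0$ if and only if $g$ composed with the parametrization is critical at some (equivalently any) preimage $\theta_0$; by the previous paragraph this happens if and only if $p_0$ solves \eqref{eqlik3}. Because $L_f$ and the $L_j$ are genuine linear forms in $p$, the system \eqref{eqlik3} depends only on $p_0$, so the equivalence does not depend on the preimage and distinct preimages of one $p_0$ do not cause overcounting. Assembling the pieces, for every $u$ the critical points of $\ell_u$ on $V^c_{\text{reg}}\setminus\cH$ correspond bijectively to the solutions of \eqref{eqlik3} on $V\setminus V(p_1\cdots p_nL_f(p))$; specializing to generic $u$ yields the asserted equality.
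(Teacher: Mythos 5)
Your proposal is correct and takes essentially the same route as the paper: both derive the critical equations \eqref{likeqn2} in the torus parameters (the paper via Lagrange multipliers in Definition~\ref{MLequations}, you via logarithmic derivatives of the pulled-back likelihood) and then rewrite them as the linear system \eqref{eqlik3} on $V$, with the fact that toric singularities lie in the coordinate hyperplanes handling the passage from $V_{\text{reg}}$ to $V$. The only difference is expository: you make explicit the scaling isomorphism $p\mapsto(c_1p_1,\dots,c_np_n)$, the smoothness of the dense torus orbit, and the \'etale property of the parametrization, all of which the paper's one-sentence proof leaves implicit.
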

\begin{proof}
Definition~\ref{MLdeg} combined with \eqref{likeqn2} give the result by noting the fact that the singularities of any toric variety are contained in the union of coordinate hyperplanes.
\end{proof}
The following result was first proved in \cite[Theorem 3.2]{HS14}.
\begin{corollary} For $c \in (\bC^*)^n$, the ML degree of $V^c$ is at most $\deg(V)$. Moreover, for generic $c$, $\mldeg(V^c) = \deg(V)$.
\label{cor:mldegIneq}
\end{corollary}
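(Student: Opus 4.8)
The plan is to read both claims off the reformulation in Proposition~\ref{prop:eqlik3}. For generic $u$, $\mldeg(V^c)$ is the number of points of $V$ that satisfy the $d-1$ linear equations \eqref{eqlik3} and avoid the bad locus $\mathcal{B}=V\cap V(p_1\cdots p_n L_f(p))$; these equations cut out a linear subspace $\Lambda\subseteq\bP^{n-1}$. First I would check that, for generic $u$, the $d-1$ forms in \eqref{eqlik3} are linearly independent: writing the $i$-th one as $(Au)_i L_f - u_+ L_i$ and using that the $(d\times n)$ matrix obtained by adjoining a row of ones to $A$ has full rank $d$, the coefficient matrix of the system has rank $d-1$ as soon as $u_+\neq 0$. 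Hence $\mathrm{codim}\,\Lambda=d-1=\dim V$, so $V\cap\Lambda$ is finite.

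The upper bound is then immediate from intersection theory: a linear subspace of codimension $\dim V$ meeting an irreducible projective variety $V$ in finitely many points meets it in at most $\deg V$ points (the refined B\'ezout inequality, or directly the definition of degree via generic linear sections). Since $\mldeg(V^c)$ counts only those intersection points lying off $\mathcal B$, we get $\mldeg(V^c)\le\deg V$ for every $c\in(\bC^*)^n$. Equivalently, clearing $s$ as in \eqref{likeqn2} turns the problem into $d-1$ Laurent equations in $\theta\in(\bC^*)^{d-1}$, each with Newton polytope contained in $Q=\conv(A)$; Kushnirenko's theorem bounds the number of isolated torus solutions by the normalized volume of $Q$, which equals $\deg V$, giving the same inequality.

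For the generic equality I would argue that the family of sections $\Lambda=\Lambda(c,u)$ is not trapped in the proper closed ``deficient'' locus of the Grassmannian. A generic linear section of codimension $\dim V$ meets $V$ transversally in exactly $\deg V$ reduced points, all lying in the dense torus of $V$ and hence off $\mathcal B$ (here one uses that $V$ is irreducible and contained in no coordinate hyperplane nor in $\{L_f=0\}$, so $\dim\mathcal B<\dim V$ and a generic section misses it). The conditions ``$\Lambda$ is transverse to $V$'' and ``$V\cap\Lambda\cap\mathcal B=\varnothing$'' are open in $(c,u)$, so it suffices to exhibit a single pair attaining $\deg V$ transverse points; semicontinuity of the fibre cardinality then promotes this to a dense open set of $c$, on whose generic-$u$ slice $\mldeg(V^c)=\deg V$.

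The main obstacle is precisely this attainability, because the sections $\Lambda(c,u)$ are far from arbitrary: the single parameter $c$ rescales every equation simultaneously, so the torus orbit of sections is low-dimensional and Kleiman's transversality theorem does not apply. The resolution I would pursue is the Kushnirenko route of the previous paragraph. The coefficient of $\theta^{a_j}$ in the $i$-th equation is $c_j\big((Au)_i-u_+(a_j)_i\big)$, so the free scalars $c_j$ rescale the monomials \emph{independently}. Because the locus where a system supported on $Q$ fails to attain the Kushnirenko bound (the Bernstein-degenerate locus of the facet subsystems) is closed and invariant under such per-monomial rescaling, and is proper since the bound is attained for fully generic coefficients, generic $c$ together with generic $u$ lands off it and produces exactly $\deg V$ torus solutions. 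Matching these bijectively with the points counted in Proposition~\ref{prop:eqlik3}, and discarding the lower-dimensional contribution of solutions with $f=0$, then yields $\mldeg(V^c)=\deg V$ for generic $c$.
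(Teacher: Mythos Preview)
Your upper-bound argument via B\'ezout is correct and essentially matches the paper's (which phrases it through Bertini and B\'ezout on the linear section of $V$). For the generic-equality half, the paper simply asserts that for generic $c$ the section $\Lambda(c,u)$ is ``a generic linear subspace,'' and you are right to flag this as the delicate point: the family $\{\Lambda(c,u)\}_c$ is only a torus orbit in the Grassmannian, so one cannot literally appeal to a generic point of it.

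However, your proposed Kushnirenko fix has a genuine gap. The Bernstein-degenerate locus is \emph{not} invariant under the per-monomial rescaling $(\gamma_{ij})\mapsto(c_j\gamma_{ij})$. Already for $d{-}1=1$ and $A=(0\ 1\ 2)$, the polynomial $1+2\theta+\theta^2=(1+\theta)^2$ lies in the degenerate locus (one torus root instead of two), while its rescaling by $c=(1,1,2)$, namely $1+2\theta+2\theta^2$, has two simple torus roots and does not. So the step ``closed, proper, and invariant $\Rightarrow$ a generic $(c,u)$ avoids it'' collapses; and even if invariance held, it would only reduce the question to whether the $c=\mathbf{1}$ system with coefficients $B_{ij}(u)$ is already non-degenerate, which you have not argued. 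A cleaner way to justify the paper's claim is to switch viewpoint: setting $q_j=c_jp_j$ sends $\Lambda(c,u)$ to the \emph{fixed} subspace $\Lambda_0(u)=\{B(u)q=0\}$ and replaces $V$ by its torus translate $\operatorname{diag}(c)\cdot V$. Since $(\bC^*)^n$ acts transitively on the dense torus of $\bP^{n-1}$, Kleiman's transversality now applies there and gives a transverse zero-dimensional intersection for generic $c$; that no intersection points escape to the toric boundary for generic $u$ is exactly the overdetermined-linear-system observation that reappears in the paper's proof of Theorem~\ref{mainthm}.
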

\begin{proof} For generic $u$, the linear equations \eqref{eqlik3} define a linear subspace of codimension
$d-1$. By Bertini's theorem, the intersection of this linear subspace with $V$ is generically transverse. Now by Bezout's theorem \cite[Propositon 8.4]{Ful98}, 
the sum of the degrees of the components of the intersection  is equal to the degree of $V$. Therefore, the degree of the zero dimensional piece of this intersection 
is at most $\deg(V)$. The ML degree of $V^c$ is obtained
by removing those solutions in $V(p_1\cdots p_n L_f(p))$, hence $\mldeg(V^c) \leq \deg(V)$. 
For generic $c$,
the linear subspace is a generic linear subspace of codimension $d-1$.
 Not only will the intersection contain
exactly $\deg(V)$ points, these points will not be in $V(p_1 \cdots p_n L_f(p))$. Hence, in this case, $\mldeg(V^c) = \deg(V)$.
\end{proof}

We will denote the set of solutions in $V$ to \eqref{eqlik3}  by $\mathcal{L}_{c,u}'$. Similarly, we will denote the set of solutions to
\eqref{eqlik3} in $\V\setminus V(p_1\cdots p_n L_f(p))$ by $\mathcal{L}_{c,u}$.
To close this subsection, we would like to point out a classical result
both in probability theory and toric geometry. It is sometimes known as Birch's theorem (see e.g. \cite{Lau96}).
\begin{theorem}[Birch's Theorem] \label{thm-Birch}
There is a unique positive point in $\mathcal{L}_{c,u}(p)$, namely, there is 
a unique positive solution to \eqref{eqlik3} in $\V \setminus V(p_1 \cdots p_n L_f(p))$ if the scaling vector $c$ and the data vector $u$ 
are positive vectors.
\end{theorem}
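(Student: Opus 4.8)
The plan is to pass from the projective variety to the positive real points of the model, where finding critical points of $\ell_u$ becomes a concave maximization in ``natural parameters''; uniqueness will then follow from strict concavity, and existence from the positivity of $u$. First I would set up the reduction. By Proposition~\ref{prop:eqlik3} the positive points of $\mathcal{L}_{c,u}$ correspond, via the coordinatewise scaling $[p_1:\cdots:p_n]\mapsto[c_1p_1:\cdots:c_np_n]$, to the positive critical points of $\ell_u$ on $V^c$, so it suffices to show the latter consists of a single point. Intersecting $V^c$ with the slice $p_1+\cdots+p_n=1$ and keeping only coordinatewise-positive points, every such point is uniquely $p_i=c_i\theta^{a_i}/f(\theta)$ for some $\theta\in\bR_{>0}^{d-1}$, where $f(\theta)=\sum_i c_i\theta^{a_i}>0$ since $c$ is positive. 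Writing $\eta_j=\log\theta_j$ and pulling back the log-likelihood gives
\[
\ell(\eta)\;=\;\sum_{i=1}^n u_i\log p_i(\eta)\;=\;(Au)\cdot\eta\;-\;u_+\log f(e^\eta)\;+\;\text{const},\qquad f(e^\eta)=\sum_i c_i e^{a_i\cdot\eta},
\]
so that, by the Lagrange derivation of \eqref{likeqn2}--\eqref{eqlik3}, the positive critical points of $\ell_u$ on $V^c$ are exactly the critical points of $\ell$.

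For uniqueness, the key point is that $\log f(e^\eta)$ is a log-sum-exp function with positive weights $c_i$, hence convex, and its Hessian equals the covariance matrix of the discrete distribution assigning mass $p_i(\eta)$ to the point $a_i\in\bR^{d-1}$. Because $c>0$ forces every $p_i(\eta)>0$, this covariance matrix is positive definite as soon as the $a_i$ do not all lie on a common affine hyperplane, i.e. whenever $Q=\conv(A)$ is full-dimensional (the standing nondegeneracy making $\dim V=d-1$). Thus $\ell$ is strictly concave, and a strictly concave function has at most one critical point, which proves there is at most one positive solution.

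For existence I would use compactness together with the positivity of $u$. Let $M=\{p\in V^c:\ p_i>0,\ \sum_i p_i=1\}$; its closure $\overline{M}$ inside the probability simplex is compact, and on the slice the likelihood restricts to $\prod_i p_i^{u_i}$. Since every $u_i>0$, this product vanishes wherever some coordinate is zero, in particular on $\overline{M}\setminus M$, while it is strictly positive on the nonempty open set $M$; hence its maximum over $\overline{M}$ is attained at an interior point $\hat p\in M$, a genuine positive point of the model, which is therefore a positive critical point of $\ell_u$. Equivalently, the critical equation reads $\mathbb{E}_{\hat p}[a]=\tfrac{1}{u_+}Au$, and $u>0$ places $\tfrac{1}{u_+}Au$ in the relative interior of $Q$, which is precisely the image of the moment map $\eta\mapsto\mathbb{E}_{p(\eta)}[a]$.

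The main obstacle is existence rather than uniqueness: strict concavity instantly yields at most one critical point, but a strictly concave function on $\bR^{d-1}$ need not attain its supremum, and ruling out escape of the maximizer to the simplex boundary (equivalently $\|\eta\|\to\infty$) is exactly where positivity of $u$, through $\tfrac{1}{u_+}Au\in\operatorname{int}(Q)$, is indispensable. A careful writeup must also confirm that the interior maximizer is a smooth point at which the parameterization is a local diffeomorphism, so that ``maximizer of the likelihood'' and ``positive solution of \eqref{eqlik3}'' genuinely coincide, and must absorb the harmless one-parameter redundancy in the parameterization that appears when $Q$ is not full-dimensional.
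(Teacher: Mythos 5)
The paper never proves this statement: it records Birch's theorem as a classical fact and simply cites \cite{Lau96}, so there is no internal proof to compare against. Your argument is, in substance, the standard exponential-family proof that the cited literature gives --- pull back the log-likelihood to the natural parameters $\eta=\log\theta$, get uniqueness from strict concavity (the Hessian being $-u_+$ times the covariance matrix of the distribution placing mass $p_i(\eta)$ at $a_i$), and get existence from compactness of the simplex together with the vanishing of $\prod_i p_i^{u_i}$ wherever a coordinate vanishes, which uses $u>0$ --- and it is correct. Two points you flag in passing do need to be written out. First, the identification of the strictly positive points of $V^c\cap\{\sum_i p_i=1\}$ with the image of $\theta\in\bR_{>0}^{d-1}$ is exactly where the toric structure enters: for a strictly positive point one takes logarithms, and the binomial generators of the toric ideal force $\log p-\log c$ into the row span of $A$ augmented by the row of ones, so the point lies in the image of the positive torus; without this, the compactness argument only produces a maximizer on $\overline{M}$, not a critical point of the parameterized family. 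Second, the degenerate case: since $A$ has full rank $d-1$, the affine span of the columns $a_i$ has dimension $d-2$ or $d-1$, so the Hessian kernel is at most one-dimensional, and both $\ell(\eta)$ and $p(\eta)$ are constant along that kernel direction; hence uniqueness of the positive solution $p$ (though not of $\eta$) survives, exactly as you assert. With those two items made precise, your proof is complete, and it coincides with the proof the paper's citation points to rather than offering a genuinely different route.
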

\subsection{$A$-discriminant and principal $A$-determinant}
In this subsection we recall the classical definitions of discriminants and resultants. 

\begin{definition}
To any matrix $A$ as above, we can associate the variety $\nabla_A$, defined by
\[
\nabla_{A} =\overline{\left\{ c\in\left(\mathbb{C}^{*}\right)^{n} \mid\exists\theta\in\left(\mathbb{C}^{*}\right)^{d-1}\text{ such that }f\left(\theta\right)=\frac{\partial f}{\partial\theta_{i}}\left(\theta\right)=0\text{ for all }i\right\} }.
\]
If $\nabla_{A}$ has codimension one in $(\mathbb{C}^*)^n$, then the \emph{$A$-discriminant}, denoted $\Delta_{A}(f)$,
is defined to be the irreducible polynomial that vanishes on $\nabla_{A}$. This polynomial is unique up to multiplication by a scalar.
\end{definition}
As in the previous subsection, to $f = \sum_{i=1}^n c_i \theta^{a_i}$ where $c = (c_1, \ldots, c_n) \in (\bC^*)^n$ we associate 
the linear form $L_f(p) = \sum_{i=1}^n c_i p_i$. 
\begin{definition}
Given $f_1, \ldots, f_d$ where $f_j = \sum_{i=1}^n c_{i, j} \theta^{a_i}$ with  coefficients $c_{i,j} \in \bC^*$ there is 
a unique irreducible polynomial $R_A(f_1,\ldots, f_d)$  in the coefficients $c_{i,j}$, so that 
$R_A(f_1,\ldots, f_d) = 0$ at $(c_{i,j})$ if and only if $L_{f_1} = L_{f_2} = \cdots = L_{f_d} = 0$ on the toric variety $V$ \cite[Section 2, Chapter 8]{GKZ94}. This polynomial
is called the \textit{$A$-resultant} of $f_1, \ldots, f_d$. 
\end{definition} 
\begin{definition} For $f = \sum_{i=1}^n c_{i} \theta^{a_i}$,  we define the \textit{principal $A$-determinant} as
$$\EAf  \, = \, R_A\left(f, \theta_1 \frac{\partial f}{\partial \theta_1}, \ldots, \theta_{d-1} \frac{\partial f}{\partial \theta_{d-1}}\right).$$ 
This is a polynomial in the coefficients $c_{i}$ of $f$.
\end{definition}

In \cite{GKZ94}, the principal $A$-determinant $\EAf$ is related to the polytope $Q$ of the toric variety $\V$.
More precisely, 
when the toric variety $\V$ is smooth, and $A$ is the matrix whose columns correspond to the lattice points in $Q$, the principal $A$-determinant is 
\begin{equation}\label{eq:PrincipalADetProduct}
\EAf \, = \, \prod_{\Gamma \text{ face of } Q} \Delta_{\Gamma \cap A}
\end{equation}
where the product is taken over all nonempty faces $\Gamma \subset Q$ including $Q$ itself \cite[Theorem 1.2, Chapter 10]{GKZ94}. Here, $\Gamma \cap A$ is the matrix whose columns correspond to the lattice points contained in $\Gamma$. 
When $\V$ is
not smooth, the radical of its principal $A$-determinant is the polynomial given in \eqref{eq:PrincipalADetProduct}. 
The zero locus of $\EAf$ in $\bC^n$ will be denoted by $\LocusEAf$.

\section{ML degree and principal $A$-determinant}\label{sec:principal}
We first state and  prove our main theorem.

\begin{theorem} \label{mainthm}
Let $A = (a_1 \, \ldots \, a_n)$ be a full rank $(d-1)\times n$ integer matrix, and let $V^c \subset \bP^{n-1}$ be the scaled toric variety defined by the monomial parametrization given by \eqref{eq:scaledEmbedding} where $c \in (\bC^*)^n$ is fixed. 
Then $\mldeg(\V^c) < \deg(\V^{(1,1,\ldots,1,1)})$ if and only if $c \in \LocusEAf$.
\end{theorem}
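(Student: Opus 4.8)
The plan is to characterize the drop in ML degree through the transversality analysis already set up in Corollary~\ref{cor:mldegIneq}. Recall that $\mldeg(V^c)$ counts the solutions in $V \setminus V(p_1\cdots p_n L_f(p))$ to the linear system \eqref{eqlik3}, which cuts out a linear subspace $H_u$ of codimension $d-1$ in $\bP^{n-1}$. By B\'ezout's theorem, the total intersection $V \cap H_u$ has degree $\deg(V)$ whenever the intersection is proper; the drop $\mldeg(V^c) < \deg(V)$ therefore happens in exactly two ways, which I would treat as the two directions of the equivalence. Either the zero-dimensional part of $V \cap H_u$ has degree strictly less than $\deg(V)$ (so the intersection fails to be transverse, or acquires positive-dimensional components), or some of the $\deg(V)$ intersection points fall into the excluded locus $V(p_1\cdots p_n L_f(p))$. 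The key observation is that the coefficients of the linear forms $L_f, L_1, \ldots, L_{d-1}$ defining \eqref{eqlik3} are precisely the coefficients of $f, \theta_1\frac{\partial f}{\partial\theta_1}, \ldots, \theta_{d-1}\frac{\partial f}{\partial\theta_{d-1}}$, so vanishing phenomena for the intersection are governed by the resultant $R_A$ of exactly this tuple, which is the principal $A$-determinant $E_A(f)$.

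First I would prove the forward-contrapositive direction: if $c \notin \Sigma_A$, then $\mldeg(V^c) = \deg(V)$. Since $E_A(f)(c) \neq 0$ and $E_A(f)$ factors (up to radical) as the product \eqref{eq:PrincipalADetProduct} over all faces $\Gamma$ of $Q$, nonvanishing of the principal $A$-determinant tells me simultaneously that the full-dimensional resultant $R_A(f, \theta_1\partial_1 f, \ldots)$ and every facial discriminant $\Delta_{\Gamma\cap A}$ are nonzero at $c$. Nonvanishing of $R_A$ for the tuple $(L_f, L_1, \ldots, L_{d-1})$ means these $d$ linear forms do not have a common zero on $V$ in the torus, which forces the intersection $V \cap H_u$ to be finite and to meet the torus $V \setminus V(p_1\cdots p_n)$ in exactly $\deg(V)$ reduced points for generic $u$. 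The role of the facial factors $\Delta_{\Gamma\cap A}$ is to guarantee that no intersection point escapes to the boundary strata $V(p_i)$: each coordinate face $\Gamma$ corresponds to a toric stratum, and the discriminant $\Delta_{\Gamma\cap A}(c) \neq 0$ ensures $L_f$ restricted to that stratum behaves generically, so no solution lies on $V(L_f(p))$ on the boundary either. Hence all $\deg(V)$ points survive after removing $V(p_1\cdots p_n L_f(p))$.

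Then I would prove the reverse direction: if $c \in \Sigma_A$, then $\mldeg(V^c) < \deg(V)$. Here $E_A(f)(c) = 0$, so by \eqref{eq:PrincipalADetProduct} at least one facial discriminant $\Delta_{\Gamma\cap A}$ vanishes at $c$ (including possibly the top face $Q$, giving the $A$-discriminant itself). If the vanishing factor is $\Delta_A$ (the face $\Gamma = Q$), then $f$ and all its logarithmic derivatives have a common torus zero, which produces a singular point of the intersection or a positive-dimensional component, lowering the degree of the zero-dimensional piece; this is essentially a failure of the Bertini transversality invoked in Corollary~\ref{cor:mldegIneq}. If instead a proper-face discriminant $\Delta_{\Gamma\cap A}$ vanishes, then I would show a solution of \eqref{eqlik3} is forced onto the corresponding boundary stratum lying in $V(p_i)$ for some $i$, or onto $V(L_f(p))$; such a point is counted by B\'ezout toward $\deg(V)$ but is explicitly removed in passing to $\mathcal{L}_{c,u}$, so again the count drops. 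The main obstacle I anticipate is the careful bookkeeping in this second direction: I must verify that the vanishing of a facial factor genuinely \emph{forces} an honest intersection point into the excluded locus (rather than merely permitting one), and do so for generic $u$, so that the degree drop is not an artifact of a special data vector. Establishing this dictionary between the facial factorization \eqref{eq:PrincipalADetProduct} and the geometry of how $V \cap H_u$ meets the toric boundary strata is the technical heart of the argument, and where I expect to spend most of the effort, likely by restricting $f$ and the forms $L_i$ to each torus orbit closure and relating $R_A$ on that stratum to $\Delta_{\Gamma\cap A}$.
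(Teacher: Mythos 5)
Your overall skeleton---that the drop can only come from degeneration of the intersection or from solutions falling into the excluded locus $V(p_1\cdots p_n L_f(p))$---matches the paper's, but there is a genuine gap in your forward-contrapositive direction ($c\notin\LocusEAf \Rightarrow \mldeg(V^c)=\deg(V)$). The excluded locus is a \emph{union}: a solution of \eqref{eqlik3} is discarded if $L_f(p)=0$ \emph{or} if some coordinate $p_i=0$. Your argument handles the first possibility via nonvanishing of $R_A$, but for the boundary you only argue that nonvanishing of the facial discriminants $\Delta_{\Gamma\cap A}$ prevents solutions lying on $V(L_f(p))$ on the boundary. That leaves untouched the possibility of a solution of \eqref{eqlik3} on a boundary stratum with $L_f(p)\neq 0$: such a point is still excluded (some $p_i=0$), would still force $\mldeg(V^c)<\deg(V)$, and no discriminant detects it, since discriminants control common zeros of the forms $L_f,L_1,\ldots,L_{d-1}$, not solutions of the inhomogeneous likelihood system. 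What is needed here (and what the paper supplies) is a separate linear-algebra step: the support of any $p\in V$ corresponds to a face $\Gamma$ of $Q$; restricting \eqref{eqlik3} to the stratum $V_\Gamma$, a solution with $L_f(p)\neq 0$ would solve the $d$-equation system $1=L_{f_\Gamma}$, $(Au)_i=u_+L_i'$, whose left-hand forms span a space of dimension at most $\dim\Gamma+1\le d-1<d$, so for \emph{generic} $u$ the system is inconsistent. This genericity-of-$u$ argument is the technical heart of the theorem and is absent from your proposal.

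You have also located the anticipated hard work in the wrong direction, and made the easy direction hard. The paper's proof never uses the face factorization \eqref{eq:PrincipalADetProduct}; it uses the definition of $\EAf$ as the $A$-resultant $R_A\bigl(f,\theta_1\frac{\partial f}{\partial\theta_1},\ldots,\theta_{d-1}\frac{\partial f}{\partial\theta_{d-1}}\bigr)$, i.e., $c\in\LocusEAf$ if and only if $L_f,L_1,\ldots,L_{d-1}$ have a common zero $p$ on all of $V$, boundary included. With that, your reverse direction ($c\in\LocusEAf\Rightarrow$ drop) is immediate: the common zero $p$ satisfies every equation of \eqref{eqlik3} trivially as $0=0$, so $p\in\mathcal{L}'_{c,u}$, yet $p$ lies in the excluded locus; since $\deg\mathcal{L}'_{c,u}=\deg(V)$, the count of surviving solutions drops. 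No ``forcing'' of a solution into the excluded locus is required---the discriminantal point \emph{is} the excluded solution---and your proposed mechanism for the top face (a singular point or a positive-dimensional component of $V\cap H_u$) is not the operative one. If you insist on the face-by-face route, you would additionally need to prove the fact (true, but not stated by you) that vanishing of $f_\Gamma$ together with its logarithmic derivatives along the face forces all $d$ forms $L_f,L_1,\ldots,L_{d-1}$ to vanish at the corresponding boundary point of $V$; the resultant definition gives you this for free.
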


\begin{proof} Let $\V = \V^{(1,1,\ldots,1,1)}$.
Let $c \in (\bC^*)^n \cap \LocusEAf$. Then there exists $p \in V$ such that $L_f(p) = L_1(p) = \cdots = L_{d-1}(p) = 0$. 
Such a $p$ is a solution in $\mathcal{L}_{c,u}'$ but not a solution in $\mathcal{L}_{c,u}$. Since the degree of $\mathcal{L}_{c,u}'$ is equal to $\deg(V)$, and since $\mldeg(V^c)$ is equal to the degree of $\mathcal{L}_{c,u}$
we see that $\mldeg(V^c) < \deg(V)$.  

Conversely, suppose $\mldeg(V^c) < \deg(V)$. There are two ways this can happen (c.f. the proof of Corollary \ref{cor:mldegIneq}). On the one hand, there can exist $p \in V$ 
so that $L_f(p) = 0$,  
which implies that $L_1(p) = \cdots = L_{d-1}(p) = 0$. This means $c \in \LocusEAf$. 
On the other hand, there can exist a solution $p$ in $\mathcal{L}_{c,u}'$ where some $p_i = 0$. The coordinates of $p$ that are zero cannot be 
arbitrary. The support of $p$, that is $\{i \,: \, p_i \neq 0\}$, is in bijection with the columns $a_i$ of $A$ where  the convex hull of 
these columns is a face of $Q = \mathrm{conv}(A)$. Let $\Gamma$ be the corresponding face. Without loss of generality we will 
assume that $\Gamma \cap A = \{a_1, \ldots, a_k\}$. Let $e < d-1$ be the dimension of $\Gamma$. 
Since $p_{k+1} = \cdots = p_{n} = 0$, the point $p_\Gamma = [p_1 : \cdots : p_k]$ is in the toric variety $V_\Gamma$ defined by
$\Gamma \cap A$. Moreover, $L_f(p) = L_{f_\Gamma}(p_1, \ldots, p_k)$ where $L_{f_\Gamma} = \sum_{i=1}^k c_{i} p_i$
is the linear form associated to the polynomial $f_\Gamma$ whose support is $\Gamma$. Similarly,
$L_i(p) = L_i'(p_1, \ldots, p_k)$ where $L_i'$ is the linear form associated to $\theta_i \frac{\partial f_\Gamma}{\partial \theta_i}$.
If $L_{f}(p) = L_{f_\Gamma}(p_1, \ldots, p_k)  \neq 0$, then $p_\Gamma$ is a solution 
to  
\[
\begin{array}{ccc}
1 & = & L_{f_\Gamma} \\
(Au)_{1}  & = & u_{+} L_1'\\
(Au)_{2}  & = & u_{+} L_2'\\
 & \vdots\\
(Au)_{d-1} & = & u_{+} L_{d-1}'.
\end{array}
\]
But this is a $d \times k$ linear system of rank $e < d$. Because $u$ is generic, this system does not have 
a solution. Therefore the only way $p = [p_1 : \cdots : p_k : 0 : \cdots : 0]$ can be a solution in $\mathcal{L}_{c,u}'$ is with 
$L_{f_\Gamma}(p_1, \ldots, p_k) = L_f(p) = 0$. We again conclude that $c \in \LocusEAf$.
\end{proof}

\begin{remark} A natural question one can ask is under what conditions a general projective variety has ML degree one. This question was answered in \cite{Huh14}.
\end{remark}

\subsection{Toric Hypersurfaces} \label{sec:hypersurface}

Let $A = \left( a_1 \, \ldots \, a_{d+1}\right)$ be a $(d-1) \times (d+1)$ integer matrix of rank $d-1$. In this case, the toric variety $\V$ and each scaled toric variety $\V^c$ is a 
hypersurface generated by a single polynomial.  This polynomial can be computed as follows. We let $A'$ be the matrix obtained by appending a row of $1$'s to the matrix $A$.
The kernel of $A'$ is generated by an integer vector $w = (w_1,\ldots,w_{d+1})$ where $\gcd(w_1, \ldots,w_{d+1}) = 1$. Without loss of generality we assume that 
$w_1,\ldots,w_\ell >0$ and 
$w_{\ell +1},\ldots, w_{d+1} \leq 0$. Then the hypersurface is defined  by 
 \[
p_1^{w_1} p_2^{w_2} \cdots p_{\ell}^{w_\ell} - p_{\ell+1}^{-w_{\ell+1}} \cdots p_{d+1}^{-w_{d+1}}.
\] 
We say $A'$ and $A$ are in general position if no $d$ columns of $A'$ lie on a hyperplane. The matrix $A'$ is in general position if and only if $w$ has full support, i.e.~no $w_i$ is $0$. 
This is also equivalent to $Q = \conv(A)$ being a simplicial polytope with exactly $d$ of the columns of $A$ on each facet of $Q$. Recall that $f = \sum_{i = 1}^{d+1} c_i \theta_i^{a_i}$.
In this setting, the $A$-discriminant $\Delta_A$ and the principal $A$-determinant $\EAf$ can be 
calculated directly.
\begin{proposition}\cite[Chapter 9, Proposition 1.8]{GKZ94}  If $A$ is in general position then 
\[ \Delta_A = 
\left(w_{\ell+1}^{-w_{\ell+1}} \cdots w_{d+1}^{-w_{d+1}}\right) c_1^{w_1} \cdots c_{\ell}^{w_\ell} - \left(w_1^{w_1} \cdots w_\ell^{w_\ell}\right) c_{\ell+1}^{-w_{\ell+1}} \cdots c_{d+1}^{-w_{d+1}}.
\]
This is also the principal $A$-determinant of this toric hypersurface up to 
a monomial factor. 
\end{proposition}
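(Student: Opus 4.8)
The plan is to reduce the simultaneous vanishing of $f$ and its logarithmic derivatives to a single linear-algebra condition on the monomial values and then read off the binomial. First I would unwind the definition of $\nabla_A$: a point $c\in(\bC^*)^{d+1}$ lies in $\nabla_A$ exactly when there is $\theta\in(\bC^*)^{d-1}$ with $f(\theta)=0$ and $\theta_j\frac{\partial f}{\partial\theta_j}(\theta)=0$ for every $j$ (equivalent to $\frac{\partial f}{\partial\theta_j}=0$ since $\theta_j\neq 0$). Setting $g_i:=c_i\theta^{a_i}$, these $d$ equations say precisely that $A'\mathbf{g}=0$, where $\mathbf{g}=(g_1,\dots,g_{d+1})^{\top}$ and $A'$ is $A$ with a row of $1$'s appended: the equation $f=0$ is $\sum_i g_i=0$, and the $j$-th logarithmic derivative is $\sum_i (a_i)_j g_i=0$. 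Since $\ker A'$ is one-dimensional and spanned by $w$, this forces $\mathbf{g}=\lambda w$ for some scalar $\lambda$; as $c\in(\bC^*)^{d+1}$ and $w$ has full support, $\lambda\in\bC^*$. Thus $c_i\theta^{a_i}=\lambda w_i$ for all $i$.

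Next I would eliminate $\theta$ and $\lambda$. Because $Aw=0$, the identity $\prod_i(\theta^{a_i})^{w_i}=\theta^{Aw}=1$ holds; substituting $\theta^{a_i}=\lambda w_i/c_i$ and using $\sum_i w_i=0$ to cancel all powers of $\lambda$ gives $\prod_i(w_i/c_i)^{w_i}=1$, equivalently $\prod_i c_i^{w_i}=\prod_i w_i^{w_i}$. Separating positive from negative exponents and clearing denominators turns this into the stated binomial, so $c\in\nabla_A$ implies $\Delta_A(c)=0$. For the converse I would solve the system directly in logarithms: writing the $d+1$ scalar equations $\langle a_i,\log\theta\rangle-\log\lambda=\log(w_i/c_i)$ as a linear system in the $d$ unknowns $(\log\theta_1,\dots,\log\theta_{d-1},\log\lambda)$, the coefficient matrix is $[A^{\top}\mid-\mathbf{1}]$, of rank $d$, whose cokernel is spanned by $w$; hence a solution exists iff $\sum_i w_i\log(w_i/c_i)=0$, i.e. iff $\Delta_A(c)=0$. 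The $2\pi i$ branch ambiguities are harmless: since $\gcd(w_i)=1$, any element of $2\pi i\,\bZ$ can be written as $2\pi i\sum_i w_i k_i$ with $k_i\in\bZ$, so the branches can be chosen to make the orthogonality hold exactly, and exponentiating recovers an honest $\theta\in(\bC^*)^{d-1}$. This gives $\nabla_A\cap(\bC^*)^{d+1}=V(\Delta_A)$.

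It then remains to confirm that the binomial really is the discriminant and to pass to $\EAf$. Because $w$ is primitive, the binomial—a difference of two monomials in disjoint sets of variables with $\gcd$ of all exponents equal to $1$—is irreducible, so $\nabla_A$ has codimension one and the binomial equals $\Delta_A$ up to scalar, as claimed. For the last sentence I would invoke the product formula \eqref{eq:PrincipalADetProduct}: each nonempty proper face $\Gamma\subsetneq Q$ of the simplicial polytope $Q=\conv(A)$ is a simplex (general position forces every proper face to have affinely independent vertices), and the $A$-discriminant of an affinely independent configuration is $1$, since $f_\Gamma$ and its derivatives cannot all vanish on the torus. Thus every proper face contributes trivially, and $\EAf$ coincides with $\Delta_A$ up to the monomial factor coming from the resultant normalization (and, when $\V$ is singular, from passing to the radical in \eqref{eq:PrincipalADetProduct}).

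The step I expect to be the main obstacle is the converse (sufficiency) direction: vanishing of the binomial encodes only one relation of the rank-two lattice $\ker_{\bZ}A$, so one must ensure that the remaining relations can be met simultaneously by a single \emph{genuine} torus point rather than merely formally. The logarithmic-linear-algebra argument is exactly what controls this, and the delicate point is the branch absorption, where primitivity $\gcd(w_i)=1$ is used in an essential way; without it one would only recover a finite-index sublocus and the identification $\nabla_A=V(\Delta_A)$ could fail.
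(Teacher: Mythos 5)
Your proof is correct, but note that the paper itself does not prove this proposition at all: it is quoted from \cite{GKZ94} (Chapter 9, Proposition 1.8) with no internal argument, so there is no proof in the paper to compare against. Your derivation is essentially the standard circuit computation underlying the cited result, and it is sound at every step: reducing the singular-point condition to $A'\mathbf{g}=0$ with $\mathbf{g}=(c_i\theta^{a_i})$; using one-dimensionality of $\ker A'$ together with full support of $w$ (exactly where general position enters) to force $\mathbf{g}=\lambda w$; eliminating $\theta$ and $\lambda$ via $\theta^{Aw}=1$ and $\sum_i w_i=0$; and, for the converse, solving the linear system in logarithms, with the $2\pi i$ branch ambiguity absorbed using $\gcd(w_i)=1$. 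The irreducibility of the binomial (equivalent to primitivity of $w$) then correctly identifies it as $\Delta_A$ up to scalar. What your write-up buys over the citation is that it makes the paper self-contained on this point and makes visible precisely where the two hypotheses (full support and primitivity of $w$) are needed; your closing worry about the converse is well placed, and your logarithmic argument is exactly the right way to handle it.

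One imprecision in your final paragraph deserves correction. Under the paper's torus-closure definition of $\nabla_A$, your argument shows that \emph{every} proper face, vertices included, contributes trivially, and then \eqref{eq:PrincipalADetProduct} would literally give $\EAf=\Delta_A$ with no monomial factor at all; that is false (for $f=c_0+c_1\theta+c_2\theta^2$ one has $\EAf=\pm\, c_0c_2\,(c_1^2-4c_0c_2)$). The monomial factor is not a ``resultant normalization'' artifact: it is the product of the vertex contributions in the product formula as \cite{GKZ94} state it, because there the discriminantal variety is the closure in all of $\bC^n$ rather than in $(\bC^*)^n$, so for a one-point configuration $\{a_i\}$ the locus is $\{c_i=0\}$, of codimension one, and its discriminant is $c_i$ rather than $1$. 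Since the proposition only asserts equality up to a monomial factor, this does not invalidate your proof, but the provenance of that factor should be attributed to the vertices of $Q$, not to normalization conventions.
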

\begin{corollary}\label{cor-hypersurface} If $A$ is in general position, $\mldeg(\V^c) < \deg(\V)$ 
if and only if 
\[c \in V\left( \left(w_{\ell+1}^{-w_{\ell+1}} \cdots w_{d+1}^{-w_{d+1}}\right) c_1^{w_1} \cdots c_{\ell}^{w_\ell} - \left(w_1^{w_1} \cdots w_\ell^{w_\ell}\right) c_{\ell+1}^{-w_{\ell+1}} \cdots c_{d+1}^{-w_{d+1}}\right)
\]
where $p_1^{w_1} p_2^{w_2} \cdots p_{\ell}^{w_\ell} - p_{\ell+1}^{-w_{\ell+1}} \cdots p_{d+1}^{-w_{d+1}}$ is the polynomial defining the hypersurface $V$.
\end{corollary}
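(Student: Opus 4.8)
The plan is to deduce this directly from Theorem~\ref{mainthm} together with the preceding Proposition, which reduce the statement to an elementary observation about units on the torus $(\bC^*)^n$.

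First I would invoke Theorem~\ref{mainthm}. Since $A$ has full rank $d-1$, the theorem applies verbatim and gives $\mldeg(\V^c) < \deg(\V)$ if and only if $c \in \LocusEAf$, that is, if and only if $\EAf(c) = 0$. This replaces the statement about ML degrees with a statement purely about the vanishing of the principal $A$-determinant, so it suffices to identify $\LocusEAf \cap (\bC^*)^n$ with the zero locus of the displayed binomial. Next I would use the Proposition, which computes $\Delta_A$ explicitly and asserts that, for $A$ in general position, $\EAf$ agrees with this $\Delta_A$ up to a monomial factor; write $\EAf = c^\alpha \cdot \Delta_A$ for some monomial $c^\alpha = c_1^{\alpha_1}\cdots c_n^{\alpha_n}$ (here $n = d+1$) and a nonzero scalar. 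The decisive point is that we only ever evaluate at $c \in (\bC^*)^n$, where every coordinate $c_i$ is nonzero; hence $c^\alpha$ is a unit and $\EAf(c) = 0$ if and only if $\Delta_A(c) = 0$. Substituting the explicit expression for $\Delta_A$ from the Proposition then yields exactly the membership condition in the statement.

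The step requiring the most care is this passage from $\EAf$ to $\Delta_A$. One must confirm that the general-position hypothesis is precisely what guarantees that $w$ has full support, so that the displayed binomial is genuinely the irreducible $A$-discriminant (in particular that $\nabla_A$ has codimension one and $\Delta_A$ is defined), and that the discarded monomial factor, being a product of powers of the $c_i$, can introduce no spurious zeros on $(\bC^*)^n$. Once this is in place, the corollary follows by chaining the two equivalences $\mldeg(\V^c) < \deg(\V) \Leftrightarrow \EAf(c) = 0 \Leftrightarrow \Delta_A(c) = 0$, with no further computation.
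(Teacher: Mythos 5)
Your proposal is correct and is exactly the derivation the paper intends (the corollary is stated without explicit proof as an immediate consequence of Theorem~\ref{mainthm} and the preceding Proposition): chain $\mldeg(\V^c) < \deg(\V) \Leftrightarrow c \in \LocusEAf \Leftrightarrow \Delta_A(c) = 0$, using that the discarded monomial factor is a unit on $(\bC^*)^n$. You also correctly pinpoint the role of general position, namely that $w$ has full support so the displayed binomial is genuinely the $A$-discriminant.
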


\
\section{Rational Normal Scrolls}\label{sec:scroll}
In this section, we characterize the ML degree of a rational normal scroll.
A {\it rational normal scroll} is a toric variety associated to a $(d-1) \times n$ matrix $A$ of the following form:
\begin{equation}\label{ScrollMatrix}
A = \left( \begin{array}{ccccccccccccc}
1 & \cdots & 1 & 0 & \cdots & 0 & \cdots & 0 & \cdots & 0 & 0 & \cdots & 0 \\
0 & \cdots & 0 & 1 & \cdots & 1 & \cdots & 0 & \cdots & 0 & 0 & \cdots & 0 \\
\vdots &&&&&&&&&&&& \vdots \\
0 & \cdots & 0 & 0 & \cdots & 0 & \cdots & 1 & \cdots & 1 & 0 & \cdots & 0 \\
0 & 1\cdots & n_1 & 0 & 1\cdots & n_2 & \cdots & 0 & 1\cdots & n_{d-2} & 0 & 1\cdots & n_{d-1}
\end{array}
\right).
\end{equation}
Here, $n = d-1 + n_1 + n_2 + \cdots + n_{d-1}$.
This description is due to Petrovi\'c \cite{Pet08}. 

We index the data vector and scaling vector as $(u_{ij})$ and $(c_{ij})$ respectively.
For $i=1,2,\dots,d-1$, define $g_i$ to be the following univariate polynomial in the variable $\theta_{d-1}$:
$$g_i = c_{i0}+c_{i1}\theta_{d-1}+c_{i2}\theta_{d-1}^2+\cdots+c_{in_i}\theta_{d-1}^{n_i}.$$ 
Note that for $i\neq d-1$,   $g_i$ is the $i$th partial derivative of $f$ where
$$f = 
(\theta_1g_1+\theta_2g_2+\cdots+\theta_{d-2}g_{d-2})+
g_{d-1}.$$

We now state and prove our result on the ML degree of the rational normal scroll.
\begin{theorem}\label{thm:MLDegRatScroll}
The ML degree of a rational normal scroll given by the matrix $A$ as in  \eqref{ScrollMatrix} is equal to the number of distinct roots of $g_1g_2 \cdots g_{d-1}$.
\end{theorem}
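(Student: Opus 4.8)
The plan is to exploit the block structure of the scroll matrix to decouple the likelihood equations, collapse the whole system to a single univariate equation in $t:=\theta_{d-1}$, and then count its relevant roots by a partial-fraction argument. I would start from the equations \eqref{likeqn2}. Because rows $1,\dots,d-2$ of $A$ are block-indicator rows, one reads off $(Au)_i=u_{i+}:=\sum_{j=0}^{n_i}u_{ij}$ for $i\le d-2$, while $(Au)_{d-1}=\sum_{i,j}j\,u_{ij}$. Since $\partial f/\partial\theta_i=g_i$ for $i\le d-2$, the first $d-2$ equations read $u_{i+}f=u_+\theta_i g_i$. Summing these and using $f=\sum_{i\le d-2}\theta_i g_i+g_{d-1}$ collapses them to the single identity $f=\tfrac{u_+}{u_{(d-1)+}}\,g_{d-1}$, where $u_{(d-1)+}=\sum_j u_{d-1,j}$; thus on the solution locus $f$ is a scalar multiple of the univariate polynomial $g_{d-1}(t)$. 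Feeding this back yields $\theta_i=\tfrac{u_{i+}}{u_{(d-1)+}}\,g_{d-1}(t)/g_i(t)$ for $i\le d-2$, so every coordinate is determined by $t$ alone, provided $g_1(t)\cdots g_{d-1}(t)\neq 0$ (required so that $\theta_i\in\bC^*$ and $f\neq 0$).

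Next I would substitute these expressions into the remaining equation $(Au)_{d-1}f=u_+\,t\,\partial f/\partial\theta_{d-1}$. Dividing by $f=\tfrac{u_+}{u_{(d-1)+}}g_{d-1}$ turns it into the single univariate equation
\[
(Au)_{d-1}\;=\;\sum_{i=1}^{d-1}u_{i+}\,\frac{t\,g_i'(t)}{g_i(t)}.
\]
Writing each logarithmic derivative in partial fractions, $\tfrac{t g_i'}{g_i}=n_i+\sum_{\rho}\tfrac{m_i(\rho)\,\rho}{t-\rho}$ with $\rho$ ranging over the roots of $g_i$ of multiplicity $m_i(\rho)$, this rearranges to
\[
\sum_{\rho}\frac{\kappa_\rho}{t-\rho}\;=\;-\beta,
\qquad
\kappa_\rho=\rho\!\!\sum_{i:\,g_i(\rho)=0}\!\!u_{i+}\,m_i(\rho),
\qquad
\beta=\sum_i u_{i+}n_i-(Au)_{d-1},
\]
where now $\rho$ runs over the \emph{distinct} roots of $G:=g_1\cdots g_{d-1}$, of which there are $N$. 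Every $\rho$ is nonzero since $g_i(0)=c_{i0}\neq0$, and for generic $u$ each $\kappa_\rho$ and $\beta$ is nonzero.

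The final step is the count. Clearing denominators turns the last display into a polynomial whose leading term is $\beta\,t^{N}$ (degree exactly $N$ because $\beta\neq0$), so it has $N$ roots with multiplicity; evaluating at any $t=\rho_0$ gives $\kappa_{\rho_0}\prod_{\rho\neq\rho_0}(\rho_0-\rho)\neq0$, so no solution lies on $V(G)$. Hence there are exactly $N$ admissible values of $t$, each yielding a unique $(s,\theta_1,\dots,\theta_{d-1})\in(\bC^*)^d$ with $f\neq0$ (recall $s=1/f$), and these are distinct for generic $u$. I would conclude by invoking Proposition~\ref{prop:eqlik3} together with the faithfulness of the scroll parametrization (the matrix $A'$ obtained by appending a row of $1$'s has full rank $d$) to identify these with the critical points counted by $\mldeg(V^c)$, giving $\mldeg(V^c)=N$.

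The main obstacle I anticipate is the genericity bookkeeping that makes the reduction clean and reversible: verifying that clearing denominators neither loses an admissible $t$ nor creates a spurious one, that a single genericity condition on $u$ simultaneously forces $\beta\neq0$, all $\kappa_\rho\neq0$, and simple roots, and that each solution in $\theta$-coordinates corresponds to exactly one point $p\in V\setminus V(p_1\cdots p_n L_f(p))$. The partial-fraction degree computation is the conceptual heart, but it is short once the reduction to one variable is secured.
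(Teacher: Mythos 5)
Your proposal is correct and follows essentially the same route as the paper: the block structure reduces the likelihood equations \eqref{likeqn2} to a single univariate equation in $\theta_{d-1}$, and your identity $(Au)_{d-1}=\sum_{i=1}^{d-1}u_{i+}\,t\,g_i'(t)/g_i(t)$ is exactly the paper's equation \eqref{RatScrollEq} after dividing by $g_1\cdots g_{d-1}$. Your partial-fraction count (leading coefficient $\beta\neq 0$, residues $\kappa_\rho\neq 0$, no roots on $V(g_1\cdots g_{d-1})$) is a cleaner repackaging of the paper's divisibility argument that repeated roots of $g_1\cdots g_{d-1}$ factor out of both sides while no further factors can be removed, and both write-ups leave the same genericity details (e.g.\ simplicity of the final $N$ roots) implicit.
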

\begin{proof}
For each $i$, we use $g_i'$ to denote the derivative of $g_i$ with respect to $\theta_{d-1}$, and we write $u_{(d-1)+} = \sum_{j=0}^{n_{d-1}} u_{(d-1)j}$. 
With this set up, the last equation in \eqref{likeqn2} is 
\[
(Au)_{d-1}f = u_+ \theta_{d-1} \left( g_{d-1}' + \sum_{i=1}^{d-2} \theta_i g_i'\right).
\]
Multiply both sides of this equation by $g_1 g_2 \cdots g_{d-1}$.
Then substituting $(Au)_if = u_+ \theta_i g_i$ for each $1 \leq i \leq d-2$ in the appropriate summand on the right-hand side gives:
\begin{equation}\label{RatScrollEq}
(Au)_{d-1}g_1\cdots g_{d-1} = \theta_{d-1}\left(u_{(d-1)+} g_1\cdots g_{d-2}g_{d-1}' + \sum_{i=1}^{d-2} (Au)_i g_1 \cdots g_{i-1} g_i' g_{i+1} \cdots g_{d-1} \right).
\end{equation}
For generic $(u_{ij})$, the solutions to \eqref{likeqn2} correspond to the solutions of  \eqref{RatScrollEq} where $g_i \neq 0$ for every $i$.

Suppose that $\theta_{d-1} = \alpha$ is a repeated root of $g_1g_2\cdots g_{d-1}$.
Then either $\alpha$ is a shared root of $g_i$ and $g_j$ for $i \neq j$ or $\alpha$ is a repeated root of a single $g_i$.
In the first case, $\theta_{d-1} - \alpha$ divides the right-hand side of  \eqref{RatScrollEq} since each summand has either $g_i$ or $g_j$ as a factor.
In the second case, $\theta_{d-1}-\alpha$ divides both $g_i$ and $g_i'$ and hence the right-hand side of \eqref{RatScrollEq}.
Factoring out all such repeated roots results in an equation whose degree equals the number of distinct roots of $g_1\cdots g_{d-1}$.
This shows that the ML degree is at most the number of the distinct roots; we must still argue that no more roots can be factored out.

Suppose $g_1g_2\cdots g_{d-1}$ has no repeated roots and let $\theta_{d-1} = \beta$ be a root of an arbitrary $g_i$.
Then $\theta_{d-1} - \beta$ does not divide $g_1\cdots g_{i-1}g_i'g_{i+1} \cdots g_{d-1}$ but does divide every other summand on the right-hand side, so it does not divide the sum as a whole.
This follows because for generic data vectors $(u_{ij})$, no coefficient on the right-hand side vanishes.
Thus, if the repeated roots of $g_1g_2 \cdots g_{d-1}$ have been removed, the degree of the polynomial and hence the ML degree will not drop any further.
\end{proof}

We finish this section with examples.

\begin{example}
The simplest example of a rational normal scroll is a rational normal curve with
$A = \left( 0 \,  1 \, \ldots \, n \right)$.
We denote the scaled rational normal curve of degree $n$ by $\mathcal{C}_n^c$.
The ML degree of the scaled rational normal curve 
$\mathcal{C}_n^c$ is equal to the number of distinct roots of the polynomial
\[ c_0 + c_1 \theta + c_2 \theta^2 + \cdots + c_n\theta^n. \]
For example, if $n = 4$, the zero locus of the principal $A$-determinant (where the ML degree drops from four) is cut out by the discriminant of a degree four polynomial. The variety where the ML degree drops at least two consists of two irreducible components.  The first one corresponds to polynomials of the form $f(\theta) = k(\theta + a)^2 (\theta + b)^2$ and the second corresponds to polynomials of the form $f(\theta) = k(\theta+a)^3(\theta+b)$.
The variety where the ML degree is one is  a determinantal variety, defined by 2-minors of the matrix
$$
\left(
  \begin{array}{ccccc}
    c_1 & 4c_2 & 3c_3 & 8c_4\\
    2c_0 & 3c_1 & c_2 & c_3
  \end{array}
\right).
$$
\end{example}

\begin{example}
In the case that $k=2$, the toric variety is known as a \emph{Hirzebruch surface}, and we can draw the corresponding polytope (Figure \ref{fig:hirzebruch}).
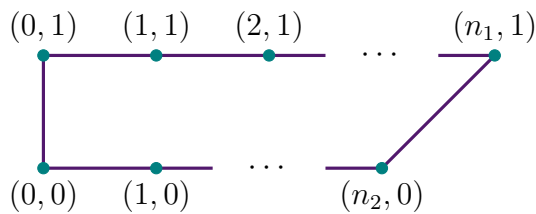
\begin{figure}[h]
  \centering
  \begin{tikzpicture}[scale = 1.5]

    \draw[pur1,very thick] (1.5,0)--(0,0)--(0,1)--(2.5,1);
    \draw[pur1,very thick] (2.5,0)--(3,0)--(4,1)--(3.5,1);
   \draw[fill,teal] (0,0) circle(.05);
    \draw[fill,teal] (1,0) circle(.05);
    \draw[fill,teal] (3,0) circle(.05);
    \draw[fill,teal] (0,1) circle(.05);
    \draw[fill,teal] (1,1) circle(.05);
    \draw[fill,teal] (2,1) circle(.05);
    \draw[fill,teal] (4,1) circle(.05);
    \draw (2,0) node[]{$\cdots$};
    \draw (3,1) node[]{$\cdots$};

    \draw (3,-.25) node[]{$(n_2,0)$};
    \draw (1,-.25) node[]{$(1,0)$};
    \draw (0,-.25) node[]{$(0,0)$};
    \draw (0,1.25) node[]{$(0,1)$};
    \draw (1,1.25) node[]{$(1,1)$};
    \draw (2,1.25) node[]{$(2,1)$};
    \draw (4,1.25) node[]{$(n_1,1)$};
  \end{tikzpicture}
  \caption{Polytope in $\mathbb{R}^2$ corresponding to a Hirzebruch surface.}
  \label{fig:hirzebruch}
\end{figure}
We denote this surface by $\mathcal{H}_{n_1,n_2}$.
For $c = (1,1,\ldots,1,1)$, 
the ML degree of $\mathcal{H}_{n_1,n_2}$ is $n_1 + n_2 - \gcd(n_1 + 1,n_2 + 1) + 1$.
This is because 
 $g_1 = 1 + \theta + \cdots + \theta^{n_1}$ and $g_2 = 1 + \theta + \cdots + \theta^{n_2}$.
The roots of $g_1$ and $g_2$ are respectively the $(n_1 + 1)$th and $(n_2 + 1)$th roots of unity not equal to 1.
It follows that there are $\gcd(n_1 + 1,n_2 + 1) - 1$ repeated roots of $g_1g_2$. 
\end{example}

\begin{example}\label{ex:sameN} 
If we choose the coefficients $c_{ij}$ to be the binomial coefficients $\binom{n_i}{j}$,  then $g_i=(1+\theta_{d-1})^{n_i}$ for all $i$.
For this choice of coefficients, the ML degree is 1 by Theorem~\ref{thm:MLDegRatScroll}, and the likelihood equations \eqref{likeqn1} have one solution given by 
$$\theta_{i}=
\frac{-u_{(d-1)+}u_{i+}}{u_{++}^{2} }
\left(1+\theta_{d-1}\right)^{n_{d-1}-n_{i}}\text{ for }i=1,2,\dots,d-2,
$$
where $u_{++} = \sum_{i}\sum_{j} u_{ij}$, and $\theta_{d-1}$ equals the unique solution to \[(Au)_{d-1}(1+\theta_{d-1}) =\theta_{d-1}( u_{(d-1)+} + \sum_{i=1}^{d-2} n_i u_{i+}).\]
\end{example}

\section{Veronese Embeddings}  \label{sec:ver}

In this section we study Veronese and Veronese-type varieties.

\begin{definition}
Consider the $(d-1)\times n$ integer matrix $A$ with columns that are the non-negative integer vectors whose coordinate sum $\leq k$. The projective toric variety defined by $A$ is the \textit{Veronese} ${\rm Ver}(d-1,k)$ for $d,k \geq 1$. 
\end{definition}

It can be seen that $\deg({\rm Ver}(d-1,k))= k^{d-1}$. A conjecture presented in \cite{V16} is that under the standard embedding, that is, with scaling vector $c=(1,1,\ldots,1,1)$, the ML degree of any Veronese variety equals its degree. We now show this is true when $k \leq d-1$.

 \begin{proposition} \label{prop:Ver(dk)}
 Consider ${\rm Ver}(d-1,k)$ for $k\leq d-1$ which is  embedded 
using the map \eqref{eq:scaledEmbedding} with scaling given by $c=(1,1,\ldots,1, 1)$. Then $c\notin \Sigma_A$ and $\mldeg( {\rm Ver}(d-1,k))=k^{d-1}$.
 \end{proposition}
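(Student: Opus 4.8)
The plan is to apply Theorem~\ref{mainthm} and then translate the resulting non-vanishing statement into a concrete assertion about the complete homogeneous symmetric polynomial. Since it is already recorded that $\deg({\rm Ver}(d-1,k))=k^{d-1}$, proving $\mldeg({\rm Ver}(d-1,k))=k^{d-1}$ is by Theorem~\ref{mainthm} equivalent to showing $c=(1,\ldots,1)\notin\Sigma_A$. Following the proof of Theorem~\ref{mainthm}, $\mathbf 1\in\Sigma_A$ holds exactly when there is a point $p\in V=V^{(1,\ldots,1)}$ with $L_f(p)=L_1(p)=\cdots=L_{d-1}(p)=0$. First I would parametrize $V$ as the degree-$k$ Veronese image of $\bP^{d-1}$, writing $p=(x^b)_{|b|=k}$ for $[x_0:\cdots:x_{d-1}]\in\bP^{d-1}$. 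A direct computation then identifies $L_f(p)$ with the complete homogeneous symmetric polynomial $h_k(x_0,\ldots,x_{d-1})=\sum_{|b|=k}x^b$, and $L_i(p)$ with $x_i\,\partial h_k/\partial x_i(x)$. By Euler's relation the vanishing of all these forms is equivalent to $x_i\,\partial_i h_k(x)=0$ for every $i=0,\ldots,d-1$. Thus the whole proposition reduces to showing that $h_k$ has no such \emph{coordinate-critical} point in $\bP^{d-1}$.

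To rule out a coordinate-critical point $x$, let $S=\{i:x_i\neq 0\}$ and $s=|S|$, and I would set up an inductive descent on the degree. Two ingredients drive it: for $i\notin S$ one computes $\partial_i h_k(x)=h_{k-1}(x_S)$ (only the support variables survive), whereas coordinate-criticality gives $\partial_i h_k(x)=0$ for $i\in S$; and the symmetric-function identity $\sum_{i=0}^{d-1}\partial_i h_m=(d+m-1)\,h_{m-1}$, which follows from $m\,h_m=\sum_{r\geq 1}p_r h_{m-r}$ together with $p_0=d$. Summing $\partial_i h_k(x)$ over all $i$ and comparing the two evaluations yields $(s+k-1)\,h_{k-1}(x_S)=0$, hence $h_{k-1}(x_S)=0$. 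Next, writing $\partial_i h_k(x_S)=Q(x_i)$ with $Q(z)=\sum_{b=0}^{k-1}h_b(x_S)\,z^{k-1-b}$, the fact that $Q(0)=h_{k-1}(x_S)=0$ lets me factor $Q(z)=z\,\tilde Q(z)$ with $\tilde Q(z)=\sum_{b=0}^{k-2}h_b(x_S)\,z^{k-2-b}$; since $x_i\neq 0$ for $i\in S$, the equation $Q(x_i)=0$ forces $\tilde Q(x_i)=\partial_i h_{k-1}(x_S)=0$, so $x$ is again coordinate-critical, now for $h_{k-1}$. Iterating produces coordinate-criticality for $h_{k-1},h_{k-2},\ldots,h_2$. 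Finally, criticality for $h_2$ reads $x_i+h_1(x_S)=0$ for all $i\in S$, so every nonzero coordinate equals $-h_1(x_S)$; substituting into $h_1(x_S)=\sum_{i\in S}x_i$ gives $(s+1)\,h_1(x_S)=0$, whence $x=0$, contradicting $x\neq 0$. Therefore no coordinate-critical point exists and $\mathbf 1\notin\Sigma_A$.

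The hard part will be the descent step, namely proving that coordinate-criticality for $h_m$ propagates to $h_{m-1}$. This rests squarely on the two identities above: the global relation $\sum_i\partial_i h_m=(d+m-1)h_{m-1}$ (used to force $h_{m-1}(x_S)=0$) and the factorization of the one-variable gradient polynomial $Q$ (used to transfer criticality down one degree). The delicate bookkeeping is the separation of the support $S$ from its complement — in particular the evaluation $\partial_i h_k(x)=h_{k-1}(x_S)$ for $i\notin S$ — since this is precisely what couples the ambient dimension $d$ to the symmetric functions of the nonzero coordinates, and one must verify it carefully, together with the fact that $V$ really is the full Veronese image so that boundary (non-torus) points are included.

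One caveat I would flag: as organized above, the descent does not appear to invoke the hypothesis $k\le d-1$ anywhere, nor does it seem needed for $\deg({\rm Ver}(d-1,k))=k^{d-1}$. If that is genuinely the case, the same argument would establish the full conjecture of \cite{V16}, so before claiming it I would re-examine the descent for any hidden dependence on $k\le d-1$ — for instance a degenerate factorization of $Q$ when $k$ is large relative to $s$, or a subtlety in identifying $V$ with the smooth Veronese — and would fall back on verifying the coordinate-critical non-existence only in the stated range $k\le d-1$ if such a dependence surfaces.
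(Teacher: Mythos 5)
Your proof is correct, and it takes a genuinely different route from the paper's. The paper reduces, by induction over the faces of $Q=\conv(A)$ (each face of a Veronese polytope being a lower-dimensional Veronese polytope), to showing $\mathbf{1}\notin\nabla_A$, i.e.\ that the affine polynomial $f=1+h_1+\cdots+h_k$ in $\theta_1,\ldots,\theta_{d-1}$ defines a smooth hypersurface; this is deduced from the fact that for $k\le d-1$ this polynomial is a \emph{linear} form in the algebraically independent generators $h_1,\ldots,h_{d-1}$ of the ring of symmetric polynomials --- the hypothesis $k\le d-1$ is precisely what makes that trick available. You instead invoke the resultant characterization of $\Sigma_A$ (a common zero of $L_f,L_1,\ldots,L_{d-1}$ anywhere on $V$, boundary strata included), pass to the homogeneous picture where the conditions become coordinate-criticality of $h_k$ in the $d$ variables $x_0,\ldots,x_{d-1}$, and eliminate every possible support $S$ at once by the descent $h_k\to h_{k-1}\to\cdots\to h_2$. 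I checked the three facts the descent rests on, and all hold: $\partial_i h_m(x)=h_{m-1}(x_S)$ for $i\notin S$; $\sum_{i=0}^{d-1}\partial_i h_m=(d+m-1)h_{m-1}$ (the coefficient of a monomial $x^{b}$ with $|b|=m-1$ on the left is $\sum_i(b_i+1)$); and $\partial_i h_m=\sum_{b=0}^{m-1}h_b\,y_i^{m-1-b}$ (from $\partial_i\prod_j(1-y_jt)^{-1}=t(1-y_it)^{-1}\prod_j(1-y_jt)^{-1}$). As to what each approach buys: the paper's argument is shorter modulo GKZ machinery, but its face induction is left implicit (faces of dimension $e<k$ are Veronese polytopes for which the inductive hypothesis $k\le e$ fails, so they need a separate argument), whereas your support-set bookkeeping treats all faces uniformly and is elementary and self-contained. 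Most notably, your closing caveat resolves in your favor: nothing in the descent uses $k\le d-1$ --- the coefficients $s+m-1$ are nonzero for all $s\ge1$, $m\ge2$, and the factorization of your univariate polynomial $Q(z)$ never degenerates --- so your argument establishes the conjecture of \cite{V16} for all $k$ and $d$, strictly more than Proposition~\ref{prop:Ver(dk)}. The only dependency worth flagging is that the equivalence ``$\mathbf{1}\in\Sigma_A$ if and only if the $d$ linear forms have a common zero on $V$'' rests on the paper's definition of the $A$-resultant; but the paper's own proof of Theorem~\ref{mainthm} rests on exactly the same fact, so your proof is valid within the same framework.
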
  
 \begin{proof}
First note that, by induction, it is sufficient to show that $c\notin \Delta_A$. 
To prove this we need to show that the hypersurface defined by the polynomial 
$$f=\sum \theta^{a_i}=1+\sum_{i=1}^k h_i$$
is nonsingular, where $h_i$ is the $i$th complete homogeneous symmetric polynomial in $\theta_1,\dots, \theta_{d-1}$.  
The complete homogeneous symmetric polynomials $h_1, \dots, h_{d-1}$ form an algebra basis for $\Lambda_{d-1}$, the ring of symmetric polynomials in variables $\theta_1,\dots, \theta_{d-1}$, see \cite[(2.8)]{IGM98}. The regular embedding $\varphi$ of $\Lambda_{d-1}$ into the polynomial ring $\bC[\theta_1,\dots, \theta_{d-1}]$ (specified by the definition of $h_i$ in terms of $\theta_j$'s) induces an embedding $\varphi^*$ of ${\rm Spec}(\Lambda_{d-1})$ into $\mathbb{A}^{d-1}={\rm Spec}(\bC[\theta_1,\dots, \theta_{d-1}])$. Consider the hypersurface $V=V(1+h_1+h_2+\cdots+h_k)\subset {\rm Spec}(\Lambda_{d-1})$. For $k\leq d-1$ we have that $1+h_1+h_2+\cdots+h_k$ is a linear form in the indeterminates  $h_1,\dots, h_{d-1}$. Hence $V$ is smooth in ${\rm Spec}(\Lambda_{d-1})$. Since the induced map $\varphi^*$ is a regular embedding, $\varphi^* (V)$ is smooth in $\mathbb{A}^{d-1}$, and by the definition of $h_i$ and the map $\varphi$ we have that $
 \varphi(1+\sum_{i=1}^k h_i)=f$.
 Thus $f$ defines a smooth hypersurface in $\mathbb{A}^{d-1}$, and $c=(1,1,\dots, 1,1)\notin \Delta_A$. The conclusion follows by Theorem \ref{mainthm}.
 \end{proof}

\begin{remark}[Hypersimplex]
Let $k \leq d-1$. Consider the $(d-1)\times n$ integer matrix $A$ where $n = \binom{d}{k}$ and where the columns of $A$ are the vectors in $\{0,1\}^{d-1}$ that have precisely
$k$ or $k-1$ entries equal to $1$. The $(d-1)$-dimensional polytope
$Q = {\rm conv}(A)$ is called the {\em hypersimplex}. The projective toric variety $V$ associated to $A$ represents generic torus orbits on the Grassmannian of $k$-dimensional linear subspaces in $\bC^d$. It is shown in the discussion preceding \cite[Proposition 4.7]{HS17} that $c=(1,1,\dots,1,1)\notin \Sigma_A$ for this matrix $A$. We remark that this result can be seen from the same proof given in Proposition \ref{prop:Ver(dk)}.

Let $e_{k-1}$ and $e_k$ be elementary symmetric 
polynomials in $\theta_1, \ldots, \theta_{d-1}$ of degree $k-1$ and $k$, respectively. Similar to above, we need to only show that the hypersurface defined by the polynomial 
\[ f = e_{k-1} + e_{k}
\]
is nonsingular. This proceeds in a manner identical to the proof of Proposition \ref{prop:Ver(dk)} since the
elementary symmetric polynomials also form an algebra basis for 
$\Lambda_{d-1}$, and hence are algebraically independent (see \cite[(2.4)]{IGM98}). Thus, for the projective toric variety
$\V$ associated to the hypersimplex, ${\rm mldeg}(V )$ is equal to the normalized volume of the hypersimplex $Q$, which is the Eulerian number $A(d - 1, k - 1)$.

It should be noted that the form of the definition of the hypersimplex given above differs slightly from that in \cite{HS17}, however the definitions are equivalent (i.e.~the resulting exponents of the monomial maps differ by elementary row operations). The difference is due to the fact that in \cite{HS17} a projective toric variety is defined by a monomial map given by a $d\times n$ integer matrix $\mathcal{A}$ with full rank and the vector $(1,\dots, 1)$ in the row space. In the convention used in our note we (equivalently) assume that the last row of what would be their matrix $\mathcal{A}$ consists only of ones and omit this row from our matrix $A$. 

\end{remark}

Returning to ${\rm Ver}(d-1,k)$, we note that in the case $k=2$, the polynomial
$f$ reduces to a quadratic form, and a direct proof that 
$c =(1,1, \ldots, 1,1) \notin \Sigma_A$ can be given by looking at the principal minors of the corresponding symmetric matrix  \begin{equation}
C=\begin{pmatrix}
 2c_{00} & c_{01} & \cdots & c_{0(d-1)}\\
  c_{01} & 2c_{11} & \cdots & c_{1(d-1)}\\
   \vdots & \vdots & \ddots & \vdots\\
    c_{0(d-1)} & c_{1(d-1)} & \cdots & 2c_{(d-1)(d-1)}
 \end{pmatrix},\label{eq:CmatVer(d2)}\end{equation} 
where $f = (1, \theta_1, \ldots, \theta_{d-1}) C  (1, \theta_1, \ldots, \theta_{d-1})^T$. 
 \begin{proposition}
Consider ${\rm Ver}(d-1,2)$. Then $c=(1,1,\dots, 1,1) \notin \Sigma_A$. In particular, we have that ${\rm mldeg} ( {\rm Ver}(d-1,2))=\deg( {\rm Ver}(d-1,2))=2^{d-1}.$
 \end{proposition}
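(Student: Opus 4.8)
The plan is to prove that the quadric hypersurface associated to ${\rm Ver}(d-1,2)$ with $c=(1,\ldots,1)$ is nonsingular, since by Theorem~\ref{mainthm} (and the inductive reduction to the $A$-discriminant used in Proposition~\ref{prop:Ver(dk)}) it suffices to show $c\notin\Delta_A$, and for a quadric this amounts to the defining quadratic form being nondegenerate. First I would write $f=(1,\theta_1,\ldots,\theta_{d-1})\,C\,(1,\theta_1,\ldots,\theta_{d-1})^T$ as in \eqref{eq:CmatVer(d2)} and observe that $c\notin\Delta_A$ is equivalent to $\det C\neq 0$: a projective quadric $V(f)$ is smooth precisely when its Gram matrix is invertible, and by the definition of $\nabla_A$ a singular point of $f$ on the torus yields a common zero of $f$ and all $\theta_i\,\partial f/\partial\theta_i$, which for a quadric is detected by degeneracy of $C$.

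Next I would specialize to $c=(1,1,\ldots,1)$. Every monomial $\theta^{a_i}$ with $a_i$ of coordinate-sum $\leq 2$ appears exactly once, so each off-diagonal entry of $C$ equals $c_{k\ell}=1$ (the coefficient of the distinct cross term $\theta_k\theta_\ell$), each diagonal entry equals $2c_{kk}=2$ (since $\theta_k^2$ has coefficient $1$ but is doubled in the symmetric-matrix convention), and the top-left entry is $2c_{00}=2$. Thus $C$ becomes the $d\times d$ matrix with $2$ on the diagonal and $1$ everywhere off the diagonal, i.e.\ $C=I_d+J_d$ where $J_d$ is the all-ones matrix. The key computation is then simply $\det(I_d+J_d)$.

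The main step is this determinant evaluation, which is routine: $J_d$ has eigenvalue $d$ (once, with eigenvector $\mathbf 1$) and eigenvalue $0$ (with multiplicity $d-1$), so $I_d+J_d$ has eigenvalues $d+1$ and $1$, giving $\det C = d+1 \neq 0$. Hence $C$ is nonsingular, $f$ defines a smooth quadric, and $c=(1,\ldots,1)\notin\Delta_A$. By Theorem~\ref{mainthm} this yields $c\notin\Sigma_A$, so ${\rm mldeg}({\rm Ver}(d-1,2))=\deg({\rm Ver}(d-1,2))=2^{d-1}$, where the degree formula is the special case $k=2$ of $\deg({\rm Ver}(d-1,k))=k^{d-1}$.

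I expect no serious obstacle here; the only point requiring a little care is the factor-of-two bookkeeping in passing between the coefficients $c_{k\ell}$ of $f$ and the entries of the symmetric Gram matrix $C$ (diagonal entries carry a factor $2$ while off-diagonal entries do not), and the verification that degeneracy of $C$ is genuinely equivalent to membership in $\Delta_A$ rather than merely implying it. Since the quadric is irreducible and the proposition only needs $c\notin\Delta_A$ (not a full description of $\Delta_A$), the clean implication ``$\det C\neq0\Rightarrow f$ smooth $\Rightarrow c\notin\nabla_A\subseteq\Delta_A$'' is all that is required, and the eigenvalue computation closes the argument.
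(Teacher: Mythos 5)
Your proposal is correct and follows essentially the same route as the paper: the paper likewise reduces the question to nondegeneracy of the Gram matrix, observing that every face discriminant appearing in $\Sigma_A = \bigcup_{\Gamma} \nabla_{A\cap\Gamma}$ is a principal minor of $C$, that at $c=(1,1,\dots,1)$ every such $r\times r$ minor has the same form (all $2$'s on the diagonal, $1$'s off it), and that its determinant is $r+1\neq 0$. Your only departures are cosmetic --- you organize the face reduction via the induction borrowed from Proposition~\ref{prop:Ver(dk)} rather than the principal-minor identification (the same check, unwound), and you evaluate $\det(I_d+J_d)=d+1$ by eigenvalues --- though note that it is this face/induction step, not Theorem~\ref{mainthm} itself, that upgrades $c\notin\Delta_A$ to $c\notin\Sigma_A$.
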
  
 \begin{proof}
 Let $Q=\conv(A)$. From \eqref{eq:PrincipalADetProduct} we have that $$\Sigma_A=\hspace{-3mm}\bigcup_{\Gamma \text{ face of } Q} \nabla_{A\cap \Gamma}.$$ The A-discriminant of the toric variety corresponding to each face will be given by a principal minor (of size corresponding to the dimension of the face) of the symmetric matrix $C$ in \eqref{eq:CmatVer(d2)}. Any principal minor of size $r\times r$ will have the same form as $C$.
 Hence, to show that $c=(1,1,\dots,1,1)\notin \Sigma_A$ we need only verify that the $r\times r$ matrix $$\tilde{C}=C(1,1,\dots,1,1)=\begin{pmatrix}
 2 & 1 & \cdots & 1\\
  1 & 2 & \cdots & \vdots\\
   \vdots & 1 & \ddots & 1\\
    1 & 1& \cdots & 2
 \end{pmatrix}$$ has a nonzero determinant. A calculation shows that $\det(\tilde{C})=r+1\neq 0$ for all $r\geq 2$. Thus, by Theorem~\ref{mainthm}, it follows that ${\rm mldeg} ( {\rm Ver}(d-1,2))=\deg( {\rm Ver}(d-1,2))=2^{d-1}.$
 \end{proof}
Now we give a sufficient condition for the opposite effect to happen: having ML degree equal to one, instead of having ML degree equal to the degree. The polynomial $f$ has terms of degree $\leq k$ 
in $\theta_1, \ldots, \theta_{d-1}$. It uniquely corresponds to
a symmetric tensor $\mathcal{C}$ of size $d \times d \times \ldots \times d$, where the product is $k$-fold. We say $\mathcal{C}$ has tensor rank one 
if the polynomial $f$ is a power of a linear form.
\begin{theorem}\label{Thmrank1}
Let $\mathcal{C}$ be the symmetric tensor corresponding to $f$ where
\[ f = \sum_{{\deg(\theta^a) \leq k } }  c_a \theta^a.
\]
If the tensor 
rank of $\mathcal{C}$ is equal to one then $\mldeg({\rm Ver}(d-1,k)^c)=1$. 
 \end{theorem}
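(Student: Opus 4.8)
The plan is to unpack the rank-one hypothesis, substitute it into the likelihood equations \eqref{likeqn2} written in the $\theta$-coordinates, and observe that the resulting system collapses to a single linear system with a unique admissible solution for generic $u$. Since the symmetric tensor $\mathcal{C}$ has rank one, the degree-$k$ homogenization of $f$ is a power of a linear form, so after dehomogenizing (and absorbing a harmless global scalar, which cancels from both sides of \eqref{likeqn2}) I may write $f = \ell^k$ with $\ell = b_0 + b_1\theta_1 + \cdots + b_{d-1}\theta_{d-1}$. The first thing I would record is that every $b_i$, including $b_0$, is nonzero: the coefficient of $\theta_i^k$ in $\ell^k$ is $b_i^k$ and the constant term is $b_0^k$, so a single vanishing $b_i$ would force a coordinate of $c$ to vanish, contradicting $c \in (\bC^*)^n$.

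Next I would substitute $f = \ell^k$ into \eqref{likeqn2}. Using $\theta_i \frac{\partial f}{\partial\theta_i} = k\, b_i\, \theta_i\, \ell^{k-1}$ together with the standing constraint $f \neq 0$ (equivalently $\ell \neq 0$), I can divide each equation by $\ell^{k-1}$ to obtain the \emph{linear} system
\[
(Au)_i\,\ell(\theta) \;=\; u_+ k\, b_i\, \theta_i, \qquad i = 1, \ldots, d-1.
\]
This reduction is the conceptual heart of the argument: raising a linear form to the $k$-th power is exactly what allows the degree-$k$ equations to be divided down to linear ones. I expect this to be the only genuinely decisive step.

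To solve the linear system I would introduce the auxiliary scalar $t = \ell(\theta)$, so that each equation reads $\theta_i = (Au)_i\, t / (u_+ k\, b_i)$. Substituting back into $\ell(\theta) = b_0 + \sum_i b_i \theta_i = t$ yields the single scalar relation
\[
t\Bigl(u_+ k - \sum_{i=1}^{d-1}(Au)_i\Bigr) \;=\; b_0\, u_+ k .
\]
Since $\sum_{i}(Au)_i = \sum_a |a|\, u_a$ and $u_+ k = \sum_a k\, u_a$, the coefficient of $t$ equals $\sum_a (k-|a|)\,u_a$, a nonzero linear form in $u$ (the constant monomial $a=0$ contributes $k\neq 0$), hence nonvanishing for generic $u$. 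Thus $t$, and with it the full vector $\theta$, is uniquely determined.

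Finally I would verify that this unique solution is an honest critical point, i.e.\ lies in the torus and satisfies $f \neq 0$. From the displayed relation $t = b_0\, u_+ k / \bigl(u_+ k - \sum_i (Au)_i\bigr) \neq 0$ because $b_0 \neq 0$, whence $\ell(\theta) = t \neq 0$ and $f = t^k \neq 0$; and then $\theta_i = (Au)_i\, t/(u_+ k\, b_i) \neq 0$ for generic $u$ because no $(Au)_i$ vanishes identically. Hence there is exactly one solution of \eqref{likeqn2} in $(\bC^*)^{d-1}$ with $f \neq 0$. Passing back through the parametrization — which is injective on the torus, since the exponent matrix $A$ of ${\rm Ver}(d-1,k)$ contains the zero column and the standard basis vectors, so $\theta_i$ is recovered as a ratio of coordinates of $p$ — this corresponds via Proposition~\ref{prop:eqlik3} to exactly one point counted by the ML degree, giving $\mldeg({\rm Ver}(d-1,k)^c) = 1$. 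I anticipate no serious obstacle in this last paragraph: the only points to watch are that all $b_i \neq 0$ and that the finitely many genericity conditions on $u$ (nonvanishing of the $(Au)_i$ and of $\sum_a (k-|a|)u_a$) are simultaneously satisfiable, both of which are immediate.
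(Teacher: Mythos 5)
Your proposal is correct and takes essentially the same approach as the paper: the rank-one hypothesis gives $f=\ell^k$, the factor $\ell^{k-1}$ cancels in the likelihood equations \eqref{likeqn2}, and the system collapses to a linear one in the $\theta_i$'s. The paper's proof stops at that reduction, while you additionally solve the linear system explicitly and verify nondegeneracy, torus membership, and injectivity of the parametrization; these are completions of the same argument rather than a different route.
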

 \begin{proof}
 In this case we have that $f = L^k$ where $L=b_0 + b_1\theta_1 + \cdots + b_{d-1} \theta_{d-1}$. Then in the likelihood equations \eqref{likeqn2} we have that $L \neq 0$ and
 \begin{equation}
 \begin{array}{ccc} 
(Au)_1 L^k  & = & (u_{+}k b_1) \theta_{1} L^{k-1} \\
(Au)_2 L^k & = & (u_{+} k b_2) \theta_{2} L^{k-1} \\
 & \vdots\\
(Au)_{d-1} L^k & = & ( u_{+} k b_{d-1} )\theta_{d-1}  L^{k-1}.
\end{array}
\end{equation}
So the factor $L^{k-1}$ cancels out and we obtain a linear system in the $\theta_i$'s. 
 \end{proof}

\begin{example}
Let $d=3$ and $k=2$, that is, we consider the second Veronese embedding of $\bP^2$. In this case the matrix defining ${\rm Ver}(2,2)$ is $
A =\begin{pmatrix}
0 & 1 & 2 & 0  & 0& 1 \\
0 & 0 & 0 & 1  & 2& 1
\end{pmatrix}
.$ The polytope $Q={\rm conv}(A)$ is given in Figure \ref{polytope} with the labeled lattice point $a_i$ corresponding to the $i$th column of the matrix $A$. 
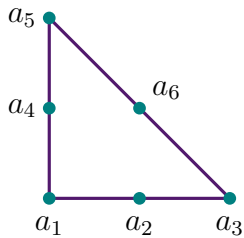
\begin{figure}[h!]

 \begin{center} \begin{tikzpicture}[scale=1.2]
\draw [pur1,very thick](0,0) -- (0,2);
\draw [pur1,very thick](0,0) -- (2,0);
\draw [pur1,very thick](2,0) -- (0,2);

\node at (0,-.3) {$a_1$};
\node at (1,-.3) {$a_2$};
\node at (2,-.3) {$a_3$};
\node at (-.3,1) {$a_4$};
\node at (-.3,2) {$a_5$};
\node at (1.3,1.2) {$a_6$};

\fill[teal] (0,0) circle[radius=2pt];
\fill[teal] (0,1) circle[radius=2pt];
\fill[teal] (1,0) circle[radius=2pt];
\fill[teal] (1,1) circle[radius=2pt];
\fill[teal] (2,0) circle[radius=2pt];
\fill[teal] (0,2) circle[radius=2pt];
\end{tikzpicture}
\end{center}\caption{The polytope $Q={\rm conv}(A)$ of ${\rm Ver}(2,2)$.} \label{polytope}
\end{figure}

By \eqref{eq:PrincipalADetProduct}, the ideal of $\Sigma_A \subset (\bC^*)^6$ is defined by the polynomial $E_A$ in \eqref{VerE_A}, \small
\begin{align} \label{VerE_A}
E_A&= \Delta_A \cdot \Delta_{[a_1\; a_4\; a_5]}\cdot \Delta_{[a_3\; a_5\; a_6]}\cdot \Delta_{[a_1\; a_2\; a_3]} \nonumber \\ 
&=\det \begin{pmatrix}
2c_{00} & {c_{10}} & {c_{01}} \\ {c_{10}} & 2c_{20} & {c_{11}} \\ {c_{01}} & {c_{11}} & 2c_{02}
\end{pmatrix}\det \begin{pmatrix}
2c_{00} & {c_{10}}  \\ {c_{10}} & 2c_{20} 
\end{pmatrix}\det\begin{pmatrix}
2c_{20} & {c_{11}}  \\ {c_{11}} & 2c_{02} 
\end{pmatrix}\det\begin{pmatrix}
2c_{00} & {c_{01}}  \\ {c_{01}} & 2c_{02} 
\end{pmatrix}.
\end{align}
\normalsize 

For generic values of $c_{ij}$ in $C$ as in equation~\eqref{eq:CmatVer(d2)}, including $c_{ij}=1$, we have that  $\mldeg(V^c) = 4=2^2$. On the other hand, if we take a scaling $c$ so that $C$ is the matrix with all entries equal to $2$, the ML degree drops to $\mldeg(V^c)=1$. See Table \ref{Vertable} for other combinations. Note that when the rank of $C$ is 1, the ML degree is 1, illustrating Theorem~\ref{Thmrank1}. We also see that the converse does not hold: the ML degree can be 1 even with the rank of $C$ equal to 3.    

\begin{table}[h!]
\centering
\resizebox{.7\linewidth}{!}{
\begin{tabular}{@{} l *8c @{}}
\toprule 
 \multicolumn{1}{c}{{\color{Ftitle} $C$}}    & {\color{Ftitle} $\Delta_A$}  &   {\color{Ftitle} $\Delta_{[a_1\; a_4\; a_5]}$ } &   {\color{Ftitle} $\Delta_{[a_3\; a_5\; a_6]}$ } &   {\color{Ftitle} $\Delta_{[a_1\; a_2\; a_2]}$ } &   {\color{line} ${\rm mldeg}$ }  \\ 
 \midrule 
   {$\begin{bmatrix} 2 & 1 & 1 \\ 1 & 2 & 1 \\ 1 & 1 & 2 \end{bmatrix}$}  & $\neq0$ & $\neq0$& $\neq0$& $\neq0$& \textbf{\color{line} 4}\vspace{2mm}\\
  $\begin{bmatrix} 2 & 2 & 1 \\ 2 & 2 & 3 \\ 1 & 3 & 2 \end{bmatrix}$  & 0 & 0& $\neq0$& $\neq0$& \textbf{\color{line} 3}\vspace{2mm}\\ 
     $\begin{bmatrix} 2 & 2 & 1 \\ 2 & 2 & 2 \\ 1 & 2 & 2 \end{bmatrix}$ & 0 & 0 & 0& $\neq0$& \textbf{\color{line} 2}\vspace{2mm}\\ 
   {\small$\begin{bmatrix} \text{-}2 & 2 & 2 \\ 2 & \text{-}2 & 2 \\ 2 & 2 & \text{-}2 \end{bmatrix}$}  & $\neq0$ & 0& 0& 0 & \textbf{\color{line} 1}\vspace{2mm}\\ 
   {\small$\begin{bmatrix} 17 & 22 & 27 \\ 22 & 29 & 36 \\ 27 & 36 & 45 \end{bmatrix}$} & 0 & $\neq0$ & $\neq0$ & $\neq0$& \textbf{\color{line} 3}\vspace{2mm}\\ 
  $\begin{bmatrix} 2 & 3 & 3 \\ 3 & 5 & 5 \\ 3 & 5 & 5 \end{bmatrix}$ & 0 & $\neq0$ & 0& $\neq0$& \textbf{\color{line} 2}\vspace{2mm}\\ 
  $\begin{bmatrix} 2 & 2 & 2 \\ 2 & 2 & 2 \\ 2 & 2 & 2 \end{bmatrix}$ & 0 & 0 & 0& 0 & \textbf{\color{line} 1}\vspace{2mm}\\ 
\bottomrule
 \end{tabular}}\vspace{1mm}
\caption{The ML degree of $({\rm Ver}(2,2)^c)$ for different scalings $c_{ij}$ in the matrix $C$.} \label{Vertable}
 \end{table}
 
\end{example}

\section{Segre Embeddings}\label{sec:segre}

In this section, we study the maximum likelihood degree of the scaled Segre embedding $V^c$. 
We give a sufficient condition for $V^c$ to have ML degree one.

For $c\in (\bC^*)^{mn}$, let $\psi^c$ be the map defined by 
\begin{equation}
  \psi^c\left(s,\theta^{(1)}_1, \ldots, \theta^{(1)}_m, \theta^{(2)}_1, \ldots, \theta^{(2)}_n\right) = \left(c_{11} s \theta^{(1)}_1 \theta^{(2)}_1, \ldots, c_{ij} s \theta^{(1)}_i \theta^{(2)}_j, \ldots, c_{mn} s \theta^{(1)}_m \theta^{(2)}_n\right)
\end{equation}
Then $V^c$ is a scaled Segre embedding of $\mathbb{P}^{m-1}\times \mathbb{P}^{n-1}$. In terms of the coordinates $p_{ij}$ with $1 \leq i \leq m$ and $1 \leq j\leq n$ on $\mathbb{P}^{mn-1}$, the defining ideal $I^c$ of $V^c$ is given by the 2-minors of the $m\times n$ matrix with $ij$th entry $\frac{p_{ij}}{c_{ij}}$. 

From this it follows that the likelihood equations (\ref{eqlik3}) are
\begin{equation*}
  \frac{u_{i+}}{u_{++}} = \frac{p_{i+}}{p_{++}} \,\text{ for }\, 1 \leq i \leq m,  
  \quad
  \frac{u_{+j}}{u_{++}} = \frac{p_{+j}}{p_{++}} \,\text{ for }\, 1 \leq j \leq n, \quad
  \frac{p_{ij}p_{kl}}{c_{ij}c_{kl}} = \frac{p_{il}p_{kj}}{c_{il}c_{kj}}.
\end{equation*}
Here $u_{i+} = \sum_{j=1}^n u_{ij}$, $u_{+j} = \sum_{i=1}^m u_{ij}$, and $u_{++} = \sum_{i=1}^m \sum_{j=1}^n u_{ij}$. We define $p_{i+}$, $p_{+j}$, and $p_{++}$ analogously.

For all $c\in(\bC^*)^{mn}$, we have  $\deg(V^c) =  \binom{m+n-2}{m-1}$. 
If $c = (1,1,\ldots,1,1)$, then the ML degree of $V^{c}$ is well known to be one (see~\cite[Example 1.12]{PS05} and~\cite[Example 2.1.2]{DSS09}). 
Also the principal $A$-determinant for Segre embedding is easy to describe \cite[Chapter 9, Section 1]{GKZ94}:
\begin{equation}
 E_A = \prod_{i=1}^{\min(m,n)} \prod_{{1 \leq a_1 < \cdots < a_i \leq m}\atop{ 1 \leq b_1 < \cdots < b_i \leq n}} \mathrm{det}[a_1, \ldots, a_i; b_1, \ldots, b_i]\label{eq:EASegre}
\end{equation}
where $[a_1, \ldots, a_i; b_1,\ldots, b_i]$ is the submatrix of $c$ indexed by the rows $1 \leq a_1 < \cdots < a_i \leq m$ and 
the columns $1 \leq b_1 < \cdots < b_i \leq n$. 
 
\begin{example}\label{3m3n} Let $V^c$ be the scaled Segre variety with scaling
\begin{equation*}
c=
\left( \begin{array}{ccc}
1 & c_{12} & c_{13} \\
c_{21} & c_{22} & 1 \\
1 & 1 & 1 \end{array} \right) \in M_{3,3}(\mathbb{C}^*).
\end{equation*}
Here, $m=n=3$, and the matrix $A$ is given by
\begin{equation*}
A=
\left( \begin{array}{ccccccccc}
1 & 1 & 1 & 0 & 0 & 0 & 0 & 0 & 0 \\
0 & 0 & 0 & 1 & 1 & 1 & 0 & 0 & 0 \\
0 & 0 & 0 & 0 & 0 & 0 & 1 & 1 & 1 \\
1 & 0 & 0 & 1 & 0 & 0 & 1 & 0 & 0 \\
0 & 1 & 0 & 0 & 1 & 0 & 0 & 1 & 0 \\
0 & 0 & 1 & 0 & 0 & 1 & 0 & 0 & 1 
\end{array} \right).
\end{equation*}

The degree of $V^c$ is $\binom{3+3-2}{3-1}=6$. The assignments
\begin{equation*}
(c_{12},c_{13},c_{21},c_{22}) \in \{(1,1,1,1),\ (2,1,1,1),\ (2,3,1,1),\ (2,3,1,2),\ (2,3,2,1),\ (2,3,2,3)\}
\end{equation*}
produce, respectively, the ML degrees $\mldeg(V^c)=1,2,3,4,5,6$. 
This shows $V^c$ can have any ML degree $i$, for $1 \leq i \leq \operatorname{deg}(V^c)$. 
\end{example}

\begin{conjecture} For each $i \in \{1,\ldots,\deg(\V)\}$, there exists $c \in (\bC^*)^{mn}$ such that the scaled Segre variety $V^c$ has ML degree $i$.
\end{conjecture}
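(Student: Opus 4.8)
The plan is to recast the conjecture as an intermediate-value statement for the number of critical points that escape into the bad locus. First I would record the exact count underlying Theorem~\ref{mainthm}: for generic $u$ the linear equations \eqref{eqlik3} meet $V$ transversally in exactly $D:=\deg(V)=\binom{m+n-2}{m-1}$ reduced points (Bertini and B\'ezout, as in the proof of Corollary~\ref{cor:mldegIneq}), and therefore
$$\mldeg(V^c)\;=\;D-k(c),$$
where $k(c)$ counts those of the $D$ points that lie on $V(p_1\cdots p_n L_f(p))$. The two extreme values are immediate: $k=0$ for generic $c$ (Corollary~\ref{cor:mldegIneq}), giving $\mldeg=D$; and $k=D-1$ whenever $c$ has rank one, since then $c_{ij}=a_ib_j$ can be absorbed into the monomial parametrization, reducing $V^c$ to the unscaled Segre variety of ML degree $1$. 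The whole content of the conjecture is thus that $k(c)$ realizes every intermediate value in $\{1,\dots,D-2\}$ as well.

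To attack this I would set up a deformation argument. Fixing generic $u$ and varying $c$, the $D$ solutions are the fibres of a branched cover over the space of scalings, holomorphic away from a branch locus; a solution enters $V(p_1\cdots p_n L_f(p))$ exactly when $c$ crosses $\Sigma_A$. Here the Segre factorization \eqref{eq:EASegre} is decisive: the irreducible components of $\Sigma_A$ are the vanishing loci of the individual minors $\det[a_1,\dots,a_i;b_1,\dots,b_i]$ of the matrix $(c_{ij})$, each being the discriminant $\nabla_{A\cap\Gamma}$ of a face $\Gamma=\Delta_I\times\Delta_J$ of the product polytope. The key local claim is that at a generic smooth point of a single such component the crossing is \emph{simple}: exactly one of the $D$ solutions lies in the bad locus, so that $\mldeg(V^c)=D-1$ there. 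For the top component $\Delta_A$ this is the standard reading of the $A$-discriminant as parametrizing one extra critical point on $\{L_f=0\}$; for a proper-face discriminant it is the same statement applied to the sub-Segre variety $V_\Gamma$, whose escaped critical point sits on the coordinate subspace cut out by $\Gamma$.

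To reach every intermediate value I would then construct a flag of scalings $c^{(0)},c^{(1)},\dots,c^{(D-1)}$ with $c^{(r)}$ a generic point of a codimension-$r$ stratum of $\Sigma_A$, obtained by imposing $r$ of the minor conditions transversally, and prove by an inductive local analysis that passing from $c^{(r)}$ to $c^{(r+1)}$ forces exactly one more solution into the bad locus, so that $k(c^{(r)})=r$ and $\mldeg(V^{c^{(r)}})=D-r$. This matches, and would explain uniformly, the ladder of values $1,\dots,6$ exhibited by hand in Example~\ref{3m3n}.

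The hard part will be the transversality bookkeeping in the last step: one must show that the minor conditions can be imposed one at a time so that each successive stratum is nonempty inside $(\bC^*)^{mn}$, that the escaped solutions stay \emph{distinct} and never re-enter the torus as further conditions are added, and above all that each crossing loses \emph{exactly} one solution rather than several at once. Establishing this uniformly for all sizes $(m,n)$ — instead of case by case — is the essential difficulty, and I expect it to require a careful study of the local behaviour of the balancing form of the likelihood system, $x_i(Cy)_i=\lambda u_{i+}$ and $y_j(C^{T}x)_j=\lambda u_{+j}$ with $C=(c_{ij})$, as the scaling matrix degenerates along the prescribed minor loci.
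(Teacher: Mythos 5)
First, a point of calibration: this statement is a \emph{conjecture} in the paper, not a theorem. The authors offer no proof --- only computational evidence, namely Example~\ref{3m3n} (explicit scalings realizing ML degrees $1$ through $6$ for the $3\times 3$ Segre) and Example~\ref{4waySegre}. So your proposal cannot be compared against a proof in the paper; it has to stand on its own as an argument, and it does not.

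The central gap is the ``key local claim'' and its inductive extension: that crossing one irreducible component of $\Sigma_A$ generically costs exactly one solution, and that a generic point of a codimension-$r$ stratum obtained by imposing $r$ minor conditions has $k(c)=r$, hence $\mldeg(V^c)=D-r$. This additive picture is exactly the open problem, and the paper's own computations indicate it is false as stated. In Table~\ref{Vertable} (for ${\rm Ver}(2,2)$, where the same product structure \eqref{eq:PrincipalADetProduct} of $E_A$ holds), there is a scaling on which \emph{two} discriminant components vanish ($\Delta_A$ and $\Delta_{[a_1\,a_4\,a_5]}$) yet the ML degree is $3$, a drop of only $1$; another scaling has $\Delta_A\neq 0$ but three face discriminants vanishing and ML degree $1$, a drop of $3$. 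The authors say outright, in the binary $4$-cycle discussion, that they ``do not understand how different discriminants interact''; your induction presupposes an answer to precisely that question. There are two further structural problems. (i) The flag of strata you want cannot be built by imposing minors ``one at a time transversally'': the minors of $(c_{ij})$ are not independent hypersurfaces --- for a matrix with nonzero entries, the vanishing of $\det[12;12]$ and $\det[12;13]$ already forces $\det[12;23]=0$ --- and the stratum you need at the bottom (codimension $D-1$) need not exist: for $m=n=3$ one has $D-1=5$, while the rank-one locus, which is where ML degree $1$ is known to occur (Proposition~\ref{rank1}), has codimension only $4$ in $(\bC^*)^9$. (ii) Even granting a crossing, ``one solution enters the bad locus'' does not imply ``the ML degree drops by one'': $\mathcal{L}_{c,u}'$ has degree $D$ counted with multiplicity, and a single point of $\mathcal{L}_{c,u}'$ landing on $V(p_1\cdots p_n L_f(p))$ can absorb multiplicity greater than one, so the drop must be controlled scheme-theoretically, not by counting escaped points. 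What your write-up does establish (the two endpoint values $k=0$ and $k=D-1$, and the identification of the components of $\Sigma_A$ via \eqref{eq:EASegre}) is already in the paper (Corollary~\ref{cor:mldegIneq}, Proposition~\ref{rank1}); everything beyond that remains conjectural.
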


\begin{example}\label{4waySegre}
The scaled toric variety $V^c$ with the configuration
\begin{equation*} 
A=\left( \begin{array}{cccccccccccccccc}
1 & 1 & 1 & 1 & 1 & 1 & 1 & 1 & 0 & 0 & 0 & 0 & 0 & 0 & 0 & 0 \\
1 & 1 & 1 & 1 & 0 & 0 & 0 & 0 & 1 & 1 & 1 & 1 & 0 & 0 & 0 & 0 \\
1 & 1 & 0 & 0 & 1 & 1 & 0 & 0 & 1 & 1 & 0 & 0 & 1 & 1 & 0 & 0 \\
1 & 0 & 1 & 0 & 1 & 0 & 1 & 0 & 1 & 0 & 1 & 0 & 1 & 0 & 1 & 0 \end{array} \right)
\end{equation*}
is isomorphic to the scaled Segre embedding of $\mathbb{P}^1 \times \mathbb{P}^1 \times \mathbb{P}^1 \times \mathbb{P}^1$ into $\mathbb{P}^{15}$. The degree of $V^c$ in this instance is $24$. For each of the possible ML degrees $i=1,2,\ldots,24$, we have found a scaling vector $c \in (\bC^*)^{16}$ for which $\textrm{mldeg}(V^c)=i$. 
We see from \eqref{eq:EASegre} that for a point to lay in the vanishing of the principal A-determinant we must have that certain minors of the matrix $\left( c_{ij}\right)$ vanish; this observation informed our choice of scaling vector $c$.
\end{example}

We conclude this section with a result analogous to Theorem~\ref{Thmrank1}.
\begin{proposition}\label{rank1}
If $\mathrm{rank}(c)=1$, then $\mathrm{mldeg}(V^c)=1$.
\end{proposition}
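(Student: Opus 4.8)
The plan is to show that when $\mathrm{rank}(c)=1$ the scaled Segre variety $V^c$ is not merely isomorphic to, but literally equal to, the unscaled Segre variety $V^{(1,\ldots,1)}$ as a subvariety of $\mathbb{P}^{mn-1}$ in its standard coordinates $p_{ij}$; since the latter is already known to have ML degree one, the claim follows at once. The starting observation is that a rank-one $m\times n$ matrix all of whose entries are nonzero factors as an outer product: there exist $\alpha=(\alpha_1,\ldots,\alpha_m)\in(\mathbb{C}^*)^m$ and $\beta=(\beta_1,\ldots,\beta_n)\in(\mathbb{C}^*)^n$ with $c_{ij}=\alpha_i\beta_j$ for all $i,j$.

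Next I would trace the effect of this factorization on the defining ideal. Recall $I^c$ is generated by the $2\times 2$ minors of the matrix with $ij$th entry $p_{ij}/c_{ij}$, so a typical generator indexed by rows $i<k$ and columns $j<l$ is $\tfrac{p_{ij}p_{kl}}{c_{ij}c_{kl}}-\tfrac{p_{il}p_{kj}}{c_{il}c_{kj}}$. Substituting $c_{ij}=\alpha_i\beta_j$, the product $\alpha_i\alpha_k\beta_j\beta_l$ appears in both denominators, so this generator equals $\tfrac{1}{\alpha_i\alpha_k\beta_j\beta_l}\bigl(p_{ij}p_{kl}-p_{il}p_{kj}\bigr)$, a nonzero scalar multiple of the corresponding $2\times 2$ minor of $(p_{ij})$. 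Hence the generators of $I^c$ and of the ideal of the ordinary Segre variety agree up to units, giving $I^c=I^{(1,\ldots,1)}$ and therefore $V^c=V^{(1,\ldots,1)}$. Because the ML degree is computed from the likelihood function $\ell_u$ in the fixed coordinates $p_{ij}$, and the two varieties are identical subvarieties of $\mathbb{P}^{mn-1}$, their critical loci coincide, so $\mathrm{mldeg}(V^c)=\mathrm{mldeg}(V^{(1,\ldots,1)})=1$.

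I expect no serious obstacle: the whole argument rests on recognizing the rank-one factorization of $c$ and the cancellation it induces in the $2\times 2$ minors. The only point requiring care is confirming that the scalar factors are genuine units (guaranteed since $\alpha_i,\beta_j\in\mathbb{C}^*$), so that the ideals are actually equal rather than merely sharing a zero set. As an alternative that avoids the ideal comparison, one can argue directly from the displayed likelihood equations of the section: under $c_{ij}=\alpha_i\beta_j$ the relations $\tfrac{p_{ij}p_{kl}}{c_{ij}c_{kl}}=\tfrac{p_{il}p_{kj}}{c_{il}c_{kj}}$ collapse to $p_{ij}p_{kl}=p_{il}p_{kj}$, forcing $(p_{ij})$ itself to have rank one, after which the marginal equations $\tfrac{p_{i+}}{p_{++}}=\tfrac{u_{i+}}{u_{++}}$ and $\tfrac{p_{+j}}{p_{++}}=\tfrac{u_{+j}}{u_{++}}$ pin down the single solution $p_{ij}\propto u_{i+}u_{+j}$, exhibiting exactly one critical point and confirming $\mathrm{mldeg}(V^c)=1$.
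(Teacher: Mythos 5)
Your proposal is correct and takes essentially the same route as the paper: the paper observes that $\mathrm{rank}(c)=1$ forces $c_{ij}c_{kl}=c_{il}c_{kj}$, so the defining minor equations among the likelihood equations reduce to $p_{ij}p_{kl}=p_{il}p_{kj}$, i.e.\ to the likelihood equations of the unscaled Segre embedding, whose ML degree is known to be one. Your outer-product factorization $c_{ij}=\alpha_i\beta_j$ is just a repackaging of that same cancellation — upgraded to the (valid, mildly stronger) statement that $V^c$ and $V^{(1,\ldots,1)}$ coincide as subvarieties of $\mathbb{P}^{mn-1}$ — and your ``alternative'' argument at the end is precisely the paper's proof, with the unique solution $p_{ij}\propto u_{i+}u_{+j}$ written out explicitly.
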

\begin{proof} If $\operatorname{rank}(c) = 1$ then all the 2-minors of $c$ vanish. This implies that $c_{ij}c_{kl} = c_{il}c_{kj}$ for all $1 \leq i, k \leq m$ and $1 \leq j,l \leq n$. Then our likelihood equations reduce to 
\begin{equation*}
  \frac{u_{i+}}{u_{++}} = \frac{p_{i+}}{1} \,\text{ for }\, 1 \leq i \leq m 
  \quad
  \frac{u_{+j}}{u_{++}} = \frac{p_{+j}}{1} \,\text{ for }\, 1 \leq j \leq n, \quad
  {p_{ij}p_{kl}} = {p_{il}p_{kj}},
\end{equation*}
which are the likelihood equations for the unscaled Segre embedding of $\mathbb{P}^{m-1} \times \mathbb{P}^{n-1}$. This is known to have ML degree equal to one. 
\end{proof}

\section{Hierarchical log-linear and graphical models}\label{sec:graphical}

A large class of discrete exponential models that are used in statistical practice are hierarchical log-linear models \cite{BFH07} and undirected graphical models \cite{Lau96}.
In both cases, the toric variety is constructed via 
a parametrization based on a simplicial
complex where the vertices of the simplicial complex correspond to discrete random variables. 
In the second case, the simplicial complex is the clique complex of
a given graph. 
These toric varieties, their defining ideals, and the polyhedral
geometry of  $Q = \conv(A)$ of the underlying matrix $A$ have been
intensely studied starting with \cite{HS02} for the hierarchical log-linear
models and \cite{GMS06} for graphical models. 

Decomposable models form a subclass of these models, and they
present attractive factorization properties. For instance, a graphical
model is decomposable if and only if the underlying graph is a chordal
graph \cite{Lau96}.
In particular, 
we have the following. 
\begin{theorem}\cite[Theorem 4.4]{GMS06} A hierarchical log-linear model, and
hence an undirected graphical model, is decomposable if and only if the ML
degree of the corresponding toric variety is one.
\end{theorem}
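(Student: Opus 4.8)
The plan is to prove the sharper statement that the ML degree of the toric variety attached to a hierarchical log-linear model equals one if and only if the maximum likelihood estimate $\hat p$ is a rational function of the data vector $u$, and then to identify this latter condition with decomposability. The equivalence ``$\mldeg = 1 \Leftrightarrow$ rational MLE'' comes from combining Definition~\ref{MLdeg} with Birch's Theorem~\ref{thm-Birch}: for generic $u$ there is a single complex critical point, which by Theorem~\ref{thm-Birch} is the unique positive solution of the likelihood equations and therefore depends algebraically and single-valuedly — hence rationally — on $u$; conversely a rational solution map forces exactly one critical point. The passage from hierarchical log-linear models to undirected graphical models is then automatic, since a graphical model is the hierarchical model of the clique complex of its graph, and a graph is chordal (equivalently, its clique complex is decomposable) precisely when the graphical model is decomposable \cite{Lau96}. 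So once the statement is established for hierarchical models, the graphical case follows by specialization.

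For the forward direction (decomposable $\Rightarrow \mldeg = 1$) I would invoke the classical closed-form estimate for decomposable models. Writing the decomposable complex in terms of its maximal cliques $C_1,\dots,C_r$ and separators $S_2,\dots,S_r$ read off from a perfect elimination ordering, the estimate factors as
\[
\hat p(x) \;=\; \frac{\prod_{i=1}^{r} \bigl( u_{C_i}(x_{C_i})/N \bigr)}{\prod_{j=2}^{r} \bigl( u_{S_j}(x_{S_j})/N \bigr)},
\]
a ratio of products of empirical marginals. This expression is manifestly a rational function of $u$, it satisfies the marginal-matching constraints encoded by $(Au)$ in the likelihood equations \eqref{eqlik3}, and it is positive for positive data; by the uniqueness in Theorem~\ref{thm-Birch} it is the MLE, so $\mldeg = 1$.

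The converse (ML degree $1 \Rightarrow$ decomposable) is the substantive part and is where I expect the real difficulty. I would argue the contrapositive: a non-decomposable complex contains an induced non-decomposable obstruction — for a graph, a chordless cycle of length at least four — and I must show its presence forces $\mldeg > 1$. The plan is a reduction step followed by a base case. For the reduction, restricting to the face of the marginal polytope $Q = \conv(A)$ supported on the obstruction produces a toric subvariety $V_\Gamma$ exactly as in the proof of Theorem~\ref{mainthm}; I would show that a rational MLE for the full model would induce one for this sub-model after marginalizing the remaining variables, so that it suffices to contradict rationality, i.e.~to establish $\mldeg > 1$, on the minimal obstruction. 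For the base case one checks directly that every chordless cycle has ML degree strictly greater than one: the binary $4$-cycle of Example~\ref{ex:binary-4-cycle}, with $\mldeg = 13$, is the smallest instance, and the general chordless cycle of arbitrary length and arbitrary state sizes must be handled by an analogous but more involved count. The main obstacle is precisely this converse — making the inheritance of rationality under marginalization and restriction to polytope faces rigorous, and then ruling out ML degree one for the entire family of minimal obstructions, not merely the $4$-cycle. An alternative to the combinatorial base case would be to feed the structure of the problem into Huh's classification of varieties with ML degree one \cite{Huh14} and verify that the toric varieties of non-decomposable complexes fail its criterion.
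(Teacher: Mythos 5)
First, note that the paper does not prove this statement at all: it is quoted verbatim, with citation, from \cite[Theorem 4.4]{GMS06}, so there is no in-paper argument to compare yours against. Your proposal therefore has to stand on its own, and as written it has genuine gaps.

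The decisive one is in the converse (ML degree one $\Rightarrow$ decomposable): your obstruction theory is the graphical-model one, but the theorem is about hierarchical log-linear models, of which graphical models are only a special case. Decomposability of a hierarchical model is a property of its simplicial complex, namely that it be the clique complex of a chordal graph; non-decomposability therefore arises in two ways: a chordless cycle of length at least four in the $1$-skeleton, \emph{or} failure of conformality, whose minimal instance is the empty triangle $\{12,13,23\}$. The latter is exactly the no-three-way interaction model treated in Section~\ref{sec:graphical} of this paper: it is not decomposable, its $1$-skeleton is a triangle (chordal, with no chordless $4$-cycle anywhere), and its ML degree is $3$, not $1$. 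A reduction that only hunts for chordless cycles of length $\geq 4$ can never reach a contradiction for this model, so your contrapositive argument fails on an entire family of non-decomposable models; the empty simplices (and their higher-dimensional analogues) must be included among the base cases, and none of them is actually verified in the proposal --- even the cycle base case beyond the binary $4$-cycle is deferred.

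Two further steps are asserted rather than proven. First, you identify ``restricting to the face of $Q=\conv(A)$ supported on the obstruction'' with ``marginalizing the remaining variables.'' These are different operations: marginalization is a coordinate projection of the polytope (summing entries of $p$), while faces of $Q$ correspond to setting coordinates $p_i=0$, i.e., conditioning on a sub-event; and in either case the inheritance of ML degree one by the sub-model needs an argument, since data concentrated on a face is not generic data for the ambient model, so a rational MLE formula for the big model does not obviously restrict. Second, the equivalence $\mldeg(V)=1 \Leftrightarrow$ rational MLE: the direction your forward implication relies on --- that a rational formula for the positive critical point forces exactly one \emph{complex} critical point --- does not follow from Birch's Theorem~\ref{thm-Birch}, which controls only positive real solutions, whereas $\mldeg$ counts all complex critical points off $\cH$. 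The standard way to close this is the irreducibility of the likelihood correspondence \cite{CHKS06,HS14}: the graph of a rational MLE is then dense in the whole correspondence, forcing the data projection to be birational. Without that ingredient, both halves of your step~1, and hence both directions of the theorem, remain open.
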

For example, the (unscaled) Segre embedding of $\bP^{m_1} \times \cdots \times \bP^{m_k}$
is a decomposable graphical model, and hence has ML degree one. In this section we consider some examples of nondecomposable models. First we study a family of examples known as no-three-way interaction models.
\subsection{No-three-way interaction models}
\begin{example}[Binary 3-cycle] Consider the hierarchical log-linear model arising from the $3$-cycle depicted in 
Figure~\ref{fig:binary-3-cycle}.  This is also known as the no-three-way interaction model. We let the random 
variables $X,Y,Z$ corresponding to the vertices to be binary variables.
One parametrization $\psi:(\bC^*)^6 \to (\bC^*)^8$ is given by the matrix \[
A = \left( \begin{array} {c c c c c c c c}
0 & 0 & 0 & 0 & 1 & 1 & 1 & 1 \\
0 & 0 & 1 & 1 & 0 & 0 & 1 & 1 \\
0 & 1 & 0 & 1 & 0 & 1 & 0 & 1 \\
0 & 0 & 0 & 0 & 0 & 0 & 1 & 1 \\
0 & 0 & 0 & 1 & 0 & 0 & 0 & 1 \\
0 & 0 & 0 & 0 & 0 & 1 & 0 & 1 
\end{array}\right).
\]
We see that the $6 \times 8$ matrix $A$ is in general position
as in Section \ref{sec:hypersurface}. Hence, the toric variety $\V$ defined by $A$ is a hypersurface with a generator of full support. 
For $i,j,k \in \{0,1\}$, let $p_{ijk} = \mathrm{prob}(X = i, Y = j, Z = k)$.  Labeling the columns of the matrix with the $p_{ijk}$ in increasing lexicographic order, we find that the unique toric generator is \[g = p_{000} p_{011} p_{101} p_{110} - p_{001}p_{010}p_{100}p_{111}.\] By Corollary \ref{cor-hypersurface},  the principal $A$-determinant is equal to the $A$-discriminant, and it is \[\Delta_A = c_{000} c_{011} c_{101} c_{110} - c_{001}c_{010}c_{100}c_{111}.\]  

The toric variety itself has degree $4$ since it has a single degree $4$ generator. However, if we choose the scaling coefficients $c_{ijk}=1$ for all $i,j,k \in \{0,1\}$, we see that this scaling vector lies in $\LocusEAf$, and thus $\mldeg(\V^c)$ drops, in this case, to $3$.
In fact, for this example, the only possibilities for $\mldeg(\V^c)$ are $3$ when the scaling vector lies in $\LocusEAf$ and $4$ when the scaling vector does not.

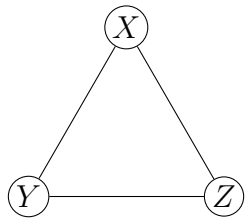
\begin{figure}[!h]
\centering
\begin{tikzpicture}[scale=.5]
\def \radius {3cm}
\def \n {3}
\foreach \v in {1,...,\n} {
\node(v\v) at ({360/\n*\v - 30}:\radius) {};
}
\node[circle,inner sep = 1pt,draw](X) at (v1) {$X$};
\node[circle,inner sep = 1pt,draw](Y) at (v2) {$Y$};
\node[circle,inner sep = 1pt,draw](Z) at (v3) {$Z$};
\draw (X) -- (Y) -- (Z) -- (X);
\end{tikzpicture}
\caption{The 3-cycle.}\label{fig:binary-3-cycle}
\end{figure}
\end{example}

\begin{proposition} The ML degree of the binary $3$-cycle is $4$ unless $c \in (\bC^*)^{d+1}$ is 
in $\LocusEAf$. If $c \in \LocusEAf$, then $\mldeg(\V^c) = 3$. 
\end{proposition}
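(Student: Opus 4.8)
The plan is to split along the dichotomy supplied by Theorem~\ref{mainthm} and then to pin down the exact value $3$ in the degenerate case by a transversality argument at one explicitly located point. If $c \notin \Sigma_A$, then Corollary~\ref{cor:mldegIneq} and Theorem~\ref{mainthm} give $\mldeg(V^c) = \deg(V) = 4$ at once, since the toric generator $g = p_{000}p_{011}p_{101}p_{110} - p_{001}p_{010}p_{100}p_{111}$ has degree $4$. If $c \in \Sigma_A$, the same theorem yields $\mldeg(V^c) < 4$, so the whole content is to rule out $\mldeg(V^c) \le 2$.

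For the degenerate case I would first locate the ``lost'' solution exactly. The seven linear forms $L_f, L_1, \ldots, L_6$ have coefficient vectors obtained from the rows of $A'$ (the matrix $A$ with a row of ones appended) by scaling coordinatewise by $c$; since $c \in (\bC^*)^8$ and $A'$ has full rank $7$, these forms are linearly independent and vanish simultaneously at a single point $P_0 \in \bP^7$. Solving the system amounts to $\mathrm{diag}(c)\,P_0 \in \ker A'$, and since $\ker A'$ is spanned by $w = (1,-1,-1,1,-1,1,1,-1)$ (the kernel generator of Section~\ref{sec:hypersurface}), one gets $P_0 = (w_1/c_1, \ldots, w_8/c_8)$. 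A one-line substitution then shows $g(P_0) = (c_{000}c_{011}c_{101}c_{110})^{-1} - (c_{001}c_{010}c_{100}c_{111})^{-1}$, so $P_0 \in V$ precisely when $\Delta_A(c) = 0$, i.e.\ when $c \in \Sigma_A$; and every coordinate $w_i/c_i$ is nonzero, so $P_0$ always lies in the torus and is therefore a smooth point of $V$.

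Next I would study the intersection defined by the likelihood equations \eqref{eqlik3}. For generic $u$ the six forms $(Au)_i L_f - u_+ L_i$ are independent, so \eqref{eqlik3} cuts out a line $\ell_u \subset \bP^7$; because $L_f(P_0) = L_i(P_0) = 0$, this line passes through $P_0$ for every $u$, and by Bézout $\mathcal{L}_{c,u}' = \ell_u \cap V$ has length $4$. The geometric heart is that $P_0$ is a point where the hyperplane $\{L_f = 0\}$ is tangent to $V$: the equations $L_i(P_0)=0$ say exactly that $\{L_f = 0\}$ contains $T_{P_0}V$. On the other hand $\ell_u \not\subseteq \{L_f=0\}$, for otherwise every $L_i$ would vanish identically on $\ell_u$ and collapse it to the single point $P_0$. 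Hence $\ell_u \not\subseteq T_{P_0}V$, so $\ell_u$ meets the smooth point $P_0$ of $V$ transversally, contributing intersection multiplicity exactly $1$.

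Finally I would assemble the count. The point $P_0$ is the \emph{only} member of $\mathcal{L}_{c,u}'$ on $\{L_f=0\}$, since any such point is a common zero of $L_f,L_1,\ldots,L_6$ and hence equals $P_0$; and because $A$ is in general position its proper faces are simplices, so the face discriminants are trivial and, for generic $u$, no solution lies on a coordinate hyperplane. Thus the excluded locus $V(p_1\cdots p_n L_f)$ meets $\mathcal{L}_{c,u}'$ in the single reduced point $P_0$, leaving $\mldeg(V^c) = 4 - 1 = 3$. The main obstacle is the genericity bookkeeping: verifying that the residual length-$3$ subscheme $\mathcal{L}_{c,u} = \mathcal{L}_{c,u}' \setminus \{P_0\}$ is reduced, lies in the torus, and avoids $\{L_f=0\}$. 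This I would obtain from a Bertini argument applied to the pencil of lines through the fixed point $P_0$ swept out as $u$ varies; the transversality computation at $P_0$ is what forces the drop to be exactly $1$ rather than more.
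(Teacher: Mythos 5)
Your proof is correct, but it takes a genuinely different route from the paper's. The paper's argument is computational: it fixes the random data vector $u=(2,3,5,7,11,13,17,19)$, adjoins $\Delta_A$ and the likelihood equations to the toric ideal while keeping the $c_{ijk}$ symbolic, and computes a lexicographic Gr\"obner basis in Macaulay2; the shape of that basis (one cubic in $p_{111}c_{111}$, all other leading terms linear in the $p_{ijk}$) certifies that the system has exactly three solutions whenever $c\in\Sigma_A\cap(\bC^*)^8$. You instead argue geometrically: you solve the seven independent linear forms (coefficient matrix $A'\operatorname{diag}(c)$, rank $7$) to locate the unique candidate ``lost'' solution $P_0=(w_i/c_i)$, check that $g(P_0)=0$ exactly when $\Delta_A(c)=0$, so that $P_0$ is a smooth torus point of $V$ precisely in the degenerate case, identify $\{L_f=0\}$ as the tangent hyperplane $T_{P_0}V$ (equivalently, $\nabla g(P_0)\propto c$ on the discriminant), and then combine B\'ezout along the line $\ell_u$ with the transversality $\ell_u\not\subseteq T_{P_0}V$ to see that the intersection multiplicity at $P_0$ is exactly one, so exactly one of the four intersection points is discarded. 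Your remaining bookkeeping is also sound: the general-position/simplex-face observation rules out solutions on coordinate hyperplanes for generic $u$ (reusing the face analysis inside the proof of Theorem~\ref{mainthm}), and the reducedness of the residual three points follows from your projection-from-$P_0$ argument, since projection from a smooth point of a quartic hypersurface is generically finite of degree three and, in characteristic zero, has reduced generic fibers avoiding any fixed divisor --- though note this is a six-parameter family of lines, not a pencil, a minor terminological slip. What your approach buys: it is uniform in $c$ and $u$, needs no machine computation, explains conceptually why the drop is exactly one (the discriminantal scaling creates a single simple tangency of $\{L_f=0\}$ with $V$), and generalizes verbatim to any toric hypersurface with $A$ in general position, suggesting $\mldeg(V^c)=\deg(V)-1$ for all $c\in\Sigma_A\cap(\bC^*)^n$ in that setting. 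What the paper's approach buys: a short, explicitly verifiable certificate whose Gr\"obner basis displays the solution structure concretely.
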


\begin{proof} We fix the lexicographic monomial ordering \[p_{000} > p_{001} > p_{010} > \cdots > p_{111} > c_{000} > c_{001} > \cdots > c_{111}\] over $\bQ[p_{000},p_{001},p_{010},p_{011},p_{100},p_{101},p_{110},p_{111},c_{000},c_{001},c_{010},c_{011},c_{100},c_{101},c_{110},c_{111}]$. 

Let $u = (u_{000},\ldots,u_{111})$ be a data vector in $\bZ^8$ and let $u_+ = \sum u_{ijk}$. Consider \[I = \left\langle\,  g \, , \, \Delta_A \, , \, L_f(p) - 1 \, , \,  u_+L_1(p) - (Au)_1 \, , \, \ldots \, , \, u_+ L_6(p) - (Au)_6 \, \right\rangle
\]
where $f = \sum c_{ijk} p_{ijk}$.
Using \cite{M2} and the random data vector $u=(2,3,5,7,11,13,17,19)$, we calculate a Gr\"{o}bner basis for $I$:
\begin{align*}
g_1 &= c_{000} c_{011} c_{101} c_{110}-c_{001} c_{010} c_{100} c_{111} \\
g_2 &= 5021863 p_{111}^{3} c_{111}^{3}-3752210 p_{111}^{2} c_{111}^{2}+ 984280 p_{111} c_{111}-89856 \\
g_3 &= 77 p_{110} c_{110}+77 p_{111} c_{111}-36 \\
g_4 &= 17472 p_{110} c_{001} c_{010} c_{100}
		-456533 p_{111}^{3} c_{000} c_{011} c_{101} c_{111}^{2}
		+341110 p_{111}^{2} c_{000} c_{011} c_{101} c_{111} + \cdots \\
g_5 &= 77 p_{101} c_{101}+77 p_{111} c_{111}-32 \\
g_6 &= 19656 p_{101} c_{001} c_{010} c_{100}
	-456533 p_{111}^{3} c_{000} c_{011} c_{110} c_{111}^{2}
	+341110 p_{111}^{2} c_{000} c_{011} c_{110} c_{111} + \cdots \\
g_7 &= 77 p_{100} c_{100}-77p_{111} c_{111}+8 \\
g_8 &= 184320 p_{100} c_{000} c_{011}
	-73501813 p_{101} p_{110} p_{111}^{2} c_{001} c_{010} c_{111}^{3}
	+ \cdots \\
g_9 &= 77 p_{011} c_{011}+77 p_{111} c_{111}-26 \\
g_{10} &= 359424 p_{011} c_{001} c_{010}
		-115502849 p_{100} p_{111}^{3} c_{000} c_{101} c_{110} c_{111}^{2} + \cdots \\
g_{11} &= 11 p_{010} c_{010}-11 p_{111} c_{111}+2 \\
g_{12} &= 287539200 p_{010} c_{000}
		-1097794317389 p_{011} p_{101} p_{110} p_{111}^{2} c_{001} c_{100} c_{111}^{3} + \cdots \\
g_{13} &= 77 p_{001} c_{001}-77 p_{111} c_{111}+16 \\
g_{14} &= 4313088 p_{001} c_{000}
		-12619941719 p_{011} p_{101} p_{110} p_{111}^{2} c_{010} c_{100} c_{111}^{3} + \cdots \\
g_{15} &= 1725235200 p_{000}
		-59243384844259 p_{001} p_{010} p_{100} p_{111}^{3} c_{011} c_{101} c_{110} c_{111}^{2} + \cdots \\
\end{align*}
 As long as each $c_{ijk} \in \bC^*$ and satisfies the equation $g_1 = \Delta_A = 0$, we see that $g_2$ is a univariate polynomial in $p_{111}$ of degree $3$, and the initial terms of $g_3$ through $g_{15}$ have degree $1$ in $p_{ijk}$.  Therefore, when the coefficient vector lies in $\LocusEAf$, the ML degree is always $3$.
\end{proof}

More generally, we consider the no-three-way interaction model $C_m$ based on the $3$-cycle with one $m$-ary variable and two binary variables. That is, for $i \in \{0,\ldots, m-1\}$ and $j,k \in \{0,1\}$, we set 
\[
p_{ijk} = \mathrm{prob}(X = i, Y = j, Z = k) = a_{ij}b_{ik}c_{jk}.  
\]
One can compute the normalized volume of $Q = \conv(A_m)$ for the  matrix $A_m$ 
associated $C_m$. This gives a formula for the degree of $C_m$.
\begin{proposition} The degree of $C_m$ is $m2^{m-1}$.
\end{proposition}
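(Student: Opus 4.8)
The plan is to compute $\deg(C_m)$ as the normalized volume of the lattice polytope $Q = \conv(A_m)$, exploiting a fibration of $Q$ over the marginal simplex of the $m$-ary variable $X$. Throughout I use that the degree of a projective toric variety equals the normalized volume of its polytope, i.e.\ $(3m)!$ times the Euclidean volume measured against the lattice $\bZ A_m$ (recall $\dim Q = 3m$ for this model).

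First I would pass to convenient affine coordinates in which the $4m$ lattice points become
\[
w_{ijk} \;=\; e_i^{\gamma} + j\,e_i^{a} + k\,e_i^{b} + jk\,e^{c}, \qquad i \in \{0,\ldots,m-1\},\ j,k \in \{0,1\},
\]
in $\bR^{3m}$, with coordinate blocks $\gamma\in\bR^{m}$ recording the value of $X$, blocks $a,b\in\bR^{m}$ recording $Y,Z$ inside each $X$-slice, and a single shared coordinate $c$ recording the event $\{Y=Z=1\}$. The real content of this step is that the projection from the exponent configuration of $A_m$ onto these coordinates is an isomorphism of affine lattices: one checks that the differences $w_{ijk}-w_{000}$ are exactly the standard basis vectors, so they generate all of $\bZ^{3m}$. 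Consequently no unimodularity correction appears, and $\deg(C_m)=(3m)!\cdot\operatorname{vol}\big(Q'\big)$ where $Q'=\conv\{w_{ijk}\}$ and $\operatorname{vol}$ is ordinary Euclidean volume.

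Next I would exhibit $Q'$ as fibered over the simplex $\Delta=\{\gamma\ge 0,\ \sum_i\gamma_i=1\}$ via the projection onto the $\gamma$-block. A direct convex-combination computation shows the fiber over $\lambda\in\Delta$ is the Minkowski sum $F(\lambda)=\sum_i \lambda_i T_i$, where $T_i=\conv\{0,\,e_i^{a},\,e_i^{b},\,e_i^{a}+e_i^{b}+e^{c}\}$ is the unimodular tetrahedron attached to slice $i$. Since $\Delta$ and the $F(\lambda)$ lie in complementary coordinate subspaces, Fubini gives $\operatorname{vol}(Q')=\int_{\Delta}\operatorname{vol}_{2m+1}\!\big(F(\lambda)\big)\,d\lambda$. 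The crux is the fiber volume: slicing $F(\lambda)$ along the shared coordinate $c$ (the only direction in which distinct $T_i$ interact) yields $\operatorname{vol}_{2m+1}(F(\lambda)) = \big(\prod_i\lambda_i^{2}\big)\big(\sum_i\lambda_i\big)\,H$, with $H=\int_{[0,1]^2}\big[\min(\xi,\eta)-\max(0,\xi+\eta-1)\big]\,d\xi\,d\eta=\tfrac16$; because $\sum_i\lambda_i=1$ on $\Delta$ this collapses to $\operatorname{vol}_{2m+1}(F(\lambda))=\tfrac16\prod_i\lambda_i^{2}$.

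Finally I would integrate over $\Delta$ using the Dirichlet integral $\int_{\Delta}\prod_{i=0}^{m-1}\lambda_i^{2}\,d\lambda=\frac{(2!)^{m}}{(3m-1)!}=\frac{2^{m}}{(3m-1)!}$, so that $\operatorname{vol}(Q')=\frac{2^{m}}{6\,(3m-1)!}$ and hence $\deg(C_m)=(3m)!\operatorname{vol}(Q')=\frac{3m}{6}\,2^{m}=m\,2^{m-1}$. I expect the main obstacle to lie in the first and third steps rather than the closing arithmetic: verifying that the chosen coordinates preserve the lattice (so that the normalized volume is genuinely $(3m)!$ times the Euclidean volume with no correction), and carrying out the fiber-volume computation correctly. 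The key structural observation driving the latter is that the many private directions $e_i^{a},e_i^{b}$ force each $T_i$ to contribute its full two-dimensional $(a_i,b_i)$-area, giving the factor $\prod_i\lambda_i^{2}$, while the single shared direction $e^{c}$ contributes the factor $\sum_i\lambda_i=1$ — this is precisely what produces the coefficient $\tfrac{3m}{6}$ and therefore the $m\,2^{m-1}$ in the final formula.
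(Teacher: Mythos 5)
Your proof is correct, and it takes precisely the route the paper indicates: the paper offers no actual computation for this proposition, saying only that ``one can compute the normalized volume of $Q=\conv(A_m)$,'' so your fibration argument supplies the details the paper omits. I checked the three key steps --- the fiber of $Q$ over a point $\lambda$ of the marginal simplex is the Minkowski sum $\sum_i\lambda_i T_i$ of the tetrahedra $T_i=\conv\{0,\,e_i^a,\,e_i^b,\,e_i^a+e_i^b+e^c\}$; slicing along the shared coordinate $c$ gives fiber volume $\tfrac16\prod_i\lambda_i^2$ on the simplex; and the Dirichlet integral gives $(3m)!\cdot\tfrac16\cdot\tfrac{2^m}{(3m-1)!}=m2^{m-1}$ --- and all are sound. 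Two small corrections to the write-up, neither of which affects the argument: the ambient space of your coordinates is $\bR^{3m+1}$, with the polytope lying in the hyperplane $\sum_i\gamma_i=1$ (not $\bR^{3m}$); and the differences $w_{ijk}-w_{000}$ are \emph{not} literally standard basis vectors (e.g.\ $w_{i11}-w_{000}=e_i^\gamma-e_0^\gamma+e_i^a+e_i^b+e^c$), though they do generate the full lattice of integer vectors in the linear span --- for instance $e_i^a=(w_{i10}-w_{000})-(w_{i00}-w_{000})$ --- which, together with $\dim Q=3m$, is all your no-unimodularity-correction claim actually requires.
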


\noindent When $m=3$, $\deg(C_3) = 12$ and $\mldeg(C_3) = 7$. The toric ideal is
\begin{small}
\[\langle  p_{100}  p_{111}  p_{201}  p_{210} - p_{101}  p_{110}  p_{200}  p_{211}, \, 
p_{000}  p_{011}  p_{101}  p_{110} - p_{001}  p_{010}  p_{100}  p_{111},\, 
p_{000}  p_{011}  p_{201}  p_{210} - p_{001}  p_{010}  p_{200}  p_{211}  \rangle.
\]
\end{small}
Observe that the toric variety is not a hypersurface, so calculating the principal $A$-determinant requires more work than the previous example.  
When we compute $\Delta_{A_3}$, the $A$-discriminant for the 
whole polytope $Q = \conv(A_3)$, we do {\it not} get a hypersurface: 

\begin{align*} \Delta_A = \langle &c_{100}  c_{111}  c_{201}  c_{210} - c_{101}  c_{110}  c_{200}  c_{211},\\ 
&c_{000}  c_{011}  c_{101}  c_{110} - c_{001}  c_{010}  c_{100}  c_{111},\\ 
&c_{000}  c_{011}  c_{201}  c_{210} - c_{001}  c_{010}  c_{200}  c_{211} \rangle .
\end{align*}
We already see that $c = (1,1, \ldots, 1,1)$ is in $\LocusEAf$ and 
hence we know $\mldeg(C_3^c) < \deg(C_3) = 12$. 
Also comparing the toric variety and the discriminant locus $\nabla_{A_3}$
we see that they are identical. Such a toric variety is known as self-dual. Based on our computations for $m \leq 5$ we state the following
conjecture. 
\begin{conjecture} The no-three-way interaction model 
with one $m$-ary variable and two binary variables is self-dual.
\end{conjecture}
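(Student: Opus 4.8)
The plan is to prove the sharper statement behind self-duality: that the discriminant locus $\nabla_{A_m}$, which is the projective dual of the toric variety $V_{A_m}$, coincides with $V_{A_m}$ itself under the identification $p_{ijk}\leftrightarrow c_{ijk}$ (this is exactly what ``identical to $\nabla_{A_3}$'' meant in the $m=3$ computation). As a first step I would describe $V_{A_m}$ explicitly on its dense torus. Writing the parametrization as $p_{ijk}=\alpha_{ij}\beta_{ik}\gamma_{jk}$, the conditional cross-ratio $p_{i00}p_{i11}/(p_{i01}p_{i10})=\gamma_{00}\gamma_{11}/(\gamma_{01}\gamma_{10})$ is independent of the layer $i$, so $V_{A_m}$ lies in the common zero locus of the binomials
\[
q_{i,i'}=p_{i00}p_{i11}p_{i'01}p_{i'10}-p_{i01}p_{i10}p_{i'00}p_{i'11},\qquad 0\le i<i'\le m-1.
\]
A short lattice argument shows these cut out $V_{A_m}$ exactly on the torus: the exponent vectors of the adjacent relations $q_{i,i+1}$ lie in $\ker_\bZ A_m'$, and inspecting the coordinates indexed by $(0,00),(1,00),\dots,(m-1,00)$ forces $d$ to divide every coefficient of any integer combination that becomes integral after dividing by $d$; hence these $m-1$ vectors generate the saturated lattice $\ker_\bZ A_m'$, and $V(\{q_{i,i'}\})\cap(\bC^*)^n$ is precisely the dense torus of $V_{A_m}$.

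For the inclusion $\nabla_{A_m}\subseteq V_{A_m}$ I would use the tangency description of the discriminant: a generic $c\in\nabla_{A_m}$ lies in the torus and comes with a torus point at which $f=\sum c_{ijk}\theta^{a_{ijk}}$ and all its logarithmic derivatives vanish. Re-expressing that point through the hierarchical parameters $x_{ij},y_{ik},z_{jk}$ is legitimate because the parameter torus surjects onto the dense torus of $V$, so the logarithmic derivatives in $x,y,z$ span the same conditions as the intrinsic $\theta_i\partial_{\theta_i}f$. The vanishing of the derivatives in the $AB$-block parameters then reads $c_{ij0}y_{i0}z_{j0}+c_{ij1}y_{i1}z_{j1}=0$ for all $i,j$; taking the $j=0$ and $j=1$ equations for fixed $i$ and dividing eliminates $y_{i0},y_{i1}$ and gives $c_{i00}c_{i11}/(c_{i01}c_{i10})=z_{01}z_{10}/(z_{00}z_{11})$, a value independent of $i$. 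This is precisely $q_{i,i'}(c)=0$ for every pair, so $c\in V_{A_m}$.

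For the reverse inclusion $V_{A_m}\subseteq\nabla_{A_m}$ I would build an explicit tangency witness. Given a general $p=(\alpha_{ij}\beta_{ik}\gamma_{jk})$ and setting $c=p$, I seek a torus point $x,y,z$ at which $f$ and all its logarithmic derivatives vanish. After the substitution $u_{ij}=\alpha_{ij}x_{ij}$, $v_{ik}=\beta_{ik}y_{ik}$, $w_{jk}=\gamma_{jk}z_{jk}$, the conditions become $\sum_k v_{ik}w_{jk}=0$, $\sum_j u_{ij}w_{jk}=0$, and $\sum_i u_{ij}v_{ik}=0$, with $f=0$ following from the last family. The first two families are solvable exactly when the $2\times2$ matrix $(w_{jk})$ has rank one, say $w_{jk}=\xi_j\eta_k$, which forces $u_{i0}/u_{i1}$ and $v_{i0}/v_{i1}$ to be layer-independent; the rest then collapses to the single bilinear equation $\sum_i u_{i1}v_{i1}=0$, which admits a nonvanishing solution for every $m\ge 2$. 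Unwinding produces a genuine torus tangent point, so $c=p\in\nabla_{A_m}$, and by density of the torus and closedness of $\nabla_{A_m}$ the inclusion follows. Combining the two inclusions yields $V_{A_m}=\nabla_{A_m}$ uniformly in $m$.

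The part I expect to demand the most care is the passage between the intrinsic dual-variety conditions and the computation in the redundant hierarchical coordinates: one must verify that the surjection of the parameter torus onto the dense torus of $V_{A_m}$ makes vanishing of the $x,y,z$-logarithmic derivatives genuinely equivalent to tangency, so that neither inclusion silently imposes the wrong number of conditions, and that a generic point of $\nabla_{A_m}$ really has a tangent point on the torus so that the divisions above are valid. The one creative step is the witness construction in the reverse inclusion, namely recognizing that the dual $YZ$-block $(w_{jk})$ must be taken of rank one; once that is granted, everything reduces to the elementary manipulations sketched above.
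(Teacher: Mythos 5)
You should know at the outset that the paper contains \emph{no proof} of this statement: it is posed as a conjecture, supported only by the explicit $m=3$ computation (where $\nabla_{A_3}$ is observed to be generated by the same three binomials as the toric ideal) and by machine computations for $m\leq 5$. So there is nothing to compare your argument against; it must stand on its own, and having checked it, I believe it does — written out carefully, it would upgrade the conjecture to a theorem. Your step (i) is sound: the exponent vectors of the adjacent relations $q_{i,i+1}$ lie in $\ker_{\bZ}A_m'$, your divisibility induction (the coordinate $(0,00)$ carries coefficient $\alpha_0$ and $(i,00)$ carries $\alpha_i-\alpha_{i-1}$, so $d\mid\alpha_0$ forces $d\mid\alpha_i$ for all $i$) shows the sublattice they span is saturated, and since $\operatorname{rank}A_m'=3m+1$ (equivalently $\operatorname{codim}V_{A_m}=m-1$) they generate the full kernel, so the $q_{i,i'}$ cut out the dense torus of $V_{A_m}$ exactly; do state that rank computation explicitly, or bypass the lattice argument entirely by observing that when all cross-ratios equal a common value, a single rescaling of the $\gamma$-block makes each layer matrix $(p_{ijk})_{jk}$ rank one, which produces the hierarchical factorization directly. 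Your step (ii), the point you rightly flag as delicate, does hold for the reason you give: both the intrinsic conditions $f=\theta_l\partial_{\theta_l}f=0$ and the hierarchical ones assert that the weighted monomial vector $(c_{ijk}x_{ij}y_{ik}z_{jk})$ is orthogonal to the common $\bQ$-row space of the two exponent matrices, and since both kernels are saturated with the same rational span, the two parameter tori have the same image in $(\bC^*)^{4m}$ — this justifies both the elimination in the inclusion $\nabla_{A_m}\subseteq V_{A_m}$ and the lift of a tangency witness needed in the reverse inclusion. Your step (iii) is also correct: the rows of $(v_{ik})$ are nonzero kernel vectors of $(w_{jk})$, so that matrix is singular and, having nonzero entries, factors as $w_{jk}=\xi_j\eta_k$; the third family then collapses to the single equation $\sum_i\mu_i\lambda_i=0$, solvable with all entries nonzero for every $m\geq 2$, and $f=0$ follows from the Euler relation (each monomial of $f$ is linear in each parameter block). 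Taking closures gives $\nabla_{A_m}=V_{A_m}$ for all $m$, which is strictly stronger than the paper's evidence; note also that your inclusion $\nabla_{A_m}\subseteq V_{A_m}$ uses only the $x$-derivative conditions, and the reverse inclusion shows those necessary conditions are in fact sufficient — a clean explanation of the self-duality the authors observed experimentally.
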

To compute the entire principal $A$-determinant, we need to consider
all faces of $Q$. If a face $\Gamma$ has dimension $e$ 
and $|\Gamma \cap A_3| = e+1$, then $\Delta_{\Gamma \cap A_3}$ is
the unit ideal. Moreover, for all codimension one and codimension 
two faces $\Gamma$ which are not simplices, we observe that 
$\nabla_{\Gamma \cap A_3}$ are not hypersurfaces. They also lie 
in a coordinate hyperplane. For instance, for the facet $\Gamma$ 
where the elements of $\Gamma \cap A_3$ correspond to 
\[
\{ p_{001}, p_{011}, p_{100}, p_{101}, p_{110}, p_{111}, p_{200}, p_{201}, p_{210}, p_{211}  \}
\]
the discriminant is defined by 
\[
\langle \, c_{001}, \, c_{011}, \, c_{100}c_{111}c_{201}c_{210} - c_{101}c_{110}c_{200}c_{211} \, \rangle.
\]
Therefore, for $c=(1,1, \ldots, 1,1)$,
these discriminants do not contribute to a drop in the ML degree. 
There is a total of three codimension $3$ faces that are not simplices.
In each case $\Gamma \cap A_3$ is in general position 
and their discriminants are given by Corollary~\ref{cor-hypersurface}. 
We list the indeterminates which correspond to the elements
of $\Gamma \cap A_3$ in each of these three faces:
\begin{align*}
&\{p_{000}, p_{001}, p_{010},p_{011},p_{100},p_{101},p_{110},p_{111}\},\\
&\{p_{000}, p_{001}, p_{010},p_{011},p_{200},p_{201},p_{210},p_{211}\},\\
&\{p_{100}, p_{101}, p_{110},p_{111},p_{200},p_{201},p_{210},p_{211}\}.
\end{align*}
It turns out that each face of codimension $>3$ is a simplex and 
there are no more discriminants contributing to the principal $A$-determinant. 
Based on our computations for $m \leq 5$, we state the following conjecture.
\begin{conjecture} The ML degree of $C_m$ is $2^m -1$.\end{conjecture}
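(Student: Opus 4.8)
The plan is to reduce the likelihood equations of $C_m$ to an explicit system in $m+1$ variables, count its solutions by a multihomogeneous B\'ezout argument in $(\bP^1)^{m+1}$, and then subtract the contribution of the solutions that lie on the excluded locus. First I would exploit the product structure of the no-three-way interaction model. For generic data the critical points of the likelihood function are exactly the points of $V$ whose $XY$, $XZ$, and $YZ$ margins agree with those of the data (the standard sufficient-statistic description of the toric likelihood equations in Proposition~\ref{prop:eqlik3} with $c=(1,\ldots,1,1)$), and $\mldeg(C_m)$ counts those lying off the coordinate hyperplanes. Fixing the $XY$ and $XZ$ margins determines, for each level $i\in\{0,\dots,m-1\}$ of $X$, the row and column sums $R_{i0},R_{i1},C_{i0},C_{i1}$ of the $2\times2$ slice $(p_{i00},p_{i01},p_{i10},p_{i11})$, so that slice is parametrized by the single unknown $x_i=p_{i00}$ via $p_{i01}=R_{i0}-x_i$, $p_{i10}=C_{i0}-x_i$, $p_{i11}=R_{i1}-C_{i0}+x_i$. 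The defining binomials of $V$ assert that the cross-ratio of every slice equals a common value $r$, giving the $m$ quadrics
\begin{equation*}
x_i\,(x_i+R_{i1}-C_{i0})=r\,(R_{i0}-x_i)(C_{i0}-x_i),\qquad i=0,\dots,m-1,\tag{$\ast_i$}
\end{equation*}
while the one genuinely new $YZ$-margin equation reduces to the single linear relation $(\dagger)\colon \sum_{i=0}^{m-1}x_i=\gamma_{00}$, where $\gamma_{00}=p_{+00}$ is read off the data. Thus $\mldeg(C_m)$ is the number of solutions $(x_0,\dots,x_{m-1},r)$ of $(\ast_0),\dots,(\ast_{m-1})$ and $(\dagger)$ with all $p_{ijk}\neq0$.

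Next I would compactify into $(\bP^1)^{m+1}$ with point classes $h_0,\dots,h_{m-1},h_r$. Each $(\ast_i)$ is a divisor of class $2h_i+h_r$, and since its restriction to any fibre over the $r$-line cuts out finitely many points, the intersection $\mathcal{C}=\bigcap_i(\ast_i)$ is a curve with no vertical components, of class $\prod_i(2h_i+h_r)=2^m\,h_0\cdots h_{m-1}+2^{m-1}h_r\sum_{j}\prod_{i\neq j}h_i$ (using $h_r^2=0$). Homogenizing $(\dagger)$ shows it has class $h_0+\cdots+h_{m-1}$, whence $[\mathcal{C}]\cdot[(\dagger)]=m\,2^{m-1}$. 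This equals $\deg(C_m)$ from the preceding proposition, exactly as the B\'ezout bound in Corollary~\ref{cor:mldegIneq} predicts, and it counts every intersection point of $\mathcal{C}$ with $(\dagger)$, including those on the excluded locus.

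It then remains to locate and weigh the excluded points. A short case analysis shows that a finite intersection point can meet a coordinate hyperplane only when $r\in\{0,\infty\}$, which forces a fixed integer-linear relation among the data margins (e.g.\ $\sum_{i\in T}(u_{i+0}-u_{i1+})=u_{+00}$ for some $T$) and hence never occurs for generic $u$; so the only excluded intersection points lie at infinity. Since each slice map $\phi_i$ satisfies $\phi_i(\infty)=1$, these points all lie over $r=1$, and for each subset $S\subseteq\{0,\dots,m-1\}$ there is a single smooth point $P_S$ of $\mathcal{C}$ with $x_i=\infty$ exactly for $i\in S$. Homogenizing $(\dagger)$ shows $P_S$ lies on $(\dagger)$ precisely when $|S|\geq2$.

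The heart of the argument, and the step I expect to be the main obstacle, is the local intersection multiplicity of $\mathcal{C}$ and $(\dagger)$ at each $P_S$. Writing $t_i=1/x_i$ for $i\in S$, the branch of $\mathcal{C}$ through $P_S$ satisfies $t_i\sim\rho/N_i$ as $\rho=r-1\to0$, where $N_i=R_{i0}+R_{i1}$ is the $X=i$ margin; substituting into the local form of $(\dagger)$, whose lowest-degree part is $\sum_{i\in S}\prod_{k\in S\setminus\{i\}}t_k$, yields a leading term proportional to $\rho^{\,|S|-1}\big(\sum_{i\in S}N_i\big)$. Since $\sum_iN_i=1$, this gives intersection multiplicity exactly $|S|-1$ at $P_S$, so the total contribution of the excluded locus is
\begin{equation*}
\sum_{|S|\geq2}(|S|-1)=\sum_{\sigma=2}^{m}\binom{m}{\sigma}(\sigma-1)=m\,2^{m-1}-2^m+1,
\end{equation*}
and the number of genuine solutions is $m\,2^{m-1}-(m\,2^{m-1}-2^m+1)=2^m-1$. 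A final genericity check, that the remaining $2^m-1$ solutions are reduced with all coordinates nonzero, completes the proof. Making the leading-order multiplicity computation and this transversality statement fully rigorous, rather than the formal branch calculation sketched here, is where the real work lies.
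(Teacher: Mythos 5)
There is nothing in the paper to compare your argument against: the statement appears there only as a conjecture, supported by computations for $m\le 5$ (the paper proves the case $m=2$, the binary $3$-cycle, separately via a Gr\"obner basis computation, and records $\mldeg(C_3)=7$ and $\deg(C_m)=m2^{m-1}$). So I have checked your proposal on its own merits, and as far as I can verify it is a correct proof strategy; its consistency checks come out right ($m=2$ gives $4-1=3$ and $m=3$ gives $12-5=7$, matching the paper). The reduction is valid: by Proposition~\ref{prop:eqlik3} and Birch's theorem (Theorem~\ref{thm-Birch}), the critical points are exactly the points of $V$ in the torus whose $XY$-, $XZ$- and $YZ$-margins match the data; the slice parametrization by $x_i=p_{i00}$, the cross-ratio quadrics $(\ast_i)$, and the single residual linear equation $(\dagger)$ are all correct; and the multihomogeneous B\'ezout number $\prod_i(2h_i+h_r)\cdot(h_0+\cdots+h_{m-1})=m\,2^{m-1}$ agrees with $\deg(C_m)$, as it must. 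The boundary analysis is also right: a point of $\mathcal{C}$ with some $x_i=\infty$ forces $r=1$ (setting the relevant homogeneous coordinate to zero in the bihomogenized $(\ast_i)$ leaves $z_i^2(y_r-z_r)$), at $r=1$ each remaining quadric degenerates to the linear equation $N_ix_i=R_{i0}C_{i0}$, and affine points with some $p_{ijk}=0$ force $r\in\{0,\infty\}$ and hence a nongeneric linear relation on the data.

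The step you flag as the main obstacle is on firmer ground than you suggest. The Jacobian of the $m$ equations at $P_S$ with respect to $(t_i)_{i\in S}$ and $(x_i)_{i\notin S}$ is diagonal with entries $N_i\neq 0$, so $\mathcal{C}$ is a smooth curve at $P_S$, honestly (not just formally) parametrized by $\rho=r-1$ with $t_i=\rho/N_i+O(\rho^2)$; pulling the local equation of $(\dagger)$ back along this parametrization gives leading term $\rho^{|S|-1}\bigl(\sum_{i\in S}N_i\bigr)/\prod_{k\in S}N_k$, so the multiplicity is exactly $|S|-1$. (One correction: the nonvanishing you need is $\sum_{i\in S}N_i\neq 0$ for every subset $S$, which holds for positive or generic data; the identity $\sum_i N_i=1$ you cite is not the relevant statement.) Two genuine gaps remain, both fillable. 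First, to use $[\mathcal{C}]=\prod_i[(\ast_i)]$ you need $\bigcap_i(\ast_i)$ to be generically reduced along every component; this follows from your no-vertical-components observation together with smoothness of the scheme at all $2^m$ points over $r=1$ (the same Jacobian computation, including $S=\emptyset$), since every component is finite over $\bP^1_r$ and hence passes through one of them. Second, the transversality at the $2^m-1$ affine solutions cannot be waved off: since the total intersection number is fixed at $m2^{m-1}$, any excess multiplicity there would make the number of distinct critical points \emph{smaller} than $2^m-1$. It does hold, because $\gamma_{00}$ is linearly independent of the margin data $(R,C)$ defining $\mathcal{C}$ (exactly the statement that the $YZ$-margin is not a linear combination of the $XY$- and $XZ$-margins), so one can fix $(R,C)$ generic and apply a Bertini/Sard argument to the pencil $\sum_i x_i=\gamma$ on $\mathcal{C}$. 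With these details written out, your argument settles the paper's conjecture.
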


\subsection{The binary $4$-cycle}
The binary $4$-cycle is the model we have used in Example~\ref{ex:binary-4-cycle}. Let $S, B, H, L$ be four binary random variables and let $X = (S, B, H, L)$ be the joint random variable where we 
set 
$$ p_{ijk\ell} \, \, = \, \, \mathrm{prob}(S = i, B=j, H = k, L =\ell).$$
The family of probability distributions for $X$ which factor according
to the graphical model depicted in Figure~\ref{fig:binary-4-cycle} can be described by the following monomial parametrization: let $a_{ij}, b_{jk}, c_{k\ell}, d_{i \ell}$
be parameters for $i,j,k, \ell \in \left\lbrace 0, 1\right\rbrace$ and let $p_{ijk\ell} = a_{ij}b_{jk}c_{k\ell} d_{i \ell}$. Below is the matrix $A$ arising from 
a different parametrization that gives a full rank matrix. 
The columns are labeled by $p_{0000}, p_{0001}, \ldots, p_{1111}$ in increasing lexicographic ordering.
\[ A = \left( 
\begin{array}{c c c c c c c c c c c c c c c c}
0 & 0 & 0 & 0 & 0 & 0 & 0 & 0 & 1 & 1 & 1 & 1 & 1 & 1 & 1 & 1\\
0 & 0 & 0 & 0 & 1 & 1 & 1 & 1 & 0 & 0 & 0 & 0 & 1 & 1 & 1 & 1\\
0 & 0 & 1 & 1 & 0 & 0 & 1 & 1 & 0 & 0 & 1 & 1 & 0 & 0 & 1 & 1\\
0 & 1 & 0 & 1 & 0 & 1 & 0 & 1 & 0 & 1 & 0 & 1 & 0 & 1 & 0 & 1\\
0 & 0 & 0 & 0 & 0 & 0 & 0 & 0 & 0 & 0 & 0 & 0 & 1 & 1 & 1 & 1\\
0 & 0 & 0 & 0 & 0 & 0 & 1 & 1 & 0 & 0 & 0 & 0 & 0 & 0 & 1 & 1\\
0 & 0 & 0 & 1 & 0 & 0 & 0 & 1 & 0 & 0 & 0 & 1 & 0 & 0 & 0 & 1\\
0 & 0 & 0 & 0 & 0 & 0 & 0 & 0 & 0 & 1 & 0 & 1 & 0 & 1 & 0 & 1
\end{array}
\right)
\]

\noindent
The Zariski closure $V$ of the image of this parametrization is a toric variety  that is defined by the following prime ideal: 
\begin{small}
 \begin{align*}
I        =  \langle \, & p_{1011}p_{1110} - p_{1010}p_{1111}, \,\, p_{0111}p_{1101} - p_{0101}p_{1111}, \,\, p_{1001}p_{1100} - p_{1000}p_{1101}, \,\, p_{0110}p_{1100} - p_{0100}p_{1110}, \\
      &  p_{0011}p_{1001} - p_{0001}p_{1011}, \,\, p_{0011}p_{0110} - p_{0010}p_{0111}, \,\, p_{0001}p_{0100} - p_{0000}p_{0101}, \,\, p_{0010}p_{1000} - p_{0000}p_{1010}, \\
      &  p_{0100}p_{0111}p_{1001}p_{1010} \, - \, p_{0101}p_{0110}p_{1000}p_{1011},  \,\, p_{0010}p_{0101}p_{1011}p_{1100} \, - \, p_{0011}p_{0100}p_{1010}p_{1101},  \\
      &  p_{0001}p_{0110}p_{1010}p_{1101} \, - \, p_{0010}p_{0101}p_{1001}p_{1110},  \,\, p_{0001}p_{0111}p_{1010}p_{1100} \, - \, p_{0011}p_{0101}p_{1000}p_{1110},  \\
      &  p_{0000}p_{0011}p_{1101}p_{1110} \, - \, p_{0001}p_{0010}p_{1100}p_{1111},  \,\, p_{0000}p_{0111}p_{1001}p_{1110} \, - \, p_{0001}p_{0110}p_{1000}p_{1111},  \\
      &  p_{0000}p_{0111}p_{1011}p_{1100} \, - \, p_{0011}p_{0100}p_{1000}p_{1111},  \,\, p_{0000}p_{0110}p_{1011}p_{1101} \, - \, p_{0010}p_{0100}p_{1001}p_{1111} \, \rangle.
\end{align*}
\end{small}

The degree of this toric variety is $64$ and $\mldeg(V) = 13$. This was computed in \cite[p.~1484]{GMS06} where the question of explaining the fact $\mldeg(V) = 13$
was first raised.  
One road to an explanation is to compute the $A$-discriminants
of all the faces of $Q=\conv(A)$, and then determine the contribution
of each for the drop in the ML degree. There are multiple faces 
that might contribute to such a drop. At this moment, we do 
not understand how different discriminants {\it interact}. Also, it is 
not possible to compute every discriminant. For instance, 
with standard elimination methods we were not able to compute 
$\Delta_A$ corresponding to the entire polytope. However, one can 
check that the polynomial $f = \sum_{i}^{16} \theta^{a_i}$ where $c = (1,1,\ldots, 1,1)$ has singularities in $(\bC^*)^{d-1}$. Hence, we conclude that $\Delta_A$ does contribute to the drop from the generic
ML degree.  However, we can compute discriminants for facets and lower codimension faces.

The polytope $Q$ has $24$ facets.  Of these, $6$ correspond to the $0$ ideal and $2$ correspond to toric hypersurfaces of the form $c_{0001}c_{1101} - c_{0110}c_{1100}$.
Of the remaining 16 facets, eight lie on a coordinate hyperplane. 
An example of such a discriminant is 
 \[\langle c_{1101}, c_{0011}, c_{0001}c_{0110}c_{1010}c_{1100} - c_{0010}c_{0101}c_{1000}c_{1110} - c_{0001}c_{0100}c_{1010}c_{1110} + c_{0000}c_{0101}c_{1010}c_{1110} \rangle.\]
There are 5 facets which give rise to hypersurfaces, and most are too long to display.  Of these discriminants, there are three of degree $5$, one of degree $6$, and one of degree $16$. The remaining three facets are neither of the above types.  Two have 5 generators of degree 4 of the form 
\begin{align*}
\langle &c_{0000}c_{0111}c_{1011}c_{1101} - c_{0011}c_{0100}c_{1001}c_{1111} + c_{0001}c_{0100}c_{1011}c_{1111} - c_{0000}c_{0101}c_{1011}c_{1111},\\ &c_{0000}c_{0111}c_{1011}c_{1100} - c_{0011}c_{0100}c_{1000}c_{1111},\\ &c_{0011}c_{0111}c_{1001}c_{1100} - c_{0001}c_{0111}c_{1011}c_{1100} - c_{0011}c_{0111}c_{1000}c_{1101} + c_{0011}c_{0101}c_{1000}c_{1111},\\ &c_{0000}c_{0111}c_{1001}c_{1100} - c_{0000}c_{0111}c_{1000}c_{1101} - c_{0001}c_{0100}c_{1000}c_{1111} + c_{0000}c_{0101}c_{1000}c_{1111},\\ &c_{0011}c_{0100}c_{1001}c_{1100} - c_{0001}c_{0100}c_{1011}c_{1100} + c_{0000}c_{0101}c_{1011}c_{1100} - c_{0011}c_{0100}c_{1000}c_{1101}\rangle,
\end{align*} and the remaining discriminant has $3$ generators of degree $2$,
\[
\langle c_{1001}c_{1100} - c_{1000}c_{1101}, c_{0011}c_{0110} - c_{0010}c_{0111}, c_{0001}c_{0100} - c_{0000}c_{0101}\rangle.
\]

Because there are discriminants of codimension one faces that are not hypersurfaces, we must also analyze the 168 codimension two faces of $Q$. Of these, 86 are trivial and 61 correspond to toric hypersurfaces. Of the remaining 21, there is only a single face whose discriminant does not lie on a coordinate hyperplane, and the discriminant of this face is a hypersurface generated by
\[
c_{0110}c_{1000}c_{1011}c_{1101} + c_{0100}c_{1001}c_{1011}c_{1110} - c_{0100}c_{1001}c_{1010}c_{1111}.
\]



\section{ML Estimate Homotopies}\label{sec:homotopy}

\newcommand{\cStat}{c_{stat}}
\newcommand{\pStat}{\expandafter\hat p_{stat}}
\newcommand{\FStat}{F_{stat}}
\newcommand{\thetaStat}{\expandafter\hat \theta_{stat}}

\newcommand{\cEasy}{c_{easy}}
\newcommand{\pEasy}{\expandafter\hat p_{easy}}
\newcommand{\FEasy}{F_{easy}}

In this section 
we use homotopy continuation to track between ML estimates of different scalings of a given statistical model. 
 Moreover, the endpoints may correspond to different scalings of the model with different ML degrees. 

Consider the case where we have found a particular scaling $\cEasy$ such that the ML degree drops to one, but we really wish to compute the ML estimate for the natural 
statistical model corresponding to the scaling $\cStat$. 
The strategy is to apply a parameter homotopy {\cite{Li89}, \cite{SomWamp89}} between $\cEasy$ and $\cStat$, tracking the unique solution of the former to a solution of the latter. 
We argue that the endpoint is the unique ML estimate in Birch's Theorem \ref{thm-Birch}.

  \begin{example}(Veronese)\label{ex:veronese}
 We illustrate the strategy with a Veronese model. 
Let $$A = \left( \begin{array}{c c c c c c} 1&1&1&1 & 1 & 1 \\ 0&1&2&0 & 1 & 0 \\ 0&0&0&1 & 1 & 2 \end{array}\right),$$ with $\cStat=(1,1,1,1,1,1)$ and $\cEasy=(1,2,1,2,2,1)$. 
We can check (see Section \ref{sec:ver}) that the ML degree corresponding to $\cStat$ is 4, while the ML degree for $\cEasy$ is 1. 

Suppose we observe the data vector $u=(1,3,5,7,9,2)$. 
Computing the unique solution for $\cEasy$ we obtain the ML estimate   
$\hat{\theta}_{easy}=(0.0493, 1.8333, 1.6667)$.  
We track this point with a parameter homotopy \eqref{eq:wroteOutHomotopy} towards $\cStat$, and obtain the
point $\hat{\theta}_{track}=(0.0863, 1.6326, 1.5150)$ with corresponding
$\pStat = (0.09, 0.14, 0.23, 0.13, 0.21, 0.20)$. 
The homotopy we track is
\begin{equation}
\begin{array}{r}
H\left(\theta,t\right) = t  \cdot \left(
\begin{array}{l}
27\theta_1\theta_2^2+54\theta_1\theta_2\theta_3+27\theta_1\theta_3^2+54\theta_1\theta_2+54\theta_1\theta_3+27\theta_1-27 \\
54\theta_1\theta_2^2+54\theta_1\theta_2\theta_3+54\theta_1\theta_2-22 \\
54\theta_1\theta_2\theta_3+54\theta_1\theta_3^2+54\theta_1\theta_3-20
\end{array}
\right) \\ + (1-t) \cdot\left(
\begin{array}{l}
27\theta_1\theta_2^2+27\theta_1\theta_2\theta_3+27\theta_1\theta_3^2+27\theta_1\theta_2+27\theta_1\theta_3+27\theta_1-27 \\
54\theta_1\theta_2^2+27\theta_1\theta_2\theta_3+27\theta_1\theta_2-22 \\
27\theta_1\theta_2\theta_3+54\theta_1\theta_3^2+27\theta_1\theta_3-20
\end{array}
\right)
\end{array}
\end{equation}
Or alternatively, if we define $c(t)=27+27t$, we can write it as
\begin{equation}\label{eq:wroteOutHomotopy}
\begin{array}{ccc}
H\left(\theta,t\right) 
&= & \left\{
\begin{array}{l}
27\theta_1\theta_2^2 + c(t)\theta_1\theta_2\theta_3 + 27\theta_1\theta_3^2 + c(t)\theta_1\theta_2 + c(t)\theta_1\theta_3 + 27\theta_1-27 \\
54\theta_1\theta_2^2 + c(t)\theta_1\theta_2\theta_3 + c(t)\theta_1\theta_2-22 \\
c(t)\theta_1\theta_2\theta_3 + 54\theta_1\theta_3^2 + c(t)\theta_1\theta_3-20
\end{array}\right. .
\end{array}
\end{equation}

To verify this,  we solve the critical equations for $\cStat$ to obtain the four solutions
$$
\begin{array}{rr}
(0.2888, 1.4316, -1.8931), &	 (0.3039, -1.8847, 1.3470),\\ 
  (0.8578, -0.7629, -0.7189), &  (0.0863, 1.6326, 1.5150),
\end{array}
$$
where  $\thetaStat$ is the solution with positive coordinates. 
  Observe that $\hat{\theta}_{track} = \thetaStat$, as desired.
\end{example}

Let $\cEasy$ and $\cStat$ be scalings with positive entries. Let $\FEasy(u,\theta)$ be the difference of the left and right sides of the equations of Definition~\ref{MLequations} with $c$ taken to be $\cEasy$, and similarly for $\FStat(u,\theta)$. We may now state the main result of this section.

\begin{theorem}
Fix a generic data vector $u$ with positive entries. 
Let $\pEasy$ and $\pStat$ be the respective ML estimates for these scaling problems for the data $u$ and consider the homotopy below:
$$
H\left(\theta,t\right) = t\cdot \FEasy(u,\theta) + (1-t)\cdot\FStat(u,\theta).
$$
Let $\gamma$ denote the path of the homotopy whose start point (at $t=1$) corresponds to $\pEasy$.
Then the endpoint of $\gamma$  (at $t=0$) is $\pStat$.
\end{theorem}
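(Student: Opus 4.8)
The plan is to recognize the homotopy $H$ as a one-parameter family of \emph{genuine} likelihood systems, each satisfying the hypotheses of Birch's Theorem, and then to track the accompanying family of positive ML estimates by an implicit-function and uniqueness argument.

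First I would observe that $F_c(u,\theta)$ depends \emph{affinely} on the scaling vector $c$: in each equation of Definition~\ref{MLequations} the quantities $(Au)_i$, $u_+$, and the constant $1$ are independent of $c$, while $f=\sum_i c_i\theta^{a_i}$ and each $\theta_i\,\partial f/\partial\theta_i$ are linear in $c$. Hence, writing $c(t)=t\,\cEasy+(1-t)\,\cStat$, the $c$-independent parts of $\FEasy$ and $\FStat$ coincide and the $c$-linear parts combine, giving
$$
H(\theta,t)=t\,\FEasy(u,\theta)+(1-t)\,\FStat(u,\theta)=F_{c(t)}(u,\theta).
$$
In other words $H(\cdot,t)=0$ is exactly the likelihood system \eqref{likeqn1} for the scaled toric variety $V^{c(t)}$. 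Because $\cEasy$ and $\cStat$ are positive and $t\in[0,1]$, the convex combination $c(t)$ is again positive; this is the decisive point that keeps Birch's Theorem applicable at every $t$.

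By Theorem~\ref{thm-Birch}, for each $t\in[0,1]$ the system $F_{c(t)}(u,\theta)=0$ has a \emph{unique} positive solution $\hat p(t)$, the ML estimate for scaling $c(t)$ and data $u$, with $\hat p(1)=\pEasy$ and $\hat p(0)=\pStat$. To show that $\gamma$ follows this curve, I would pass to logarithmic coordinates $x_k=\log\theta_k$ and eliminate $s$ via $s=1/f$, rewriting the $\theta$-equations of \eqref{likeqn1} as the critical equations $\nabla\Phi_{c(t)}(x)=0$ of the function
$$
\Phi_c(x)=\langle Au,x\rangle-u_+\log\Big(\sum_{i=1}^n c_i\,e^{\langle a_i,x\rangle}\Big).
$$
A direct computation gives $\nabla^2\Phi_c(x)=-u_+\,M$, where $M$ is the covariance matrix of the columns $a_1,\dots,a_n$ of $A$ under the positive distribution $p_i=c_i e^{\langle a_i,x\rangle}/f$. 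Since $p$ is strictly positive and the columns of $A$ augmented by a row of ones span $\bR^{d}$ (the non-degeneracy that makes $V$ a $(d-1)$-dimensional toric variety), $M$ is positive definite, so $\Phi_{c(t)}$ is strictly concave. Noting that $\partial(sf-1)/\partial s=f\neq0$ at a positive point, the Jacobian of the full system \eqref{likeqn1} is nonsingular exactly when this Hessian is; thus every $\hat p(t)$ is a regular zero of $H(\cdot,t)$.

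With regularity along the whole segment established, the conclusion follows formally. Applying the implicit function theorem at each $t_0\in[0,1]$ and invoking the uniqueness of the positive solution from Birch's Theorem shows that $t\mapsto\hat p(t)$ is a smooth curve of regular zeros of $H$ lying entirely in the positive region. This curve is a solution path of the homotopy, and since a continuation path emanating from a regular start point is uniquely determined (it solves the Davidenko equation with an invertible Jacobian throughout), $\gamma$ must coincide with $t\mapsto\hat p(t)$. Evaluating at $t=0$ yields $\gamma(0)=\hat p(0)=\pStat$. \textbf{The main obstacle} is exactly this uniform regularity: one must prevent the tracked path from becoming singular, from escaping to infinity, or from reaching the excluded locus $\{\,p_1\cdots p_n\,L_f(p)=0\,\}$. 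The strict concavity of $\Phi_{c(t)}$, which holds uniformly in $t$ precisely because positivity of the scaling is preserved under convex combinations, is what supplies this and pins the tracked solution to the unique positive ML estimate for all $t$.
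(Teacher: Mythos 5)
Your proof is correct and takes essentially the same approach as the paper's: you identify the homotopy with the likelihood system for the positive convex-combination scaling $c(t)$, invoke Birch's theorem to get a unique positive solution at every $t$, and establish nonsingularity of the Jacobian along that positive path so the tracked path must coincide with the curve of ML estimates. Your concavity argument (Hessian $=-u_+\cdot$ covariance matrix, positive definite by positivity of $p$ and full rank of $A$) is the same linear-algebra fact as the paper's factorization of the Jacobian into $A\,\operatorname{diag}(p_1,\ldots,p_n)\,A^T\,\operatorname{diag}(1/\theta_1,\ldots,1/\theta_d)$, a product of full-rank matrices.
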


\begin{proof}
By Birch's theorem (Theorem~\ref{thm-Birch}), the likelihood equations are given by $A\hat{p} = \frac{1}{u_+}Au$ for a data vector $u$. Let $\hat{p}_{win}$ and $\hat{p}_{stat}$ be the two monomial vectors for $\cEasy$ and $\cStat$ respectively. The homotopy above can be rewritten as $A\cdot(t\hat{p}_{c(t)} +  \frac{1}{u_+}u)$ where $c(t) = {t\cStat + (1-t)\cEasy}$. Since $c(t)$ is positive for any positive real $\cEasy$, $\cStat$ and $t\in[0,1]$, Birch's theorem states that there is exactly one positive real solution to this system at every point along the homotopy path. Thus, as long as no paths intersect, the unique solution for $\cEasy$ will track to the unique positive real solution for $\cStat$. But paths only intersect where the Jacobian of the system drops rank, which we now show cannot occur.

For a value of $\hat p$ at some point on the path, the Jacobian matrix can be written as the product
\[ A \left( \begin{array}{ccc}
\frac{\partial p_1}{\partial \theta_1} & \cdots & \frac{\partial p_1}{\partial \theta_d} \\
\vdots & \ddots & \vdots \\
\frac{\partial p_n}{\partial \theta_1} & \cdots & \frac{\partial p_n}{\partial \theta_d}
\end{array}\right).\]
Because we require the $\theta_i$'s to be nonzero, we can write this as
\[ A \left( \begin{array}{ccc}
\theta_1\frac{\partial p_1}{\partial \theta_1} & \cdots & \theta_d\frac{\partial p_1}{\partial \theta_d} \\
\vdots & \ddots & \vdots \\
\theta_1\frac{\partial p_n}{\partial \theta_1} & \cdots & \theta_d\frac{\partial p_n}{\partial \theta_d}
\end{array}\right)
\operatorname{diag}\left(\frac{1}{\theta_1},\ldots,\frac{1}{\theta_d}\right)
.\]
Note that if we denote $A=(a_{ij})$, then $\theta_i\frac{\partial p_j}{\partial\theta_i}=a_{ij}p_j$, so we may rewrite the middle matrix as 
$\operatorname{diag}(p_1,p_2,\ldots,p_n) A^T$
and the Jacobian thus factors as
\[
A \, \operatorname{diag}(p_1,\ldots,p_n) \,
A^T \, 
\operatorname{diag}\left(\frac{1}{\theta_1},\ldots,\frac{1}{\theta_d}\right)
.\]
This is a product of full rank matrices, hence the Jacobian has full rank.
\end{proof}
Notice that the straight-line homotopy between $F_{easy}$ and $F_{stat}$ is equivalent to a parameter homotopy between $c_{easy}$ and $c_{stat}$; see Example~\ref{ex:veronese}.

\subsection{Timings}
To test the viability of using homotopy methods, we ran timing comparisons with
the iterative proportional scaling algorithm (IPS, Algorithm 2.1.9 in \cite{DSS09}). We
programmed IPS in Python using \texttt{numpy} for fast linear algebra
calculations and a target residual of $\epsilon=10e^{-12}$, while for the
homotopy approach we used Macaulay2 \cite{M2} for setup and PHCpack \cite{Ver99} for the actual path tracking.
We choose the rational normal scroll \eqref{ScrollMatrix} example from Section~\ref{sec:scroll}, setting $n_i=k$ for all $i$ and varying $k\in \{4,\ldots,13\}$ and $d-1 \in \{5,10,15\}$.
For the homotopy start system, we set $c_{ij}=\binom{n_i}{j}$
and tracked from the closed form solution given in Example~\ref{ex:sameN}.
It is clear from Figure~\ref{fig:timings} that as the problem size
grows, the homotopy method has a significant speed advantage over the iterative proportional scaling algorithm.\footnote{Source code can be found at
\href{https://github.com/nbliss/Likelihood-MRC}{https://github.com/nbliss/Likelihood-MRC}}
\begin{figure}
\captionsetup[subfigure]{labelformat=empty}
\centering
\begin{subfigure}{.32\textwidth}
  \centering
  \begin{tikzpicture}[scale=0.50]
\begin{axis}[
xlabel={$n_i$},
ylabel={Time (s)},
ymin=-0.2,ymax=1.5,
scatter/classes={
a5={mark=triangle*,black},
b5={black}
}]
\addplot[scatter,only marks,scatter src=explicit symbolic]  table[meta=label]{data5.dat};
\end{axis}
\end{tikzpicture}
  \caption{$d-1=5$}
\end{subfigure}%
\hspace*{4pt}
\begin{subfigure}{.32\textwidth}
  \centering
  \begin{tikzpicture}[scale=0.50]
\begin{axis}[
xlabel={$n_i$},
ymin=-0.2,ymax=1.5,
ylabel near ticks, 
scatter/classes={
a10={mark=triangle*,black},
b10={black}
}]
\addplot[scatter,only marks,scatter src=explicit symbolic]  table[meta=label]{data10.dat};
\end{axis}
\end{tikzpicture}
  \caption{$d-1=10$}
\end{subfigure}
\hspace*{4pt}
\begin{subfigure}{.32\textwidth}
  \centering
  \begin{tikzpicture}[scale=0.50]
\begin{axis}[
xlabel={$n_i$},
ymin=-0.2,ymax=1.5,
ylabel near ticks, yticklabel pos=right,
scatter/classes={
a15={mark=triangle*,black},
b15={black}
}]
\addplot[scatter,only marks,scatter src=explicit symbolic]  table[meta=label]{data15.dat};
\end{axis}
\end{tikzpicture}
  \caption{$d-1=15$}
\end{subfigure}
\captionsetup{font=scriptsize}
\caption{\label{fig:timings} Running times of iterative proportional scaling (triangles) versus path tracking (circles) on
the rational normal scroll \eqref{ScrollMatrix} for three choices of $d-1$ with $n_i$'s constant. Average of 7 trials.}
\end{figure}
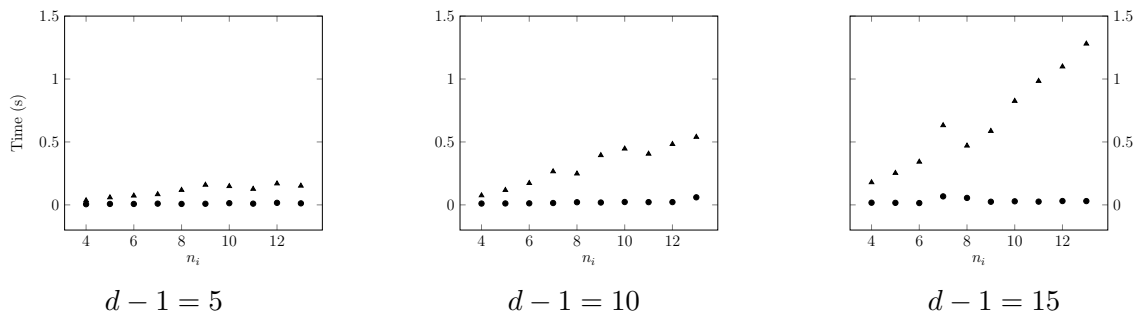

\bibliographystyle{amsalpha}
\bibliography{bibfile}

\end{document}